\newtheorem{theorem}{Theorem}[section]
\newtheorem{lemma}[theorem]{Lemma}
\newtheorem{proposition}[theorem]{Proposition}
\theoremstyle{definition}
\newtheorem{remark}[theorem]{Remark}
\renewcommand{\setminus}{\smallsetminus}
\renewcommand{\emptyset}{\varnothing}
\newcommand{\into}{\hookrightarrow} 
\newcommand{\onto}{\twoheadrightarrow} 
\newcommand{\id}{\mathrm{id}} 
\newcommand{\Ext}{\mathrm{Ext}}
\newcommand{\cExt}{\mathcal{E}xt}
\DeclareMathOperator{\Hom}{Hom} 
\DeclareMathOperator{\Spec}{Spec} 
\DeclareMathOperator{\Proj}{Proj} 
\DeclareMathOperator{\coker}{coker}
\DeclareMathOperator{\Pic}{Pic}
\DeclareMathOperator{\rank}{rank}
\newcommand{\Div}{\mathrm{Div}}
\def\conv#1{\mathrm{conv} \left\{ #1  \right\} } 
\def\cone#1{\mathrm{cone} \left\{ #1 \right\}}
\def\pow#1{[ \! [ #1 ] \! ] }
\newcommand\cO{\mathcal{O}}
\newcommand\cR{\mathcal{R}}
\newcommand\cX{\mathscr{X}}
\renewcommand\AA{\mathbb{A}}
\newcommand\CC{\mathbb{C}}
\newcommand\FF{\mathbb{F}}
\newcommand\GG{\mathbb{G}}
\newcommand\NN{\mathbb{N}}
\newcommand\PP{\mathbb{P}}
\newcommand\QQ{\mathbb{Q}}
\newcommand\RR{\mathbb{R}}
\newcommand\ZZ{\mathbb{Z}}
\newcommand\rH{\mathrm{H}}
\newcommand\rR{\mathrm{R}}
\newcommand\rmm{\mathrm{m}}
\newcommand{\morimukai}[2]{$\mathrm{MM}_{#1 - #2}$} 
\newcommand{\stack}[2]{ \mathcal{M}^\mathrm{Kss}_{#1, #2} } 
\newcommand{\modspace}[2]{\mathrm{M}^\mathrm{Kps}_{#1, #2}} 
\newcommand{\coh}[2]{\mathrm{H}^{#1}(#2, \mathbb{Q})} 
\newcommand\cT{\mathscr{T}}
\newcommand\TT{\mathbb{T}}
\newcommand\cZ{\mathscr{Z}}
\DeclareMathOperator\Def{Def}
\DeclareMathOperator\DefqG{Def^{\mathrm{qG}}}
\DeclareMathOperator\Aut{Aut}
\newcommand{\GL}{\mathrm{GL}}
\newcommand{\TTqG}[2]{\mathbb{T}^{\mathrm{qG}, #1}_{#2}}
\newcommand{\cTqG}[2]{\mathscr{T}^{\mathrm{qG}, #1}_{#2}}
\newcommand{\epsi}{\varepsilon}
\newcommand{\frakX}{\mathfrak{X}}
\newcommand{\LL}{\mathbb{L}}
\DeclareMathOperator{\Sing}{Sing}
\DeclareMathOperator{\spe}{sp}
\title[On toric geometry and K-stability of Fano varieties]{On toric geometry and K-stability \\ of Fano varieties}
\author{Anne-Sophie Kaloghiros}
\address{Department of Mathematics, Brunel University London, Kingston Lane, Uxbridge UB8 3PH, United Kingdom}
\email{anne-sophie.kaloghiros@brunel.ac.uk}
\author{Andrea Petracci}
\address{Institut f\"ur Mathematik, Freie Universit\"at Berlin, Arnimallee 3, Berlin 14195, Germany}
\email{andrea.petracci@fu-berlin.de}
\begin{document}
   \begin{abstract}
We present some applications of the deformation theory of toric Fano varieties to K-(semi/poly)stability of Fano varieties.
First, we present two examples of K-polystable toric Fano $3$-fold with obstructed deformations. In one case, the K-moduli spaces and stacks are reducible near the closed point associated to the toric Fano $3$-fold, while in the other they are non-reduced near the closed point associated to the toric Fano $3$-fold.
Second, we study K-stability of the general members of two deformation families of smooth Fano $3$-folds by building degenerations to K-polystable toric Fano $3$-folds.
\end{abstract}

   \maketitle
   \section{Introduction}
   \label{sec:Intro}
 
In this paper, we present some applications of the deformation theory of toric Fano varieties to K-(semi/poly)stability of Fano varieties.
Working with toric varieties enables us to run many computations explicitly, and to analyse the local structure of some K-moduli spaces and stacks of $3$-dimensional Fano varieties.   

In the first part of the paper, we study two examples of K-polystable toric Fano $3$-folds with obstructed deformations.
These define non-smooth points of both the K-moduli stack of K-semistable Fano $3$-folds and of the K-moduli space of K-polystable Fano $3$-folds. In one case, the K-moduli stack has $4$ branches and the K-moduli space has $3$ branches. In the other case, the K-moduli space is a fat point. These are, to the best of our knowledge, the first examples of such behaviour. 

Second, we establish the K-polystability of the general member of two families of smooth Fano $3$-folds by showing that they arise as smoothings of K-polystable toric Fano $3$-folds explicitly.
In one of these cases, K-semistability was not known, while in the other, our argument provides an alternative proof. 

We now state the main results of the paper and present its organisation.

\subsection{Non-smooth K-moduli}

An immediate consequence of Kodaira--Nakano vanishing is that deformations of smooth Fano varieties are unobstructed.
It follows that moduli stacks of smooth Fano varieties are smooth.
It is also known that $\QQ$-Gorenstein deformations (i.e.\ those satisfying Koll\'ar's condition) of del Pezzo surfaces with cyclic quotient singularities are unobstructed \cite[Proposition~3.1]{hacking_prokhorov} \cite[Lemma~6]{procams}. As in the smooth case, this implies that moduli of del Pezzo surfaces with cyclic quotient singularities are smooth \cite{odaka2016} (see also Proposition~\ref{prop:unobstructed_del_Pezzo}).

In dimension $3$, there are examples of Fano varieties with obstructed deformations and isolated (canonical) singularities \cite{jahnke_radloff, petracci_roma, petracci_survey}. Note however that Fano $3$-folds with terminal singularities have unobstructed deformations \cite{namikawa_fano, sano_fano}. 

In light of recent developments in the moduli theory of Fano varieties, it is natural to ask whether deformations of K-semistable or of K-polystable Fano $3$-folds are obstructed. 
For example, \cite{liu_xu_cubic} have shown that  the K-moduli stack of K-semistable cubic $3$-folds coincides with the GIT stack, and is therefore smooth.

We show that the naive hope that deformations of K-polystable Fano $3$-folds would be unobstructed is not validated. 

In what follows, for $n\geq 1$ and $V\in \QQ$, $\stack{n}{V}$ denotes the moduli stack of $\QQ$-Gorenstein families of K-semistable Fano varieties of dimension $n$ with anticanonical degree $V$ and $\modspace{n}{V}$ denotes its good moduli space (see \S\ref{sec:KstabFa}).

\begin{theorem}
There exists a K-polystable toric Fano $3$-fold $X$ with Gorenstein canonical singularities and anticanonical volume $12$ such that:
\begin{enumerate}
    \item the stack $\stack{3}{12}$ and the algebraic space $\modspace{3}{12}$ are not smooth at the point corresponding to $X$;
    \item for every $n \geq 4$, if $V = 2n(n-1)(n-2)^{n-2}$ then the stack $\stack{n}{V}$ and the algebraic space $\modspace{n}{V}$ are not smooth at the point corresponding to $X \times \PP^{n-3}$.
\end{enumerate}
\end{theorem}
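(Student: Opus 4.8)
The plan is to produce $X$ as the toric Fano $3$-fold $X_\Sigma$ attached to an explicit reflexive polytope $P\subseteq N_\RR\cong\RR^3$: reflexivity of $P$ forces $-K_X$ Cartier and $X$ to have Gorenstein canonical singularities, and $(-K_X)^3=12$ is read off combinatorially from $P$. I would first record the rays of $\Sigma$, the non-smooth cones (the local models of $\Sing X$) and the torus-invariant boundary divisors, since everything afterwards is a finite combinatorial computation. K-polystability of $X$ then follows from the toric criterion (Berman; and because torus-equivariant test configurations suffice for varieties with maximal torus, so that for a toric Fano K-semistability, K-polystability and vanishing of the Futaki invariant are all equivalent): $X$ is K-polystable exactly when the barycentre of the dual polytope $P^*$ is the origin, which one checks on the vertices of $P^*$.

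The core of the argument is to compute the $\QQ$-Gorenstein deformation functor $\DefqG(X)$ and to show that its hull is \emph{singular}, i.e.\ that $X$ has obstructed $\QQ$-Gorenstein deformations. For this I would use toric deformation theory: the space $\TTqG{1}{X}$ of first-order $\QQ$-Gorenstein deformations and the obstruction space $\TTqG{2}{X}$ carry a grading by the character lattice $M$ of the torus, and each graded piece is computed from the combinatorics of $P$ (Altmann-style descriptions of the deformations of the toric singularities of $X$ via Minkowski summands of the relevant lattice polytopes, in the $\QQ$-Gorenstein-equivariant variant). Having written the miniversal $\QQ$-Gorenstein family explicitly, I would compute enough of the Kuranishi obstruction map to exhibit its base as a non-smooth germ at the central point — a union of linear subspaces in the reducible example (so that the branches of $\DefqG(X)$ at the central point, hence of $\stack{3}{12}$ at $[X]$, are counted by these components), or a non-reduced (fat) germ in the other. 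Controlling the obstruction map — in practice its quadratic part, and if necessary a few higher-order terms — is the main obstacle.

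To transfer this to K-moduli I would use the local structure of $\stack{3}{12}$ and $\modspace{3}{12}$ near a K-polystable point, provided by the Luna \'etale slice theorem in the K-moduli setting (Alper--Blum--Halpern-Leistner--Xu, Blum--Liu--Xu): since $X$ is K-polystable, $G:=\Aut(X)$ is reductive; K-semistability is open, so there is a $G$-invariant open $U\subseteq\DefqG(X)$ containing the central point with K-semistable fibres; and an \'etale (equivalently formal) neighbourhood of $[X]$ in $\stack{3}{12}$ is $[U/G]$, with the matching neighbourhood of $[X]$ in $\modspace{3}{12}$ equal to the GIT quotient $U/\!\!/G$. As $U\to[U/G]$ is a $G$-torsor, hence smooth, the stack $\stack{3}{12}$ is smooth at $[X]$ if and only if $U$ — equivalently $\DefqG(X)$ — is smooth at the central point, which fails by the previous step; this gives (1) for the stack. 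For the algebraic space I would determine the $G$-representation on $\TTqG{1}{X}$, describe the GIT quotient of the singular Kuranishi base explicitly, and verify that it is not smooth (reducible, resp.\ non-reduced, in the two examples; in the reducible case the $G$-action on the branches accounts for the counts $4$ upstairs and $3$ downstairs).

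Finally, for (2): $X\times\PP^{n-3}$ is again a Gorenstein canonical toric Fano $n$-fold, of anticanonical volume $\binom{n}{3}(-K_X)^3(n-2)^{n-3}=2n(n-1)(n-2)^{n-2}=V$, and it is K-polystable because a product of K-polystable Fano varieties is K-polystable (for instance $X$ carries a singular K\"ahler--Einstein metric and $\PP^{n-3}$ the Fubini--Study one, so the product metric is K\"ahler--Einstein). Since $\PP^{n-3}$ is smooth and rigid, a K\"unneth argument identifies $\DefqG(X\times\PP^{n-3})$ with $\DefqG(X)$, so $X\times\PP^{n-3}$ inherits obstructed $\QQ$-Gorenstein deformations; applying the slice argument of the previous paragraph verbatim at $[X\times\PP^{n-3}]$ then shows that $\stack{n}{V}$ and $\modspace{n}{V}$ are not smooth there.
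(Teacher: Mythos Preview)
Your proposal is correct and follows essentially the same strategy as the paper: construct an explicit reflexive polytope whose dual has barycentre at the origin (so the toric Fano $X$ is Gorenstein canonical, K-polystable by Berman, with $(-K_X)^3=12$), show via Altmann's local theory and the local-to-global spectral sequence that the hull of $\Def X$ is $\CC\pow{t_1,\dots,t_{24}}/(t_1t_2,t_1t_3,t_4t_5,t_4t_6)$ (hence reducible), then invoke the Luna \'etale slice theorem for K-moduli to conclude non-smoothness of $\stack{3}{12}$ and compute $A^G$ explicitly for the space; for part~(2), the paper likewise proves a K\"unneth/product statement identifying $\DefqG(X\times\PP^{n-3})$ with $\Def X$ and reruns the slice argument. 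One minor clarification: since $X$ is Gorenstein, $\QQ$-Gorenstein deformations coincide with ordinary ones, and the paper does not compute the obstruction map directly but instead shows that $\Def X\to\Def Y_1\times\Def Y_2$ (restriction to neighbourhoods of the two isolated cone-over-$\mathrm{dP}_6$ points) is smooth, importing the obstructions from Altmann's known local picture.
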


The local structure of $\stack{3}{12}$ and of $\modspace{3}{12}$ near $[X]$ are studied in detail in \S\ref{Kmodsp} (see Theorem~\ref{Kmod-ex}); we show that the base of the miniversal deformation (Kuranishi family) of $X$ has $4$ irreducible components, and:
\begin{enumerate}[label=$\bullet$]
    \item one component parametrises deformations of $X$ to a smooth Fano $3$-fold in the deformation family \morimukai{2}{6} (Picard rank $2$, degree $12$ and $h^{1,2} = 9$);
    \item  one component parametrises deformations of $X$ to a smooth Fano $3$-fold in the deformation family \morimukai{3}{1} (Picard rank $3$, degree $12$);
    \item the remaining two components parametrise deformations of $X$ to smooth Fano $3$-folds in the deformation family $V_{12}$ (Picard rank $1$, degree $12$).
\end{enumerate}
This implies that $\stack{3}{12}$ has $4$ branches at $[X]$. Two of these branches are identified when passing to the good moduli space, hence $\modspace{3}{12}$ has $3$ branches at $[X]$.

\medskip

A second example of a K-polystable toric Fano $3$-fold with canonical singularities and obstructed deformations gives:

\begin{theorem}\label{non-red}
There is a connected component of the K-moduli space $\modspace{3}{44/3}$ isomorphic to  $\Spec \Big( \CC[t]/(t^2)\Big)$.\end{theorem}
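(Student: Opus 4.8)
The plan is to produce an explicit K-polystable toric Fano $3$-fold $X$ of anticanonical volume $44/3$ whose $\QQ$-Gorenstein deformation functor $\DefqG_X$ is pro-represented by $\CC[t]/(t^2)$, and then to transfer this computation to the K-moduli space via a local structure theorem. First I would fix a reflexive polytope $P$ (equivalently the corresponding Gorenstein or $\QQ$-Gorenstein toric Fano $3$-fold $X = X_P$) with volume $44/3$; the natural source of candidates is the Kreuzer--Skarke / canonical-Fano database, selected so that $X$ is rigid to first order in a controlled way. Concretely, the combinatorial heart of the argument is the toric description of $T^1_{\mathrm{qG}}$ and $T^2_{\mathrm{qG}}$: the $\QQ$-Gorenstein first-order deformations are computed facet by facet (Altmann's description of deformations of toric singularities / the Mavlyutov--Petracci description for Fano toric varieties), so I would show that $T^{1,\mathrm{qG}}_X$ is $1$-dimensional, with the single deformation direction supported on one non-smooth facet of $P$, and that this first-order deformation is obstructed. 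The obstruction lives in $T^{2,\mathrm{qG}}_X$, and the key calculation is that the primary (quadratic) obstruction of the generator of $T^{1,\mathrm{qG}}_X$ is nonzero; combined with $\dim T^{1,\mathrm{qG}}_X = 1$ this forces $\DefqG_X \cong \Spec \CC[t]/(t^2)$ exactly (a $1$-dimensional tangent space with a nonzero quadratic obstruction leaves no room for higher-order freedom).

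Next I would establish K-polystability of $X$. Since $X$ is toric, by the toric case of the Yau--Tian--Donaldson correspondence (Wang--Zhu, Berman, and in the singular/log setting Berman--Boucksom--Jonsson) it suffices to check that the barycenter of the moment polytope $P^\vee$ (with respect to the canonical polarisation $-K_X$) is the origin — i.e.\ that $X$ is a \emph{symmetric}/barycentric toric Fano, equivalently that the only anticanonical test configurations are product ones. This is a finite rational computation on the chosen polytope; I would record $\mathrm{Aut}(X)^\circ = \TT$ is a torus (no extra automorphisms) so that the K-polystable orbit is a single torus-fixed point and the good moduli space is étale-locally $\DefqG_X /\!\!/ \mathrm{Aut}(X)$.

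Finally I would assemble the global statement. By the local structure of K-moduli (the étale-local description of $\stack{3}{V}$ as a quotient of the $\QQ$-Gorenstein Kuranishi space by $\mathrm{Aut}$, together with openness of K-semistability in $\QQ$-Gorenstein families — this is the Blum--Halpern-Leistner--Liu--Xu / Xu--Zhuang machinery already invoked in \S\ref{sec:KstabFa}), the point $[X]$ has an étale neighbourhood in $\modspace{3}{44/3}$ isomorphic to $(\DefqG_X \times \mathbb{A}^N)/\!\!/ \mathrm{Aut}(X)$ for the appropriate $N$; I must then argue this neighbourhood is actually \emph{all} of a connected component. This is where I expect the real work to lie: I need that \emph{every} $\QQ$-Gorenstein deformation of $X$ stays K-semistable (so the full branch of K-moduli near $[X]$ is accounted for by $\DefqG_X$, not a larger smoothing), and that $X$ admits \emph{no} smoothing and no equisingular deformation — so that the reduced locus is the single point $[X]$ and the connected component does not meet any other family. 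The first-order rigidity from the $T^1$ computation makes the local picture $\Spec\CC[t]/(t^2)$; the global isolation should follow because the nearby K-semistable locus is contained in this Artinian scheme, forcing the whole connected component to be a fat point. The main obstacle, and the step demanding the most care, is precisely the interplay between the deformation-theoretic computation of $T^1_{\mathrm{qG}}$, $T^2_{\mathrm{qG}}$ on the nose (getting constants right so the obstruction is genuinely nonzero and the functor is \emph{not} $\CC[t]/(t^3)$ or smooth of dimension $1$) and the moduli-theoretic input guaranteeing K-semistability is preserved in this obstructed direction so that $\DefqG_X$ really is an open chart of the K-moduli space.
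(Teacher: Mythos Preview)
Your plan has a genuine structural gap in how the non-reduced structure arises. You propose to find a toric Fano $X$ with $\dim \TTqG{1}{X} = 1$ and a nonzero quadratic obstruction, so that $\DefqG X \cong \Spec \CC[t]/(t^2)$ directly, and then pass to the moduli space. But the completion of $\modspace{3}{44/3}$ at $[X]$ is not $\DefqG X$ itself: it is the invariant ring $A^G$ where $A$ is the hull of $\DefqG X$ and $G = \Aut(X)$ contains the torus $T_N$. The torus acts on $\TTqG{1}{X}$ through its $M$-grading, and for a toric singularity the first-order deformation classes sit in \emph{nonzero} degrees $m \in M$. So if $\TTqG{1}{X}$ were $1$-dimensional with generator $t$ of degree $m \neq 0$, then $\CC[t]/(t^2)^{T_N} = \CC$ and the moduli space would be a \emph{reduced} point --- exactly the opposite of what you want. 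Your outline never confronts this, and as written it cannot produce a non-reduced component.

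The paper's mechanism is different and this is the idea you are missing. The chosen $X$ (which is \emph{not} Gorenstein --- it has four $\tfrac{1}{3}(1,1,2)$ points, so the polytope is not reflexive) has \emph{two} isolated Gorenstein singularities, each isomorphic to the cone over the anticanonical $\FF_1 \subset \PP^8$; by Altmann each has hull $\CC[t_i]/(t_i^2)$. The remaining singular strata contribute nothing to $\TTqG{1}{X}$. Hence $\TTqG{1}{X} = \CC^2$ and $A = \CC[t_0,t_1]/(t_0^2,t_1^2)$, with $t_0, t_1$ in \emph{opposite} torus weights $(0,0,\pm 1)$. The torus-invariant subring is then generated by $t = t_0 t_1$, which squares to zero, giving $A^{T_N} = \CC[t]/(t^2)$; the finite part $\Aut(P) \simeq C_2 \times C_2$ acts trivially on this. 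So the non-reducedness on the moduli space is manufactured by the GIT quotient from a pair of obstructed deformations with cancelling weights, not inherited from a single obstructed direction. Finally, your worry about showing this is an entire connected component is misplaced: once the completion at $[X]$ is Artinian, the point is automatically open and closed in $\modspace{3}{44/3}$.
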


In general,  this shows that K-moduli stacks and spaces can be both reducible and non-reduced.

\subsection{K-(semi/ poly)stability of smooth Fano $3$-folds by degenerations to toric varieties} 

Recent works have shown that K-semistability is an open property \cite{BLX, xu_minimizing} (see \S\ref{sec:KstabFa}).
In particular, if a smooth Fano $3$-fold is a general fibre in a $\QQ$-Gorenstein smoothing of a K-polystable Fano $3$-fold, then it is automatically K-semistable.   
In \S\ref{sec:Ksstab}, we construct $\QQ$-Gorenstein smoothings of two K-polystable toric Fano $3$-folds, and conclude that the general member of the deformation of each smoothing is K-semistable.  Using the local structure of K-moduli described in \cite{luna_etale_slice_stacks}, we show:
\begin{theorem}\label{refimproved}
The general member of the deformation family of \morimukai{2}{10} (Picard rank $2$, degree $16$, $h^{1,2}=3$) is K-stable. The general member of the deformation family of \morimukai{4}{3} (Picard rank $4$, degree $28$, $h^{1,2} = 1$) is K-polystable. 
\end{theorem}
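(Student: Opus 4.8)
The plan is to realise each smooth Fano $3$-fold as the general fibre of a $\QQ$-Gorenstein smoothing of an explicit K-polystable toric Fano $3$-fold, and then to run the local structure theory of K-moduli to upgrade the resulting K-semistability to the claimed statements. First, for each of the two families \morimukai{2}{10} and \morimukai{4}{3}, I would exhibit a toric Fano $3$-fold $X$ together with a $\QQ$-Gorenstein one-parameter deformation $\mathcal{X} \to \Spec \CC[\![t]\!]$ whose general fibre lies in the relevant deformation family; this amounts to producing a suitable lattice polytope (or fan) and checking that the combinatorial smoothing criterion (Altmann's description of $\mathrm{T}^1$ of toric Gorenstein singularities via Minkowski decompositions) yields a smoothing to the correct Mori--Mukai family, matching Picard rank, anticanonical degree, and $h^{1,2}$. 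The K-polystability of the toric central fibre $X$ would be established by the combinatorial criterion for K-(poly)stability of toric Fano varieties: the barycentre of the dual polytope is the origin, together with an analysis of the torus action so that $X$ has no nontrivial one-parameter degenerations destabilising it; equivalently one checks that $\Aut(X)$ is reductive and the relevant Futaki-type invariants vanish.

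Next, since K-semistability is an open condition in $\QQ$-Gorenstein families \cite{BLX, xu_minimizing}, the general fibre $Y$ of each smoothing is automatically K-semistable. To pin down whether $Y$ is K-stable or merely K-polystable, I would invoke the étale slice / local structure description of the K-moduli stack and space near $[X]$ from \cite{luna_etale_slice_stacks}: étale-locally around $[X]$, the stack $\stack{3}{V}$ is $[\,\Def^{\mathrm{qG}}(X)^{\mathrm{Kss}} / \Aut(X)\,]$, where $\Def^{\mathrm{qG}}(X)^{\mathrm{Kss}}$ is the K-semistable locus in the $\QQ$-Gorenstein deformation space. I would compute $\Def^{\mathrm{qG}}(X)$ (via $\mathrm{T}^1$ and, if there are obstructions, the quadratic part of the Kuranishi map) and the $\Aut(X)$-action on it. For \morimukai{2}{10}, the goal is to show that the generic point of the smoothing component has \emph{finite} stabiliser, so the good moduli space is generically an honest quotient with finite automorphisms and the general $Y$ is K-stable (K-polystable with finite automorphism group is K-stable). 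For \morimukai{4}{3}, which has $h^{1,2}=1$ and Picard rank $4$, I expect a positive-dimensional generic stabiliser (a torus factor), which forces the general member to be strictly K-polystable rather than K-stable, and this needs to be read off from the weights of the $\Aut(X)$-action on the tangent directions of the smoothing component.

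The main obstacle is the second step of the local analysis: correctly identifying the smoothing component inside the possibly obstructed deformation space $\Def^{\mathrm{qG}}(X)$ and determining the generic stabiliser of the $\Aut(X)$-action along it. Altmann's theory gives $\mathrm{T}^1$ as a sum of pieces indexed by Minkowski summands of the faces, but when deformations are obstructed one must control the Kuranishi obstruction map to know which directions actually sweep out a smoothing and to which Mori--Mukai family the general fibre belongs; matching the numerical invariants $(\rho, (-K)^3, h^{1,2})$ is necessary but I would want a more structural identification (e.g. via a projective model or a known classification input) to be certain. Once the smoothing component and the torus weights on it are in hand, concluding K-stability versus strict K-polystability is formal: a K-semistable Fano with reductive automorphisms lies over a point of $\modspace{3}{V}$, and it is K-polystable; it is K-stable precisely when its automorphism group is finite, which we detect from the stabiliser of a general point of the smoothing component under $\Aut(X)$.
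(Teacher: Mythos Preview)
Your overall strategy---degenerate to a K-polystable toric Fano, invoke openness for K-semistability, then use the Luna slice description to upgrade---is the paper's strategy. But two of your key steps are either misdirected or incomplete.

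First, the ``main obstacle'' you identify (handling obstructed deformations and isolating the smoothing component) is sidestepped entirely: in both cases the paper chooses $X$ with \emph{unobstructed} $\QQ$-Gorenstein deformations. For \morimukai{4}{3} the chosen $X$ has only ordinary double points, so unobstructedness is Namikawa's theorem, and the smoothing is identified by noting that the Milnor fibre of an ODP is $S^3$, so $b_2$ is constant along the family; there is a unique smooth Fano $3$-fold with $\rho=4$ and degree $28$. For \morimukai{2}{10} the paper uses Laurent inversion to realise $X$ as a complete intersection of two divisors in $\vert 2L_2 \vert$ inside a smooth toric $5$-fold, which simultaneously exhibits the smoothing and, via the conormal sequence, gives $\TT^2_X=0$. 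Altmann's Minkowski-summand analysis is not used here.

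Second, and more seriously, your upgrade from K-semistable to K-polystable has a gap: showing that the generic stabiliser is finite does \emph{not} show that the generic orbit is closed, and it is closedness of the orbit in $[\Spec A/G]$ that corresponds to K-polystability. The paper's mechanism (Proposition~\ref{K-ps}) is different and is the crux of the argument: since $P$ is centrally symmetric, the weight polytope of the $T_N$-action on $\TTqG{1}{X}$ is also centrally symmetric, so the origin lies in its relative interior; a standard GIT fact then gives that the \emph{general} point of the (unobstructed, hence linear) deformation space has closed $T_N$-orbit, and since $\Aut(X)$ is a finite extension of $T_N$ (no facet of $P^\circ$ has interior lattice points), the same holds for $\Aut(X)$. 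Only after K-polystability is established does the paper distinguish K-stable from strictly K-polystable, and this is done by citing the known automorphism groups of the smooth members (finite for \morimukai{2}{10}, infinite for \morimukai{4}{3}) rather than by computing stabilisers in the slice.
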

\begin{remark}
 The general member of the deformation family \morimukai{2}{10} is known to be K-polystable by \cite[Section 4.3]{ACC+}, where an example of K-semistable \morimukai{2}{10} with symmetries is constructed. The K-semistability of a general member of family \morimukai{4}{3} was not previously known. It has now been proved that every smooth member of that deformation family is K-polystable \cite[Section 4.6]{ACC+}.
\end{remark}

   \subsection{Notation and conventions}
All varieties, schemes and stacks considered in this paper are defined over $\CC$.
A normal projective variety is \emph{Fano} if its anticanonical divisor is $\QQ$-Cartier and ample.
A \emph{del Pezzo surface} is a $2$-dimensional Fano variety.
We only consider normal toric varieties. 

The symbol $V_k$ denotes the deformation family of smooth Fano $3$-folds of Picard rank $1$, Fano index $1$ and degree $k$.
The symbol \morimukai{\rho}{k} denotes the $k$th entry in the Mori--Mukai list \cite{MM86,MM03} of smooth
Fano $3$-folds of Picard rank $\rho$, with the exception of the case $\rho = 4$,
where we place the $13$th entry in Mori and Mukai's rank-4 list in between
the first and the second elements of that list.
This reordering ensures that, for each $\rho$, the sequence
     \morimukai{\rho}{1}, \morimukai{\rho}{2}, \morimukai{\rho}{3}, ...
is in order of increasing degree.

If $Z$ is a normal scheme of finite type over $\CC$, then $\Omega^1_Z$ denotes the sheaf of K\"ahler differentials of $Z$ over $\CC$. For $i \in \{0,1,2\}$, we write  $\TT^i_Z$ for the $\CC$-vector space
$\Ext^i (\Omega^1_Z, \cO_Z)$, and $\cT^i_Z$ for the coherent $\cO_Z$-module $\cExt^i (\Omega^1_Z, \cO_Z)$.
The $\QQ$-Gorenstein versions of these are described in \S\ref{sec:QGorenstein}.

If $M$ is a topological space and $i \geq 0$ is an integer, then $\coh{i}{M}$ denotes the $i$th singular cohomology group of $M$ with coefficients in $\QQ$ and $b_i(M)$ denotes the $i$th Betti number of $M$, i.e.\ the dimension of $\coh{i}{M}$. The topological Euler--Poincar\'e characteristic of  a topological space $M$ is denoted by $\chi(M)$.

If $Z$ is a scheme of finite type over $\CC$, when considering topological properties we always consider the analytic topology on the set of the $\CC$-points of $Z$.

\subsection*{Acknowledgements}
ASK wishes to thank Ivan Cheltsov and Kento Fujita for several useful conversations on K-stability of $3$-folds.
AP wishes to thank Alessio Corti and Paul Hacking for many conversations during the preparation of \cite{corti_hacking_petracci}, which have been very helpful for this paper; he is grateful to Hendrik S\"u\ss{} for asking him how deformations of toric Fanos are related to K-stability and to Taro Sano for pointing \cite{christophersen_ilten} to him.
Both authors wish to thank Yuji Odaka for fruitful conversations, and the referee for their comments, and in particular for suggesting the stronger statement of Theorem~\ref{refimproved}. 

   \section{Preliminaries}
   \label{sec:Preliminaries}
   
   In this section, we collect some results that will be used throughout the paper. 

\subsection{$\QQ$-Gorenstein deformations} \label{sec:QGorenstein}

An important insight, originally due to Koll\'ar and Shepherd-Barron \cite{kollar_shepherd_barron}, is that one should only consider $\QQ$-Gorenstein deformations when studying the moduli theory of higher dimensional singular varieties.
Roughly speaking, $\QQ$-Gorenstein families are flat families for which the canonical classes of fibres fit together well.  More formally, if $X$ is $\QQ$-Gorenstein, then a $\QQ$-Gorenstein deformation is one that is induced by a deformation of the canonical cover stack of $X$ (see \cite{abramovich_hassett_stable_varieties, moduli_products}).
The canonical cover $\frakX$ is a Deligne--Mumford stack with coarse moduli space $X$ and such that $\frakX\to X$ is an isomorphism over the Gorenstein locus of $X$. If $X$ is Gorenstein, then $\frakX\simeq X$ and any deformation is a $\QQ$-Gorenstein deformation. 
We denote by $\TTqG{i}{X}$ (resp.\ $\cTqG{i}{X}$) the $i$th Ext group (resp.\ the pushforward to $X$ of the Ext sheaf) of the cotangent complex of $\frakX$.
There is a spectral sequence
\[
E^{p,q}_2 = \rH^p (X, \cTqG{q}{X} ) \Longrightarrow \TTqG{p+q}{X}.
\]
As usual, $\TTqG{1}{X}$ is the tangent space and $\TTqG{2}{X}$ is an obstruction space for the $\QQ$-Gorenstein deformation functor $\DefqG X$ of $X$.

\subsection{K-stability of Fano varieties}\label{sec:KstabFa}
 The notion of K-stability was introduced by Tian \cite{tian_KE} in an attempt to characterise the existence of K\"ahler--Einstein metrics on Fano manifolds; it was later reformulated in purely algebraic terms in \cite{donaldson_stability}. We will not define the notions of K-semistability, K-polystability or K-stability here; we refer the reader to the survey \cite{xu_survey} and to the references therein. The notion of K-stability has received significant interest from algebraic geometers in recent years, as it has become clear that it provides the right framework to construct well behaved moduli stacks and spaces for Fano varieties \cite{odaka2016,ABHLX, BLX,li2019}. 

 Many proofs of K-semistability of Fano varieties rely on the following:
 
\begin{theorem}[{\cite{BLX, xu_minimizing}}]
    \label{openness}
    If $\mathcal{X}\to B$ is a $\QQ$-Gorenstein family of Fano varieties, then the locus where the fibre is a K-semistable variety is an open set. 
\end{theorem}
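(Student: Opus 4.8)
The plan is to avoid the definition of K-semistability via test configurations and Donaldson--Futaki invariants altogether and to work instead with its valuative reformulation, following the approach of \cite{BLX} (see also \cite{xu_minimizing}). To a $\QQ$-Fano variety $X$ one attaches the stability threshold $\delta(X) = \inf_E A_X(E)/S_X(E)$, where $E$ runs over all prime divisors over $X$, $A_X(E)$ is the log discrepancy, and $S_X(E) = (-K_X)^{-\dim X}\int_0^\infty \mathrm{vol}(-K_X - tE)\,dt$; by the work of Fujita, Li and Blum--Jonsson, $X$ is K-semistable if and only if $\delta(X) \geq 1$, and the infimum may be computed over quasi-monomial valuations. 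The statement then becomes: for a $\QQ$-Gorenstein family $\mathcal{X} \to B$ of $\QQ$-Fano varieties, the set $\{b \in B : \delta(\mathcal{X}_b) \geq 1\}$ is open.

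I would prove this in two steps. First, I would show that $b \mapsto \delta(\mathcal{X}_b)$ is lower semicontinuous and that the level set $\{b : \delta(\mathcal{X}_b) < 1\}$ is constructible. For a single divisor over the total space, the invariants $A$ and $S$ behave semicontinuously on the fibres; the real issue is to control the infimum over \emph{all} divisors uniformly in $b$. Here I would invoke boundedness: by Birkar's solution of the Borisov--Alexeev--Borisov conjecture together with the theory of bounded complements, the $\QQ$-Fano fibres whose stability threshold is bounded away from zero --- in particular all the K-semistable ones --- of fixed dimension and anticanonical volume form a bounded family, and the valuations that compute, or nearly compute, $\delta$ can be extracted from a bounded family of birational models; Xu's theorem \cite{xu_minimizing} that a minimising valuation is quasi-monomial supplies the regularity of these valuations that the family argument needs. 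Stratifying $B$ and specialising to such valuations over the total space reduces the computation of $\delta$ to a Noetherian setting, from which the semicontinuity and the constructibility follow. Second, I would combine the two: lower semicontinuity forces $\{\delta < 1\}$ to be stable under specialisation (if $b \in \overline{\{b'\}}$ then $\delta(\mathcal{X}_b) \leq \delta(\mathcal{X}_{b'})$), and a constructible subset of a Noetherian scheme that is stable under specialisation is closed; hence its complement $\{\delta \geq 1\}$ is open, which is the assertion.

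The hard part is the uniform control in the first step, and especially the boundary case $\delta = 1$, that is, the fibres that are K-semistable but not K-stable. Lower semicontinuity on its own yields only openness of $\{\delta > c\}$ for $c < 1$; bridging the gap to $\{\delta \geq 1\}$ is precisely what makes the constructibility statement --- and hence the deep boundedness theory for Fano varieties --- unavoidable. By comparison, the semicontinuous behaviour of log discrepancies, of (restricted) volumes, and of $S$-invariants in families is essentially bookkeeping once the relevant parameter spaces of valuations have been cut out.
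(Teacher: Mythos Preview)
The paper does not give its own proof of this theorem: it is quoted from \cite{BLX, xu_minimizing} and used as a black box throughout. There is therefore nothing in the paper to compare your argument against.

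That said, your outline is a fair summary of the strategy in the cited references. The reformulation via the stability threshold $\delta$ and the valuative criterion of Fujita--Li and Blum--Jonsson is exactly how \cite{BLX} proceed, and the two-step scheme ``constructible plus stable under specialisation implies closed'' is the correct logical structure. You are also right that the substantive content lies in the constructibility of $b \mapsto \delta(\mathcal{X}_b)$, and that this ultimately rests on Birkar's boundedness results and the theory of complements; the role of \cite{xu_minimizing} in showing that minimising valuations are quasi-monomial is likewise correctly identified. One small point: your phrase ``Stratifying $B$ and specialising to such valuations over the total space reduces the computation of $\delta$ to a Noetherian setting'' is doing a lot of work and hides the most delicate part of \cite{BLX}, namely the uniform approximation of $\delta$ by invariants coming from a bounded family of log canonical places of bounded complements; if you were writing this up in full you would need to make that step explicit. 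But as a sketch of the literature proof, your proposal is on target.
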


In particular if a K-polystable $\QQ$-Gorenstein toric Fano $3$-fold admits a $\QQ$-Gorenstein smoothing to a Fano $3$-fold in a given deformation family, then the general member of that deformation family is K-semistable. 

For every integer $n \geq 1$ and every rational number $V > 0$, 
let $\stack{n}{V}$ denote the category fibred in groupoids over the category
 of $\CC$-schemes defined as follows: for every $\CC$-scheme $B$, $\stack{n}{V}(B)$ is the  groupoid of $\QQ$-Gorenstein flat proper finitely presented families with base $B$ of 
 K-semistable klt Fano varieties of dimension $n$ and anticanonical volume $V$.

\begin{theorem}[{\cite{ABHLX, xu_minimizing, BLX, jiang_boundedness, blum_xu_uniqueness}}]
$\stack{n}{V}$ is an Artin stack of finite type over $\CC$ and admits a good moduli space $\modspace{n}{V}$, which is a separated algebraic space of finite type over $\CC$.
Moreover, $\modspace{n}{V}(\CC)$ is the set of K-polystable Fano $n$-folds with anticanonical volume $V$.
\end{theorem}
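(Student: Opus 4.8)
The plan is to assemble the statement from the K-stability literature, treating the deepest inputs as black boxes. First I would establish boundedness: by Jiang's theorem \cite{jiang_boundedness} (which rests on Birkar's solution of the BAB conjecture), there is a positive integer $r$, depending only on $n$ and $V$, such that for every K-semistable klt Fano $n$-fold $X$ of anticanonical volume $V$ the divisor $-rK_X$ is very ample with vanishing higher cohomology; fixing the resulting Hilbert polynomial, all such $X$ embed into a common $\PP^N$, and the locally closed subscheme $Z \subseteq \mathrm{Hilb}(\PP^N)$ parametrising these embedded Fanos is of finite type over $\CC$ and carries a natural $\mathrm{PGL}_{N+1}$-action. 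Then I would apply Theorem~\ref{openness} (\cite{BLX, xu_minimizing}) to the universal family over $Z$: the locus $Z^{\mathrm{ss}} \subseteq Z$ where the fibre is K-semistable is open, hence itself of finite type, and one identifies $\stack{n}{V} \simeq [Z^{\mathrm{ss}} / \mathrm{PGL}_{N+1}]$. As a quotient of a finite-type $\CC$-scheme by a linear algebraic group, this is an Artin stack of finite type over $\CC$; its diagonal is affine, the stabilisers being the automorphism group schemes of the fibres, which are affine.

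To produce the good moduli space I would invoke the existence criterion of Alper--Blum--Halpern-Leistner--Xu \cite{ABHLX}: an Artin stack of finite type over $\CC$ with affine diagonal admits a good moduli space, necessarily a separated algebraic space of finite type, as soon as it is $\Theta$-reductive and S-complete and its closed points have linearly reductive stabilisers. For $\stack{n}{V}$ these hypotheses have concrete meaning in K-stability. Via the valuative criteria, $\Theta$-reductivity and S-completeness together encode the existence and uniqueness of the K-polystable degeneration of a K-semistable Fano over a discrete valuation ring, the uniqueness being the theorem of Blum--Xu \cite{blum_xu_uniqueness} (building on Li--Wang--Xu); the reductivity of $\Aut(X)$ for K-polystable $X$ is also part of \cite{ABHLX}, together with \cite{xu_minimizing}. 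Separatedness of $\modspace{n}{V}$ is precisely the content of S-completeness.

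Finally I would identify the $\CC$-points of $\modspace{n}{V}$. The good moduli space morphism $\stack{n}{V} \to \modspace{n}{V}$ identifies two $\CC$-points of the source exactly when their closures meet; the closed points of $\stack{n}{V}$ are exactly the K-polystable Fanos, since these are the ones whose $\mathrm{PGL}_{N+1}$-orbit in $Z^{\mathrm{ss}}$ is closed, and by \cite{blum_xu_uniqueness} each S-equivalence class of K-semistable Fanos of volume $V$ contains a unique K-polystable member; hence $\modspace{n}{V}(\CC)$ is in bijection with the set of isomorphism classes of K-polystable Fano $n$-folds of anticanonical volume $V$. I expect the genuine difficulty to lie entirely in the inputs this packaging black-boxes --- boundedness, openness of K-semistability, reductivity of automorphism groups, and uniqueness of K-polystable degenerations --- all of which depend on the minimisation theory of the normalised volume function and on delicate valuative estimates, and I would not attempt to reprove any of them here.
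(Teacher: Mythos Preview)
The paper does not give its own proof of this theorem: it is stated as a result quoted from the literature, with the bracketed citations \cite{ABHLX, xu_minimizing, BLX, jiang_boundedness, blum_xu_uniqueness} serving as the entire justification. Your proposal is a correct and well-organised unpacking of precisely how those five references combine to yield the statement --- boundedness from \cite{jiang_boundedness}, openness of K-semistability from \cite{BLX, xu_minimizing}, the existence criterion for good moduli spaces and reductivity of automorphisms from \cite{ABHLX}, and uniqueness of K-polystable limits from \cite{blum_xu_uniqueness} --- so there is nothing to compare against and nothing to correct.
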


The notion of good moduli space is defined in \cite{alper_good_moduli_spaces}. The stack $\stack{n}{V}$ is called the K-moduli stack, and the algebraic space $\modspace{n}{V}$ is called the K-moduli space.

\medskip

Now we describe the local structure of K-moduli explicitly.
Let $X$ be a K-polystable Fano variety of dimension $n$ and anticanonical degree $V$.
Let $A$ be the noetherian complete local $\CC$-algebra with residue field $\CC$ which is the hull of the functor of $\QQ$-Gorenstein deformations of $X$, i.e.\ the formal spectrum of $A$ is the base of the miniversal $\QQ$-Gorenstein deformation of $X$.
Let $G$ be the automorphism group of $X$; then $G$ is reductive by \cite{ABHLX}.
The group $G$ acts on $A$ and the Luna \'etale slice theorem for algebraic stacks \cite{luna_etale_slice_stacks} gives in this case a cartesian square
\[
\xymatrix{
    [\Spec A \ / \ G] \ar[d] \ar[r] & \stack{n}{V} \ar[d] \\
    \Spec A^G \ar[r] & \modspace{n}{V}
}
\]
where the horizontal arrows are formally \'etale and map the closed point into the point corresponding to $X$.

In the case of del Pezzo surfaces, this description of K-moduli implies the following properties of the moduli stack and space. 

\begin{proposition} \label{prop:unobstructed_del_Pezzo}
For every $V \in \QQ_{>0}$, the stack $\stack{2}{V}$ is smooth and the moduli space $\modspace{2}{V}$ is normal, Cohen--Macaulay, with rational singularities.
\end{proposition}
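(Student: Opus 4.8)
The plan is to reduce everything to the Luna étale slice description of K-moduli recalled just above, together with the unobstructedness of $\QQ$-Gorenstein deformations of del Pezzo surfaces with cyclic quotient (klt) singularities. First I would recall that a K-polystable del Pezzo surface $X$ has klt singularities, and that by \cite{hacking_prokhorov,procams} (see also the discussion preceding the statement) the $\QQ$-Gorenstein deformation functor $\DefqG X$ is unobstructed; equivalently, the hull $A$ is a formal power series ring $\CC\pow{x_1,\dots,x_N}$ for $N = \dim \TTqG{1}{X}$. Hence $\Spec A$ is smooth. Since $G = \Aut(X)$ is reductive (by \cite{ABHLX}), the quotient stack $[\Spec A/G]$ is a smooth Artin stack, and the upper horizontal arrow of the cartesian square, being formally étale, shows that $\stack{2}{V}$ is smooth at $[X]$. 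As $X$ ranges over all K-polystable del Pezzo surfaces of degree $V$, this gives smoothness of $\stack{2}{V}$ everywhere.

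For the moduli space, I would use the lower horizontal arrow $\Spec A^G \to \modspace{2}{V}$, which is formally étale and surjective onto a neighbourhood of $[X]$; so it suffices to prove that $A^G$ is normal, Cohen--Macaulay, with rational singularities. Now $A$ is a regular (hence normal, Cohen--Macaulay, and with rational singularities) local ring carrying an action of the reductive group $G$. The statement then follows from invariant theory: the ring of invariants of a reductive group acting on a polynomial/formal power series ring is Cohen--Macaulay by the Hochster--Roberts theorem, it is normal because $A$ is a normal domain and $A^G$ is integrally closed in $A$ (invariants of a normal domain under a group action form a normal domain), and it has rational singularities by Boutot's theorem, which says that if $A$ has rational singularities and $G$ is reductive then $A^G$ has rational singularities. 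Strictly one wants these in the complete local setting; this is harmless since these properties can be checked after completion (or one applies the graded/local versions of Hochster--Roberts and Boutot directly), and $\modspace{2}{V}$ being of finite type, normality/Cohen--Macaulayness/rational singularities are local properties that transfer along the formally étale map.

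The only point that requires a little care — and the step I would flag as the main obstacle — is making sure the input "deformations of del Pezzo surfaces with klt singularities are $\QQ$-Gorenstein unobstructed" is applied in the correct generality. The references \cite{hacking_prokhorov,procams} are stated for del Pezzo surfaces with \emph{cyclic} quotient singularities; K-polystable del Pezzo surfaces a priori could have non-cyclic quotient (klt) singularities, so one should either invoke the analogous unobstructedness for arbitrary quotient surface singularities (which holds because for a quotient singularity the canonical cover stack has quotient singularities of dimension $2$, whose $\QQ$-Gorenstein $\cTqG{2}$ vanishes and whose higher cohomology of $\cTqG{1}$ vanishes by the local-to-global spectral sequence together with an Akizuki--Nakano-type vanishing on the cover), or restrict the argument accordingly. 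Once that is in place, the rest is a formal consequence of the slice theorem and the classical invariant-theoretic facts (Hochster--Roberts, Boutot), with no further computation needed.
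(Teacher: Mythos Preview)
Your proposal is correct and follows essentially the same route as the paper: unobstructedness of $\QQ$-Gorenstein deformations via \cite{hacking_prokhorov,procams} makes the hull $A$ a power series ring (whence smoothness of the stack), and then normality of $A$ together with Hochster--Roberts and Boutot applied to the reductive invariants $A^G$ yield the stated properties of the moduli space. The caveat you raise about non-cyclic quotient singularities is a fair point of care, but the paper's proof invokes exactly the same references without further comment.
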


\begin{proof}
Let $X$ be a K-polystable del Pezzo surface with $K^2_X = V$ and let $A$ be the hull of the functor of $\QQ$-Gorenstein deformations of $X$.
By \cite[Proposition~3.1]{hacking_prokhorov} or \cite[Lemma~6]{procams} $A$ is a power series $\CC$-algebra in finitely many variables; this implies that $\stack{2}{V}$ is smooth at $[X]$.

Let $G$ denote the automorphism group of $X$.
Since $A$ is normal, $A^G$ is normal.
From the reductivity of $G$ it follows that $A^G$ is Cohen--Macaulay by \cite{hochster_roberts}
 and that it has rational singularities by \cite{boutot}.
\end{proof}

\begin{remark} The proof of Proposition~\ref{prop:unobstructed_del_Pezzo} shows that $\stack{n}{V}$ is smooth and $\modspace{n}{V}$ is normal, Cohen Macaulay and has rational singularities in a neighbourhood of every K-polystable Fano variety with unobstructed $\QQ$-Gorenstein deformations (in particular, in the neighbourhood of every K-polystable smooth Fano variety).\end{remark}

   \subsection{Topology of smoothings}\label{sec:topo_smooth}
We recall the formalism of vanishing and nearby cycles and show how they relate the topology of the central fibre to that of a smoothing.  

Let $\cZ$ be a complex analytic space and $f \colon \cZ \to \Delta$ a flat projective morphism to a complex disc.
Denote by $Z$ the central fibre $ f^{-1}(0)$ and set $\cZ_t=  f^{-1}(t)$ for $t\neq 0$. There is a diagram of spaces and maps: 
\[
\xymatrix{
Z \ar@{^{(}->}[r]^i \ar[d]  & \cZ \ar[d]_f && TZ\smallsetminus Z \ar[d]  \ar@{_{(}->}[ll]_j  & \widetilde{TZ\smallsetminus Z} \ar[l]_{p} \ar[d] \\
\{0\} \ar@{^{(}->}[r]^{i_0} &\Delta && \Delta_e^*  \ar@{_{(}->}[ll]_{j_0} & \widetilde{ \Delta_e^*} \ar[l]_{p_0}
}
\]
where $\Delta_e= \{ z\in \CC : |z|<e\}$ is chosen so that $f$ restricts to a topologically trivial fibration $f^{-1}(\Delta^*_e) \to \Delta^*_e$, where $\Delta_e^* = \Delta_e \setminus \{0\}$. Let $TZ= f^{-1}\Delta_e$ be the tube about the fibre $Z$, $p_0\colon \widetilde{\Delta_e^*} \to \Delta_e^*$ the universal covering map, and let $p$ be the map making the right hand square of the diagram Cartesian. 
The nearby cycle complex of $f$ with $\QQ$-coefficients is:  
\begin{equation*} \psi_f \underline \QQ_{\cZ}^\bullet= i^*\Big(R(j\circ p)_*\circ (j\circ p)^*\Big)  \underline \QQ_{\cZ}^\bullet, \end{equation*}
where $\underline \QQ_{\cZ}^\bullet$ denotes the constant sheaf treated as a complex in degree $0$.  

As $f$ is projective, for any $t\in \Delta_e^*$, 
 there is a specialization map
\[  \spe \colon \cZ_t\stackrel{i_t}\hookrightarrow  f^{-1}(\Delta_e)\simeq Z\] and the vanishing cycle complex is \[\psi_f \underline \QQ_{\cZ}^\bullet \simeq R\spe_* \big( \underline \QQ_{\cZ_t}^\bullet\big).\]


For any $x\in Z$, there is a natural isomorphism $\mathcal{H}^k(\psi_f \underline \QQ^\bullet_{\cZ})_x\simeq \coh{k}{F_x}$, where $F_x$ is the Milnor fibre of the function $f$ at $x$ \cite[Prop. 4.2.2]{Dimca}. Further, as $f$ is projective,  the hypercohomology of $\psi_f  \underline \QQ^\bullet_{\cZ}$ is, for all $k\in \ZZ$:
\[ \mathbb H^k(Z, \psi_f  \underline \QQ_{\cZ}^\bullet) \simeq 
\coh{k}{\cZ_t}.\]

The vanishing cycles complex of $f$ with $\QQ$-coefficients is obtained by considering the distinguished triangle 
\begin{equation}\label{coneq} i^* \underline \QQ_{\cZ}^\bullet \to \psi_f \underline \QQ_{\cZ}^\bullet\to \varphi_f \underline \QQ^\bullet_{\cZ} \stackrel{[1]}\rightarrow \end{equation}
associated to the cone of the specialisation map $i^* \underline \QQ_{\cZ}^\bullet\to \psi_f  \underline \QQ^\bullet_{\cZ}$. By definition, for all $x\in Z$
\[ \mathcal H^k(\varphi_f  \underline \QQ^\bullet_{\cZ})_x \simeq \widetilde{\rH}^k(F_x , \QQ).\]

The long exact sequence of hypercohomology associated to \eqref{coneq} is:
\begin{equation} \label{les}
\cdots \to \coh{k}{Z} \to \coh{k}{\cZ_t}  \to \mathbb{H}^k(Z, \varphi_f \underline \QQ^\bullet_{\cZ})\to \cdots\end{equation}
and since the fibres of $f$ are compact, we have:
\begin{equation} \label{topology_smoothing}
 \chi(\cZ_t) = \chi(Z)+ \chi(\mathbb H^\bullet(Z, \varphi_f \underline \QQ^\bullet_{\cZ})).\end{equation}

   \subsection{Toric varieties}\label{sec:tv} Toric varieties are a useful source of examples because they have an explicit combinatorial description; a comprehensive reference is \cite{cox_toric_varieties}. Here, we recall some of the terms and notions we will use. 
Given a lattice $N$ and a fan $\Sigma$ in $N$, one constructs the toric variety $X_\Sigma$, a normal variety endowed with an action of the torus $T_N = N \otimes_\ZZ \CC^* = \Spec \CC[M]$, where $M$ is the dual lattice $\Hom_\ZZ(N, \ZZ)$. 
The dimension of $X_\Sigma$ is the rank of $N$, and 
there is a $1$-to-$1$ correspondence between $i$-dimensional cones of $\Sigma$ and $(n-i)$-dimensional torus orbits in $X_\Sigma$, where $n = \dim X_\Sigma$.

Many geometric properties of $X_\Sigma$ can be read off from the fan $\Sigma$; for instance $X_\Sigma$ is complete if and only if $\Sigma$ covers all $N$. The next lemma shows how to read the Betti numbers of a complete toric $3$-fold $X_\Sigma$ off the fan $\Sigma$. 

\begin{proposition}[{Jordan \cite{jordan_phd}}] \label{prop:jordan}
    Let $\Sigma$ be a complete fan in a 3-dimensional lattice $N$ and let $X$ be the associated toric $3$-fold. For every $i=1,2,3$, let $d_i$ be the number of $i$-dimensional cones in $\Sigma$.
    Then: 
    \begin{gather*}
        d_1 - d_2 + d_3 = 2 \\
        b_0(X) = b_6(X) = 1, \\
        b_1(X) = b_5(X) = 0, \\
        b_2(X) = \rank \Pic(X), \\
        b_3(X) = \rank \Pic(X) - d_2 + 2 d_1 - 3, \\
        b_4(X) = d_1 - 3, \\
        \chi(X) = d_3.
    \end{gather*}
\end{proposition}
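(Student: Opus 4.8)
The plan is to combine two standard techniques: the Euler relation for the CW-like decomposition of $X$ into torus orbits, and the computation of cohomology of complete toric varieties via the long exact sequences coming from the orbit stratification (or, equivalently, via Poincaré–Lefschetz duality for the Borel–Moore homology of the open stratum). First I would set $d_0 = 1$ for the unique $0$-dimensional cone (the origin), so that the cones of $\Sigma$ of dimensions $0,1,2,3$ correspond to torus orbits of dimensions $3,2,1,0$ respectively. The identity $d_1 - d_2 + d_3 = 2$ is then exactly the statement that the Euler characteristic of the $2$-sphere (the link of the origin, triangulated by the fan) is $2$: since $\Sigma$ is complete, the rays, $2$-cones and $3$-cones cut out a CW structure on $S^2$ with $d_1$ vertices, $d_2$ edges and $d_3$ faces. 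Equivalently, one gets it from $\chi(X) = d_3$ together with the parity constraints below. The vanishing $b_1 = b_5 = 0$ and $b_0 = b_6 = 1$, as well as $b_2 = \rank \Pic(X)$, are general facts about complete toric varieties: odd cohomology in low degree vanishes because $X$ is simply connected with a paving by affine spaces after subdivision, and $H^2$ is generated by the torus-invariant divisors modulo the relations from $M$, giving $\rank \Pic(X)$ (using that $X$ is complete so $\Pic$ has the expected rank $d_1 - 3$ minus the number of relations, but more simply quoting the standard exact sequence $0 \to M \to \ZZ^{d_1} \to \Pic(X) \to 0$ when $X$ is smooth, and its analogue / the divisor class group computation in general). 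Here I would just cite \cite{cox_toric_varieties} for these.

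The substantive step is computing $b_3(X)$ and $b_4(X)$. I would stratify $X = \coprod_{\sigma \in \Sigma} O(\sigma)$ by torus orbits, where $O(\sigma) \cong (\CC^*)^{3 - \dim \sigma}$, and run the spectral sequence (or iterated long exact sequences of the pair) for Borel–Moore homology / compactly supported cohomology. Since each stratum is a torus $(\CC^*)^k$ with $H^i_c((\CC^*)^k) $ concentrated in degrees $i = k, \dots, 2k$ with ranks $\binom{k}{i-k}$, the $E_1$ page of the weight spectral sequence has entries only in a restricted range. For a $3$-fold: the open orbit $(\CC^*)^3$ contributes in degrees $3,4,5,6$ with ranks $1,3,3,1$; the $d_1$ two-dimensional orbits $(\CC^*)^2$ contribute in degrees $2,3,4$ with ranks $1,2,1$ each; the $d_2$ one-dimensional orbits $\CC^*$ contribute in degrees $1,2$ with ranks $1,1$ each; and the $d_3$ fixed points contribute in degree $0$ with rank $1$ each. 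Because $X$ is complete, $H^i_c = H^i$, and the spectral sequence degenerates at $E_2$ (purity: toric varieties have a pure weight filtration, all weights being of Hodge–Tate type and in fact the weight equals the cohomological degree on complete toric varieties — the only differentials respecting weights that can be nonzero land in the appropriate spots). Collecting contributions degree by degree then yields: $b_0 = d_3 - (\text{rank of one differential})$, and chasing through, $b_4 = d_1 - 3$ and $b_3 = d_2$ corrected by the differentials $\partial\colon (\text{degree-3 part of } (\CC^*)^2\text{-strata}) \to (\text{degree-3 part of open orbit})$ etc. Carrying the bookkeeping through — using the known values $b_0 = 1$, $b_2 = \rank\Pic(X)$, $b_6 = 1$, $\chi(X) = d_3$, and the combinatorial identity $d_1 - d_2 + d_3 = 2$ — pins down $b_4 = d_1 - 3$ and then forces $b_3 = \rank\Pic(X) - d_2 + 2 d_1 - 3$ from $\chi(X) = \sum_i (-1)^i b_i = d_3$.

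Concretely, I would organise it as: (i) establish $d_1 - d_2 + d_3 = 2$ and $\chi(X) = d_3$ from the orbit decomposition and $\chi((\CC^*)^k) = 0$ for $k \geq 1$; (ii) quote $b_0 = b_6 = 1$, $b_1 = b_5 = 0$, $b_2 = \rank\Pic(X)$ from \cite{cox_toric_varieties}; (iii) by Poincaré duality ($X$ is complete but possibly singular, so I would instead use that $X$ has quotient singularities after no assumption — actually $X$ need not be $\QQ$-factorial, so Poincaré duality on $H^*$ may fail, and this is the main obstacle: I must work with intersection cohomology or directly with the compactly-supported spectral sequence rather than invoking $b_k = b_{6-k}$). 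So the honest route is step (iii'): compute $b_3$ and $b_4$ directly from the $E_1 \Rightarrow E_\infty$ spectral sequence above, identifying the single relevant differential by its rank using the already-known Euler characteristic and $b_2$. The main obstacle is precisely handling the general (non-simplicial, non-smooth) complete fan: one cannot assume $X$ is rationally smooth, so the clean Poincaré-duality shortcut is unavailable and one must verify the degeneration/purity of the weight spectral sequence for arbitrary complete toric $3$-folds (this is classical — it follows from the fact that complete toric varieties have cohomology of Hodge–Tate type with weight filtration equal to the degree filtration, see \cite{cox_toric_varieties}) and then do the linear-algebra bookkeeping of the ranks. Given that, the formulas for $b_3(X)$ and $b_4(X)$ drop out, and I expect Jordan's thesis \cite{jordan_phd} proceeds along essentially these lines.
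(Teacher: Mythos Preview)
Your strategy matches the paper's, though the paper's own argument is deliberately a two-line sketch: the identity $d_1 - d_2 + d_3 = 2$ comes from the $S^2$ polyhedral complex exactly as you say, the equality $b_2 = \rank\Pic(X)$ is attributed to the first Chern class isomorphism \cite[Theorem~12.3.2]{cox_toric_varieties}, and everything else is deferred to \cite[\S12.3]{cox_toric_varieties} and \cite[Proposition~3.5.3]{jordan_phd}. Your orbit-stratification spectral sequence is precisely the machinery those references unpack.

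One correction to your write-up: your stated reason for $E_2$-degeneration is wrong. You assert that complete toric varieties have \emph{pure} cohomology with weight equal to the cohomological degree, but this fails exactly in the non-simplicial case at hand --- indeed the proposition itself exhibits the failure, since whenever $b_3(X) \neq 0$ (for instance the $3$-fold of \S\ref{sec:polytopesQPand3foldX} has $b_3 = 4$) the group $\rH^3(X;\QQ)$ is nonzero and of Hodge--Tate type, hence supports only \emph{even} weights and cannot be pure of weight $3$. The correct reason the spectral sequence degenerates at $E_2$ is that its differentials are morphisms of mixed Hodge structures and the weights on the $E_2$-page are arranged so that $d_r$ vanishes for $r \geq 2$ (Deligne); purity plays no role. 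Your rank bookkeeping on the $E_1$ and $E_2$ pages is unaffected and goes through once you replace the purity claim by this.
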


\begin{proof}[Sketch of proof]
    The positive dimensional cones of $\Sigma$ induce a polyhedral complex with support  $\bigcup_{\sigma \in \Sigma(3)} \conv{\rho \mid \rho \in \sigma(1)}$, which is homeomorphic to the 2-dimensional sphere. This implies the first equality.
    
    The equality $b_2(X)= \rank \Pic(X)$ follows from the fact that the first Chern class is an isomorphism from $\Pic(X)= \rH^1(X, \cO_X^*)$ to $\rH^2(X, \ZZ)$ \cite[Theorem~12.3.2]{cox_toric_varieties}.
    All other equalities come from the study of singular cohomology of toric varieties \cite[\S12.3]{cox_toric_varieties}; see \cite[Proposition~3.5.3]{jordan_phd}.
\end{proof}

We briefly recall the construction of polarised projective toric varieties \cite[Chapters 4 and 6]{cox_toric_varieties} (see also \cite[Lemma~2.3]{petracci_mavlyutov}).
Let $N$ be a lattice, $M$ its dual, and denote by $\langle \cdot, \cdot \rangle \colon M \times N \to \ZZ$ the duality pairing.
Let $Q$ be a full dimensional rational polytope in $M_\RR := M \otimes_\ZZ \RR$.
The \emph{normal fan} of $Q$ is the fan in $N$ whose maximal cones are
\[
\{x \in N_\RR \ \mid \ \forall q \in Q, \langle q -v, x \rangle \geq 0 \}
\]
where $v$ is a vertex of $Q$.
Denote by $\PP(Q)$ the toric variety associated to the normal fan of $Q$; $\PP(Q)$ is projective.
There is a $1$-to-$1$ correspondence between $i$-dimensional faces of $Q$ and $i$-dimensional torus orbits of $\PP(Q)$.

One may construct an ample $\QQ$-Cartier $\QQ$-divisor $D_Q\subset \PP(Q)$ supported on the torus-invariant prime divisors, with the property that for every integer $r \geq 0$, there is a canonical bijection between the lattice points of $rQ$ (the factor $r$ dilation of $Q$) and the monomial basis of $\rH^0(\PP(Q), r D_Q)$. 
Therefore
\[
\PP(Q) = \Proj \CC[ \cone{Q \times \{1\}} \cap (M \oplus \ZZ)]
\]
where the $\NN$-grading is given by the projection $M \oplus \ZZ \onto \ZZ$.
In symplectic geometry $Q$ is the moment polytope of the polarisation of $\PP(Q)$ induced by $D_Q$.
The divisor $D_Q$ is Cartier precisely when the vertices of $Q$ are lattice points.

\medskip

Now we present the combinatorial avatars of toric Fano varieties; more details on these notions can be found in \cite[\S8.3]{cox_toric_varieties} and \cite{kasprzyk_nill_fano_polytopes}. 

If $\Sigma$ is a fan in $N$, then we denote by $\Sigma(1)$ the set of the primitive generators of the rays (i.e.\ $1$-dimensional cones) of $\Sigma$.
If $\Sigma$ is a fan in $N$, then $X_\Sigma$ is Fano if and only if $\Sigma(1)$ is the set of vertices of a polytope in $N$.
If this is the case and this polytope is denoted by $P$, then we say that $\Sigma$ is the \emph{face fan} (or \emph{spanning fan}) of $P$. Such a polytope is a \emph{Fano polytope}, i.e.\ $P \subset N_\RR$ is a full dimensional lattice polytope, the origin lies in the interior of $P$ and every vertex of $P$ is a primitive lattice vector. In fact, there is a $1$-to-$1$ correspondence between Fano polytopes and toric Fano varieties.

If $P\subset N_\RR$ is a Fano polytope in the lattice $N$, its \emph{polar} is the rational polytope
\[
P^\circ := \{m \in M_\RR \mid \forall p \in P, \langle m, p \rangle \geq -1 \}\subset M_\RR. 
\]
The normal fan of $P^\circ$ is the face fan of $P$ and the divisor $D_{P^\circ}$ associated to $P^\circ$ is the toric boundary, i.e.\ the sum of the torus invariant prime divisors, and is an anticanonical divisor.

The polytope $P$ is called \emph{reflexive} if the vertices of $P^\circ$ are lattice points. We see that the toric Fano variety associated to the face fan of the Fano polytope $P$ is Gorenstein if and only if $P$ is reflexive.

\medskip

The K-polystability of Gorenstein toric Fano varieties is easy to check combinatorially. 

\begin{theorem}[{\cite[Corollary~1.2]{berman_polystability}}] \label{toricKps}
Let $P$ be a reflexive Fano polytope and let $X$ be the toric Fano variety associated to the face fan of $P$.
Then $X$ is K-polystable if and only if $0$ is the barycentre of the polar polytope $P^\circ$.
\end{theorem}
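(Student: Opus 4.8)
The plan is to recast the statement in terms of the combinatorics of $Q := P^\circ \subset M_\RR$, the moment polytope of $-K_X$; recall from \S\ref{sec:tv} that $(-K_X)^n = n!\operatorname{Vol}(Q)$ and that for $r \geq 1$ the lattice points of $rQ$ index a monomial basis $\{\chi^m\}$ of $\rH^0(X, -rK_X)$. Let $b := \operatorname{Vol}(Q)^{-1}\int_Q m\,dm$ be the barycentre of $Q$, and let $\phi_P \colon N_\RR \to \RR$ be the support function of $-K_X$: the function linear on each cone of the face fan of $P$ with value $1$ at each vertex of $P$, equivalently $\phi_P(v) = -\min_{m \in Q}\langle m, v\rangle$. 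For a primitive $v \in N$ the associated toric divisorial valuation $\operatorname{ord}_v$ has log discrepancy $A_X(\operatorname{ord}_v) = \phi_P(v)$, and $\chi^m \in \rH^0(X,-rK_X)$ vanishes along $\operatorname{ord}_v$ to order $\langle m,v\rangle + r\phi_P(v) \geq 0$; hence $\operatorname{vol}(-K_X - s\operatorname{ord}_v) = n!\operatorname{Vol}\{m \in Q : \langle m,v\rangle \geq s - \phi_P(v)\}$. Recalling the stability threshold $S_X(\operatorname{ord}_v) = (-K_X)^{-n}\int_0^\infty \operatorname{vol}(-K_X - s\operatorname{ord}_v)\,ds$, integration in $s$ together with a change in the order of integration gives
\[
S_X(\operatorname{ord}_v) = \frac{1}{\operatorname{Vol}(Q)}\int_Q\big(\phi_P(v)+\langle m,v\rangle\big)\,dm = \phi_P(v) + \langle b,v\rangle ,
\]
so $\beta_X(\operatorname{ord}_v) := A_X(\operatorname{ord}_v) - S_X(\operatorname{ord}_v) = -\langle b,v\rangle$ and $A_X(\operatorname{ord}_v)/S_X(\operatorname{ord}_v) = \phi_P(v)/(\phi_P(v)+\langle b,v\rangle)$.

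Next I would prove that K-polystability forces $b = 0$. A K-polystable Fano is K-semistable, hence $\delta(X) \geq 1$ by the valuative criterion of Fujita and Li. Since $\Aut(X) \supseteq T$, the delta invariant is computed $T$-equivariantly, and on a toric variety this amounts to the infimum over toric valuations (Blum--Jonsson); thus $1 \leq \delta(X) = \inf_{0 \neq v \in N}\phi_P(v)/(\phi_P(v)+\langle b,v\rangle)$. If $b \neq 0$ one picks $v \in N$ with $\langle b,v\rangle > 0$ and obtains a ratio $< 1$, a contradiction. Equivalently, $\beta_X(\operatorname{ord}_v) = -\langle b,v\rangle \geq 0$ for every $v \in N$ forces $b = 0$, since $N$ spans $N_\RR$.

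For the converse, assume $b = 0$. Then $A_X(\operatorname{ord}_v)/S_X(\operatorname{ord}_v) \equiv 1$ on toric valuations, so $\delta(X) = 1$ and $X$ is K-semistable; it remains to rule out destabilisation by non-product test configurations. Here I would invoke K\"ahler geometry: vanishing of the barycentre of the moment polytope of $-K_X$ guarantees that $X$ carries a (weak) K\"ahler--Einstein metric --- this is Wang--Zhu in the smooth toric case, extended to $\QQ$-Fano toric varieties by Berman--Berndtsson --- and a $\QQ$-Fano variety admitting a K\"ahler--Einstein metric is K-polystable by the main theorem of \cite{berman_polystability}. An algebraic alternative: by equivariance it suffices to exclude a non-product special test configuration with vanishing Donaldson--Futaki (equivalently Ding) invariant; such a test configuration is toric, hence encoded by a rational convex piecewise-linear function $g$ on $Q$, and the Donaldson-type formula for its invariant is a difference of integrals of $g$ which, because $b = 0$, is strictly positive unless $g$ is affine, i.e.\ unless the test configuration is a product.

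The crux is this last point. The K-semistability statement and the implication that K-polystability forces $b = 0$ reduce, once $\beta_X(\operatorname{ord}_v)$ has been computed, to a formal application of the valuative criterion; but separating K-polystability from mere K-semistability in the boundary case $b = 0$ needs genuine extra input --- either the existence of a K\"ahler--Einstein metric, or a convexity estimate showing that no non-affine piecewise-linear function on $Q$ gives a degeneration with vanishing invariant. The other technical ingredient to handle with care is the passage to $T$-equivariant, and then toric, valuations and test configurations, which is precisely what reduces the problem to the combinatorics of $P$ and $P^\circ$.
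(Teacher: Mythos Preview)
The paper does not prove this theorem: it is quoted verbatim as \cite[Corollary~1.2]{berman_polystability} and used as a black box, followed only by a remark that the reflexivity hypothesis can be dropped. There is therefore no proof in the paper to compare your argument against.

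Your sketch is nonetheless a faithful reconstruction of how the result is obtained in the literature. The computation $\beta_X(\operatorname{ord}_v) = -\langle b, v\rangle$ is correct, and together with the reduction to toric valuations it yields both the implication K-polystable $\Rightarrow b = 0$ and K-semistability when $b = 0$. You are also right that the substantive step is the upgrade from K-semistable to K-polystable when $b = 0$, and that this requires either the existence of a weak K\"ahler--Einstein metric plus Berman's theorem, or a direct convexity argument on toric test configurations; the former is precisely what \cite{berman_polystability} does. One point to tighten if you pursue the algebraic route: the reduction to \emph{toric} (not merely $T$-equivariant) special test configurations is itself a nontrivial input and should be cited rather than asserted.
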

\begin{remark} Note that the same criterion holds without the reflexive assumption. This can be shown by taking the polarisation given by the multiple $-rK$, for $r$ the Gorenstein index, instead of $-K$.  
\end{remark}
Lastly, as we are interested in K-moduli and hence in automorphism groups, we mention a result determining the automorphism group of a toric Fano variety:

\begin{proposition} \label{prop:automorphism_toric_fano}
    Let $P\subset N$ be a Fano polytope and $\Aut(P)$ the finite subgroup of $\GL(N)$ consisting of the automorphisms of $P$. Let $X$ be the toric Fano variety associated to the face fan of $P$.
    
    If no facet of the polar of $P$ has interior lattice points, then $\Aut(X)$ is the semidirect product of the torus $T_N = N \otimes_\ZZ \CC^*$ with $\Aut(P)$.
\end{proposition}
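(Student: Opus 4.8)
The plan is to recognise the hypothesis as a purely combinatorial way of saying that the face fan $\Sigma$ of $P$ has \emph{no Demazure roots}, to deduce $\Aut^0(X) = T_N$ from this, and then to carry out the standard analysis of the component group of $\Aut(X)$ when $\Aut^0(X)$ is its own maximal torus. Recall that a Demazure root of a complete fan $\Sigma$ in $N$ is an element $m \in M$ for which there is a ray $\rho \in \Sigma(1)$ with $\langle m, \rho\rangle = -1$ and $\langle m, \rho'\rangle \geq 0$ for every other ray $\rho' \in \Sigma(1)$. By Demazure's description of the global vector fields on a complete toric variety (see \cite{cox_toric_varieties} and the references therein), $\dim_\CC \TT^0_X = \rank N + \#\{\text{Demazure roots of }\Sigma\}$; since $\TT^0_X$ is the tangent space at the identity of $\Aut(X)$ and $T_N \subseteq \Aut^0(X)$, we get $\rank N \le \dim \Aut^0(X) \le \dim_\CC \TT^0_X$, so the absence of roots forces $\Aut^0(X) = T_N$.

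The first task is the combinatorial translation. As $X$ is Fano, $\Sigma(1)$ is exactly the set of vertices of $P$, so a Demazure root is an $m \in M$ with $\langle m, v\rangle = -1$ for a single vertex $v$ of $P$ and $\langle m, v'\rangle \ge 0$ for every other vertex $v'$. The first condition forces $m \in P^\circ$ and places $m$ on the facet $F_v := \{y \in P^\circ \mid \langle y, v\rangle = -1\}$ of $P^\circ$ (a facet, by the inclusion-reversing duality between the faces of $P$ and those of $P^\circ$), while the other conditions say that $m$ lies on no further facet of $P^\circ$, i.e.\ in the relative interior of $F_v$. Conversely, if a lattice point $m$ lies in the relative interior of a facet $F_v$, then $\langle m, v\rangle = -1$ while $\langle m, v'\rangle > -1$ for each other vertex $v'$; as $m$ and $v'$ are lattice points, $\langle m, v'\rangle \in \ZZ$, hence $\langle m, v'\rangle \ge 0$, so $m$ is a root. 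Thus the Demazure roots of $\Sigma$ are precisely the lattice points in the relative interiors of facets of $P^\circ$, and the hypothesis is equivalent to the absence of roots, hence to $\Aut^0(X) = T_N$.

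It remains to identify $\Aut(X)/T_N$. For any $\sigma \in \Aut(X)$ the conjugate $\sigma T_N \sigma^{-1}$ is a connected subgroup of dimension $\rank N$, hence lies in $\Aut^0(X) = T_N$ and equals it; so every automorphism normalises $T_N$, i.e.\ $\Aut(X) = N_{\Aut(X)}(T_N)$. The centraliser of $T_N$ in $\Aut(X)$ is $T_N$ itself: an automorphism commuting with all of $T_N$ restricts on the dense orbit $T_N \subset X$ to a $T_N$-equivariant automorphism of the trivial $T_N$-torsor, hence to translation by some $t_0 \in T_N$, and then $t_0^{-1}\sigma$ fixes the dense torus pointwise and is the identity. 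Conjugation therefore yields an injection $\Aut(X)/T_N \hookrightarrow \Aut(T_N) = \GL(N)$ whose image, by the standard description of the normaliser of $T_N$ in $\Aut(X_\Sigma)$ (see \cite{cox_toric_varieties}), is the group $\{\varphi \in \GL(N) \mid \varphi(\Sigma) = \Sigma\}$ of lattice automorphisms preserving $\Sigma$; the sequence splits, each such $\varphi$ inducing functorially an automorphism of $X = X_\Sigma$. Finally, since $\Sigma$ is the face fan of $P$, a lattice automorphism preserves $\Sigma$ if and only if it permutes the rays $\Sigma(1) = \mathrm{vert}(P)$, equivalently if and only if it preserves $P$, the convex hull of those rays; hence this group is $\Aut(P)$, and $\Aut(X) = T_N \rtimes \Aut(P)$, where $\Aut(P) \subset \GL(N)$ acts on $T_N = N \otimes_\ZZ \CC^*$ in the natural way.

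The main obstacle is the middle step: one must check that the lattice-theoretic condition on facets of $P^\circ$ captures exactly the absence of Demazure roots. The key point is the integrality argument promoting $\langle m, v'\rangle > -1$ to $\langle m, v'\rangle \ge 0$, which is what makes the equivalence hold even when $P$ is not reflexive and $P^\circ$ is merely a rational polytope. Granting Demazure's formula for $\dim_\CC \TT^0_X$ and the classical description of the normaliser of the big torus inside $\Aut(X_\Sigma)$ — both standard in toric geometry — the remainder is a routine comparison of linear algebraic groups.
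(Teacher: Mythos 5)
Your proof is correct and takes essentially the same route as the paper: recognise the hypothesis as the absence of Demazure roots, deduce $\Aut^0(X)=T_N$, and then identify the component group with the lattice automorphisms of the face fan, i.e.\ $\Aut(P)$. The only difference is that you spell out the material the paper delegates to references (the integrality argument showing Demazure roots are exactly lattice points in the relative interiors of facets of $P^\circ$, and the normaliser/centraliser analysis producing the splitting $\Aut(X)=T_N\rtimes\Aut(P)$), which makes your write-up more self-contained but not substantively different.
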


The semidirect product structure is given by the embedding $\Aut(P) \into \GL(N)$.

\begin{proof}
    Let $\Sigma$ be the face fan of $P$.
    An element $m \in M$ is said to be
    a \emph{Demazure root} of $\Sigma$
     if there exists $v \in \Sigma(1)$ such that
    \begin{enumerate}[label=$\bullet$]
        \item $\langle m, v \rangle = -1$,
        \item $\forall v' \in \Sigma(1) \setminus \{v\}, \ \langle m, v' \rangle \geq 0$.
    \end{enumerate}
    Let $\cR$ denote the set of Demazure roots of $\Sigma$; then $\cR$ controls the difference between the torus $T_N$ and the connected component of the identity in the automorphism group $\Aut(X)$ of $X$ \cite{cox_jag, bruns_polytopal} (see also \cite[\S3]{nill_reductive}).
    
    Our assumption on $P^\circ$ forces $\cR = \emptyset$, hence $\Aut(X)$ is the semidirect product of the torus $T_N = N \otimes_\ZZ \CC^*$ with the automorphism group of the fan $\Sigma$, which in our case coincides with $\Aut(P)$.
\end{proof}

\section{An obstructed K-polystable toric Fano $3$-fold}
\label{sec:obstructed_toric_Kpolystable}

In this section, we construct a K-polystable toric Fano $3$-fold $X$ with Gorenstein canonical singularities that admits $3$ different smoothings. More precisely, we show:

\begin{theorem} \label{thm:example}
Let $X$ be the toric variety associated to the normal fan of the polytope $Q$ defined in \S\ref{sec:polytopesQPand3foldX}. Then:
    \begin{enumerate} [label=(\roman*)]
        \item $X$ is a Gorenstein canonical Fano $3$-fold of degree $({-}K_X)^3=12$;
        \item $X$ is K-polystable;
        \item the miniversal deformation base space of $X$ is \[\CC \pow{t_1,\dots, t_{24}} / (t_1 t_2, t_1 t_3, t_4 t_5, t_4 t_6),\]
    in particular, it has $4$ irreducible components;
        \item $X$ admits smoothings to $3$ distinct Fano $3$-folds of degree $12$; more precisely:
        \begin{enumerate}
            \item on the $22$-dimensional component $(t_1=t_4=0)$,  $X$ deforms to a Picard rank $2$ Fano in the family \morimukai{2}{6}, i.e.~to a $(2,2)$-divisor on $\PP^2\times \PP^2$, or to a double cover of a $(1,1)$-divisor on $\PP^2\times \PP^2$ branched along an anticanonical section. 
            \item on each of the $21$-dimensional components $(t_1=t_5=t_6=0)$ and $(t_2=t_3=t_4=0)$, $X$ deforms to a prime Fano $3$-fold of genus $7$ $V_{12}\subset \PP^8$. 
            \item on the $20$-dimensional component $(t_2=t_3=t_5=t_6=0)$ $X$ deforms to a Picard rank $3$ Fano in the family \morimukai{3}{1}, i.e.~to a double cover of $\PP^1 \times \PP^1 \times \PP^1$ branched over an anticanonical surface. 
         \end{enumerate}
    \end{enumerate}
\end{theorem}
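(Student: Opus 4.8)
The plan is to construct everything from the polytope $Q$ (equivalently the reflexive polytope $P$ whose polar is built from $Q$), and to extract the four claimed assertions from explicit combinatorics plus the standard deformation-theoretic machinery for toric Fano threefolds.

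First, for (i), once $Q$ is written down in \S\ref{sec:polytopesQPand3foldX} one checks directly that $Q$ (or rather the relevant Fano polytope $P$) is reflexive, so $X = \PP(Q)$ is a Gorenstein toric Fano threefold, and that the singularities of $X$ are canonical — this can be read off the face fan of $P$, verifying that every cone is canonical (the only lattice points of each cone at ``height $\leq 1$'' are the primitive ray generators and the origin). The degree $(-K_X)^3 = 12$ is then simply the normalised volume of $P^\circ$ (equivalently $3! \cdot \mathrm{vol}(P^\circ) = 12$), a finite computation. For (ii), by Theorem~\ref{toricKps} it suffices to verify that the barycentre of $P^\circ = \overline{Q}$ (the polytope $Q$ recentred so $0$ is the origin, matching the paper's conventions) is the origin; this is an explicit centroid computation over the vertices/triangulation of $P^\circ$, and one expects the symmetry group $\Aut(P)$ to be large enough to force the barycentre to $0$ essentially for free.

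The substantive part is (iii). I would compute the $\QQ$-Gorenstein (here ordinary, since $X$ is Gorenstein) deformation theory of $X$ using the Altmann–Mavlyutov-style description of $\TT^1$ and $\TT^2$ of a Gorenstein toric Fano in terms of the polytope $P^\circ$: the first-order deformations are graded by $M$, and in each degree are described by combinatorial data of the corresponding slice of $\partial P^\circ$ (Minkowski-type decompositions of edge-lattice-length data / the spaces $V(R)$ attached to the ``Minkowski summand'' complex). Concretely I would identify which lattice points $m \in M$ contribute to $\TT^1$, compute $\dim \TT^1 = 24$, and then determine the obstruction map. The cleanest route to the stated equation $\CC\pow{t_1,\dots,t_{24}}/(t_1t_2, t_1t_3, t_4t_5, t_4t_6)$ is to recognise that the interesting obstructed part comes from two ``two-dimensional'' local pieces of $X$ — each a cone over a del Pezzo-type or a degenerate-Segre configuration whose versal base is itself a union of linear subspaces (the classic $\CC\pow{t_1,t_2,t_3}/(t_1t_2,t_1t_3)$, i.e.\ the cone over a chain, or the $\mathbb{A}^2$-type singularity with three-branch base) — together with unobstructed directions contributing a smooth factor. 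One then invokes the fact (via Christophersen–Ilten-type results, or direct use of the homogeneous pieces of $\TT^2$ and the cup-product obstruction) that the global versal base of $X$ is the product of these local versal bases with an affine space; multiplying $\CC\pow{t_1,t_2,t_3}/(t_1t_2,t_1t_3)$, $\CC\pow{t_4,t_5,t_6}/(t_4t_5,t_4t_6)$ and $\CC\pow{t_7,\dots,t_{24}}$ yields exactly the asserted ring, hence $4$ irreducible components. The main obstacle is precisely this step: one must control $\TT^2_X$ in every relevant $M$-degree and show the quadratic part of the obstruction map already cuts out the whole base (no higher-order corrections), which requires either a careful degree-by-degree analysis à la Altmann or an appeal to a structural smoothing-component result — this is where the bulk of the computation lives.

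Finally, for (iv), I would identify each of the four components geometrically. Restricting the versal family to a generic point of each component gives a flat family whose general fibre is smooth (one checks smoothness by verifying the deformation smooths out every singular cone, e.g.\ by exhibiting the relevant Minkowski decomposition), and then one pins down the deformation family of the smooth fibre by computing its invariants: the Picard rank $\rho$ (which on each component can be read from how many rays of $\Sigma$ get identified/removed, or from the semicontinuity $b_2$ together with Proposition~\ref{prop:jordan} applied to $X$), the anticanonical degree (constant $=12$ in the family), and, where needed to separate families of the same $(\rho,\mathrm{deg})$, the Hodge number $h^{1,2}$ of the smooth fibre — computed via the topology-of-smoothings formula \eqref{topology_smoothing}, i.e.\ $\chi(\cZ_t) = \chi(X) + \chi(\mathbb{H}^\bullet(X,\varphi_f\underline\QQ^\bullet))$, with $\chi(X) = d_3$ from Proposition~\ref{prop:jordan} and the vanishing-cycle contribution localised at the singular points of $X$ (sums of reduced Milnor-fibre Euler characteristics of the smoothed cone singularities). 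Matching $(\rho,\mathrm{deg},h^{1,2})$ against the Mori–Mukai / $V_{12}$ classification then forces the families \morimukai{2}{6}, $V_{12}$ (twice, on the two $21$-dimensional components), and \morimukai{3}{1}; the explicit descriptions (a $(2,2)$-divisor or double cover on $\PP^2\times\PP^2$; a genus-$7$ prime Fano $V_{12}\subset\PP^8$; a double cover of $(\PP^1)^3$) follow by exhibiting the smoothing concretely — e.g.\ deforming the toric equations of $X$ inside the ambient toric/projective space — which is a direct, if lengthy, unprojection-style computation.
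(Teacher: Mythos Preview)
Your outline for (i) and (ii) matches the paper exactly. For (iii) your structural picture is correct --- two obstructed local pieces each contributing $\CC\pow{s_1,s_2,s_3}/(s_1 s_2, s_1 s_3)$, times a smooth $18$-dimensional factor --- but you have not identified what those local pieces are, and your worry about ``higher-order corrections'' signals that you are missing the organising lemma. In the paper the two obstructed singularities are the isolated points $q_1,q_2$, each isomorphic to the cone over the anticanonical $S_6\subset\PP^6$; Altmann computed their miniversal base long ago. The remaining singular locus is not isolated: it is a cycle $\Gamma$ of six rational curves of transverse $A_1$ type (with embedded structure at the six nodes). The paper computes $\rH^0(U,\cT^1_U)=\CC^{18}$ and $\rH^1(U,\cT^1_U)=0$ on the complement $U=X\setminus\{q_1,q_2\}$ by a graded \v{C}ech calculation, and then uses the toric-Fano vanishing $\rH^{\geq 1}(\cT^0_X)=0$ to conclude that $\TT^1_X=\CC^{18}\oplus\TT^1_{Y_1}\oplus\TT^1_{Y_2}$ and $\TT^2_X=\TT^2_{Y_1}\oplus\TT^2_{Y_2}$. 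This makes $\Def X\to\Def Y_1\times\Def Y_2$ \emph{smooth} of relative dimension $18$, so the hull of $\Def X$ is literally the product of Altmann's two local hulls with a power-series ring --- no analysis of a global obstruction map is needed.

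For (iv) there is a genuine gap. First, your proposal to read off the Picard rank of the smooth fibre ``from how many rays get identified'' or from semicontinuity of $b_2$ does not work here: the paper computes $b_2(X)=1$, yet the smoothings have Picard ranks $1$, $2$, and $3$, so $b_2$ jumps upward and gives no direct information. Second, and more seriously, you treat the vanishing-cycle contribution as a sum over isolated ``cone singularities'', but the curve $\Gamma$ contributes as well, and its contribution is the unknown that must be determined. The paper's argument runs entirely through Euler characteristics: one computes $\chi(X)=8$, the Milnor fibres $M_1,M_2$ of the two smoothings of the $S_6$-cone have $\widetilde\chi=\pm 1$ (Proposition~\ref{prop:milnor_fibres_smoothings_hexagon}), so the three smoothings $A,B,C$ satisfy $\chi(B)=\chi(A)+2$ and $\chi(C)=\chi(A)+4$; this leaves two possibilities $\{-12,-10,-8\}$ or $\{-10,-8,-6\}$ after matching against the list of degree-$12$ smooth Fanos. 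The decisive step is a symmetry/constructibility argument: the six components of $\Gamma$ are permuted by the $D_6$-symmetry, so the Euler characteristic of $\mathbb{H}^\bullet$ of the vanishing-cycle complex restricted to $U$ is divisible by $6$, forcing it to be $-18$ rather than $-16$. This pins down $\chi(A)=-12$, $\chi(B)=-10$, $\chi(C)=-8$, hence \morimukai{2}{6}, $V_{12}$, \morimukai{3}{1}. Your proposal contains neither the curve contribution nor the divisibility trick, and without them the identification in (iv) cannot be completed.
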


\begin{remark}
	Using the Macaulay2 package \cite{ilten_package_article}, Christophersen and Ilten \cite{christophersen_ilten} compute the local structure of the Hilbert scheme of the toric $3$-fold $X$ embedded in $\PP^8$ and they show that the point corresponding to $X$ lies at the intersection of the loci of $3$ different families of smooth Fano $3$-folds embedded in $\PP^8$.
	
	We proceed by entirely different methods: we analyse the miniversal deformation base space via properties of toric Fano varieties and Altmann's work~\cite{altmann_versal}, and we identify the smoothings of $X$ via a topological analysis of vanishing cycles.
\end{remark}

\begin{remark}
	Since $X$ is K-polystable, Theorem~\ref{openness} implies that 
	    the general members of the $3$ deformation families of smooth Fano $3$-folds $V_{12}$, \morimukai{2}{6} and \morimukai{3}{1} are K-semistable.

	    This was already known: the general member of the deformation family \morimukai{2}{6} if K-semistable by Theorem~\ref{openness} and \cite{dervan_cover}, and every member of the family \morimukai{3}{1} is K-semistable by \cite{dervan_cover}.
	   A general member of the deformation family of $V_{12}$ is K-semistable \cite{ACC+}. Indeed, \cite{prok12} constructs a Fano $3$-fold $Y$ in the deformation family $V_{12}$ whose automorphism group contains a subgroup $G$ isomorphic to the simple group $\rm{SL}_2(\mathbb F_8)$. The Fano $3$-fold $Y$ is K-semistable because the $G$-equivariant $\alpha$-invariant is greater or equal to $1$ \cite{ACC+}, and by Theorem~\ref{openness}, the general element of the deformation family of $V_{12}$ is K-semistable.      
\end{remark}

\begin{remark}
Theorem~\ref{thm:example} is consistent with the predictions of the Fanosearch program \cite{fanosearch, quantumFano3folds, sigma, petracci_roma,thomas_lagrangian}.
For all $p,q \in \ZZ$ define Laurent polynomials by
    \[
    f_{p,q} = \left( x + xy^{-1} + y + x^{-1} + x^{-1} y + y^{-1} \right) \left( z  + 2 + z^{-1} \right) + pz + q z^{-1}.
    \]
Then, the $4$ irreducible components of the miniversal deformation base space of $X$ are expected to correspond to the Laurent polynomials $f_{2,2}$ (mirror to \morimukai{2}{6}), $f_{2,3}$ and $f_{3,2}$ (both mirror to $V_{12}$) and $f_{3,3}$ (mirror to \morimukai{3}{1}).
\end{remark}

\subsection{Definition and first properties of $X$}
\label{sec:polytopesQPand3foldX}

Let $Q$ be the convex hull of 
\begin{align}\label{coords}
    x_1 &= (1,0,0)  &   x_4 &= (-1,0,0)  \nonumber \\
    x_2 &= (1,1,0)  &  x_5 &= (-1,-1,0) \\
    x_3 &= (0,1,0)  &  x_6 &= (0,-1,0) \nonumber \\
    y_1 &= (0,0,1)  & y_2 &= (-1,0,0) \nonumber
\end{align}
 in $M= \ZZ^3$. These $8$ points are precisely the vertices of $Q$.
 The lattice points of $Q$ are its vertices and the origin $y_0 = (0,0,0)$. The polytope $Q$ is the union of two hexagonal pyramids glued along their hexagonal facets (see Figure~\ref{fig:polytopes_P_Q}).

\begin{figure}
    \centering
    \includegraphics[width=\textwidth, scale = 0.75]{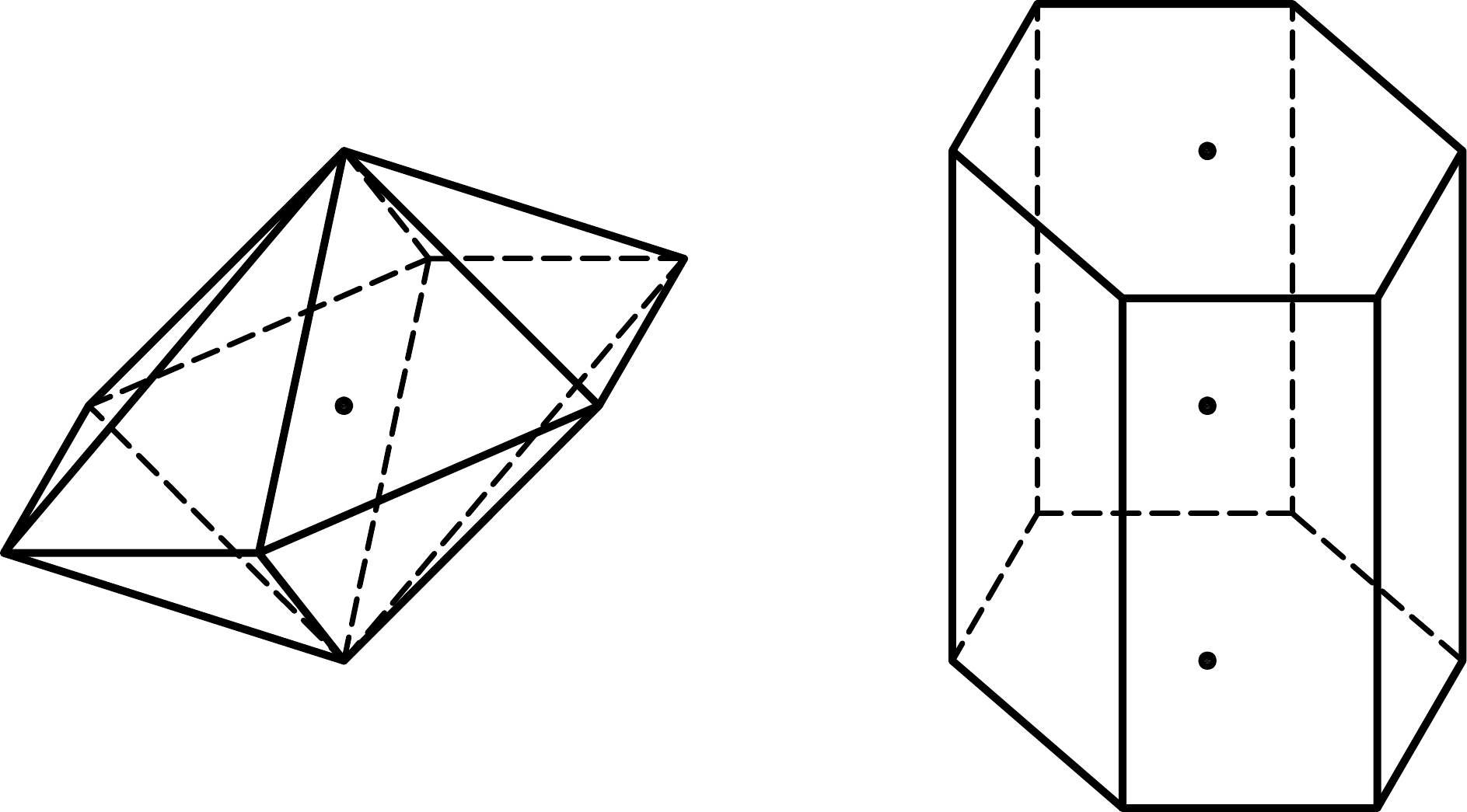}
    \caption{The polytopes $Q$ (left) and $P$ (right)} 
    \label{fig:polytopes_P_Q}
\end{figure}


Let $P$ be the polytope in the lattice $N= \Hom(M,\ZZ)$ with vertices: 
\begin{equation*}
\begin{pmatrix}
    1 \\ 0 \\ \pm 1
\end{pmatrix},
\begin{pmatrix}
    0 \\ 1 \\ \pm 1
\end{pmatrix},
\begin{pmatrix}
    -1 \\ 1 \\ \pm 1
\end{pmatrix},
\begin{pmatrix}
    -1 \\ 0 \\ \pm 1
\end{pmatrix},
\begin{pmatrix}
    0 \\ -1 \\ \pm 1
\end{pmatrix},
\begin{pmatrix}
    1 \\ -1 \\ \pm 1
\end{pmatrix}
\end{equation*}
The polytope $P$ is a hexagonal prism (see Figure~\ref{fig:polytopes_P_Q}).
One sees that $Q$ is the polar of $P$, so $P$ is reflexive (its reflexive ID in \cite{grdb} is 3875). 

The toric variety $X$ associated to the normal fan of $Q$ (face fan of $P$) is an anticanonical degree $12$ Fano $3$-fold with Gorenstein canonical singularities \cite{Kas08}.
Since the barycentre of $Q$ is the origin, $X$ is K-polystable by Theorem~\ref{toricKps}.
Assertions (i) and (ii) in Theorem~\ref{thm:example} are proved.

\subsection*{Explicit description of the embedding $X\subset \PP^8$}

By the construction recalled in \S\ref{sec:tv}, as $Q$ is the moment polytope of ${-}K_X$, the anticanonical ring satisfies 
\[ R(X,{-}K_X)\simeq \CC[ \cone{Q \times \{1\}} \cap (M \oplus \ZZ)   ],
\]
where the semigroup algebra is $\NN$-graded by the projection $M \oplus \ZZ \to \ZZ$, and we check that $R(X,{-}K_X)$ is generated in degree $1$. The anticanonical map $\Phi_{|{-}K_X|}$ is a closed embedding into $\PP^8$, whose image is defined by the quadratic binomial equations:
\begin{gather*}
  \mathrm{rank} \begin{pmatrix}
      y_0 & x_1 & x_2 \\
      x_4 & y_0 & x_3 \\
      x_5 & x_6 & y_0
  \end{pmatrix}  \leq 1 \qquad \text{and} \qquad 
  y_0^2 - y_1 y_2 = 0.
    \end{gather*}

\subsection*{Singular locus of $X$}\label{sec:sing}
Consider the open subschemes of $X$ defined by:
\begin{align*}
	Y_j &= X \cap \{y_j \neq 0\} &  &\text{for } j =1,2, \\
    U_i &= X \cap \{ x_i \neq 0 \} & &\text{for } i \in \ZZ / 6 \ZZ,  \\
    U_{i,i+1} &= U_i \cap U_{i+1} & & \text{for } i \in \ZZ / 6 \ZZ.
    \end{align*}
In other words, $Y_j$ for $j=1,2$ are the affine charts associated to the cones over the two hexagonal facets of $P$, $U_i$ for $i=1,\dots, 6$ are the charts associated to the cones over the rectangular facets of $P$, and $U_{i,i+1}$ are the charts associated to the cones over the lattice length $2$ edges of $P$. These charts cover $X$:\[
X = Y_1 \cup Y_2 \cup U_1 \cup U_2 \cup U_3 \cup U_4 \cup U_5 \cup U_6.
\]
Let $p_i\in U_i$ for $i= 1, \cdots, 6$, and $q_j\in Y_j$  for $j =1,2$ be the torus fixed points, and denote by $U$ the open non-affine subscheme of $X$  defined by \[U = X \setminus \{q_1, q_2 \} = U_1 \cup \dots \cup U_6. \]

The points $q_1$ and $q_2$ are isolated singularities; they are both isomorphic to the vertex of the cone over the anticanonical embedding of the smooth degree $6$ del Pezzo surface $S_6\subset \PP^6$.
This singularity is Gorenstein and strictly canonical and was studied by  \cite{altmann_minkowski, altmann_versal}; it admits two topologically distinct smoothings (see \S\ref{sec:local_smooth}).

The singular locus of $X$ consists of $q_1$ and $q_2$ and of a $1$-dimensional connected component $\Gamma$:
\[
\Sing X= \{q_1, q_2\} \sqcup \Gamma.
\]
The curve $\Gamma$ is a cycle of $6$ smooth rational curves and is the underlying topological space of the closed subscheme of $U$ defined by the $3$rd Fitting ideal of $\Omega^1_U$. 

We now describe the structure of $X$ in a neighbourhood of $p_i \in \Gamma$. The situation is the same up to reordering of the coordinates for all $i\in \ZZ/6\ZZ$, so we fix $i=1$. 
As $U_1= \{x_1\neq 0\}\cap X$, setting $x_1=1$ in the equations of $X$ gives:
\begin{equation}\label{eqU1}
U_1= \begin{cases}		y_0 = x_2x_6 \\
			x_3 =y_0x_2 \\
	x_4 = y_0^2\\
	x_5 = y_0x_6.
	\end{cases}\subset \AA^8\end{equation}
	so that denoting by $x=y_1$, $y= y_2$, $z= x_2$, $t=x_6$, we find that $U_1$ is isomorphic to
\begin{equation*}
V= \Spec \CC \! \left[ x,y,z,t\right] \left/ \left( xy -z^2t^2\right) \right. \! .
\end{equation*}
The hypersurface $V\subset \AA^4$ is an affine toric $3$-fold with canonical Gorenstein singularities and its singular locus $\Sing V$ has two irreducible components $(x = y = z = 0)$ and $(x = y= t = 0)$.
Generically on these two components $V$ is locally isomorphic to a product $\GG_\rmm \times (\mbox{$2$-dimensional ordinary double point})$.
Explicitly the open subscheme $V \cap \{ t \neq 0 \}$ is isomorphic to $(\GG_\rmm)_t \times A_1\subset \AA^4_{x,y,z,t}$, where $A_1 = \Spec \CC[x,y,z] / (xy-z^2)$.

Since $U_{1,2}= U_1\cap \{x_2\neq 0\}$ (resp.~$U_{1,6}= U_1\cap \{ x_6\neq 0\}$), setting $x_2=1$ (resp.~$x_6=1$) in \eqref{eqU1}, shows that $U_{1,2}$  and $U_{1,6}$ are isomorphic to
\[ \{ xy-t^2=0\} \subset (\GG_\rmm)_z \times  \AA^3_{x,y,t} \quad \mbox{and} \quad \{ xy-z^2=0\} \subset (\GG_\rmm)_t \times \AA^3_{x,y,z}.
\]

%

\subsection{Local smoothings of singularities}\label{sec:local_smooth}

\subsection*{Smoothing of the isolated singular points.}
\label{sec:cone_dP6}
Let $q\in Y$ be the vertex of the affine cone over the anticanonical embedding of the smooth del Pezzo surface of degree $6$ in $\PP^6$. 
Altmann \cite{altmann_minkowski, altmann_versal} shows that the base of the miniversal deformation of $Y$ is:
\begin{equation*}
   \Def Y \simeq \mathrm{Spf} \ \CC \pow{s_1, s_2, s_3} / (s_1 s_2, s_1 s_3).
\end{equation*}
In particular, $\Def Y$ has two irreducible components: one has dimension $2$, the other has dimension $1$ and both yield smoothings \cite{petracci_roma}.
The Betti numbers of the Milnor fibres of these smoothings are computed in the next Proposition.

\begin{proposition} \label{prop:milnor_fibres_smoothings_hexagon}
    Let $Y$ be the affine cone over the anticanonical embedding of the smooth del Pezzo surface of degree $6$ in $\PP^6$ and let $\Def Y$ be the base of the miniversal deformation of $Y$. Let $M_j$ be the Milnor fibre of the smoothing given by a general arc in the $j$-dimensional component of $\Def Y$ for $j=1,2$. The Betti numbers of $M_j$ are:
    \begin{eqnarray*}
    b_2(M_2) = 1, \qquad b_3(M_2) = 2  \qquad \text{and} \qquad b_i(M_2) = 0 \text{ if } i = 1 \text{ or } i \geq 4;
   \\
     b_2(M_1) = 2, \qquad b_3(M_1) = 1 \qquad \text{and} \qquad b_i(M_1) = 0 \text{ if } i = 1 \text{ or } i \geq 4.
\end{eqnarray*}

\end{proposition}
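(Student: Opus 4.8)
The plan is to compute $\rH^\bullet(M_j,\QQ)$ by realising each Milnor fibre inside a global degeneration of a smooth Fano $3$-fold and feeding that into the nearby/vanishing cycle formalism of \S\ref{sec:topo_smooth}. Since $M_j$ is a smooth Stein $3$-fold it is homotopy equivalent to a CW complex of real dimension $\leq 3$, so $b_0(M_j)=1$ and $b_i(M_j)=0$ for $i\geq 4$; only $b_1,b_2,b_3$ remain. For the global model I take $\overline Y\subset\PP^7$, the projective cone over the anticanonically embedded del Pezzo surface $S_6\subset\PP^6$: this is a Gorenstein canonical Fano $3$-fold whose only singular point is the vertex, which is the singularity $(Y,q)$, and using the resolution $\PP_{S_6}(\cO\oplus\cO({-}K_{S_6}))\to\overline Y$ one checks that $\overline Y$ has Fano index $2$ with $({-}\tfrac{1}{2}K_{\overline Y})^3=\deg(\overline Y\subset\PP^7)=({-}K_{S_6})^2=6$.

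First I would compute $\rH^\bullet(\overline Y,\QQ)$ by Mayer--Vietoris. Write $\overline Y=U\cup B$ with $B$ a small ball about the vertex $q$ and $U=\overline Y\setminus\{q\}$; then $U$ is the total space of a line bundle over the copy of $S_6$ at infinity, so $U\simeq S_6$, while $U\cap B\simeq L$, the link of $(Y,q)$, which is the $S^1$-bundle over $S_6$ with Euler class $c_1(S_6)$. The Gysin sequence of $L\to S_6$ gives $b_\bullet(L)=(1,0,3,3,0,1)$, and Mayer--Vietoris then gives $b_\bullet(\overline Y)=(1,0,1,0,4,0,1)$; in particular $b_1(\overline Y)=b_3(\overline Y)=0$, $b_2(\overline Y)=1$ and $b_4(\overline Y)=4$.

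Next I would identify the two global smoothings of $\overline Y$. I would first check that the two local smoothings of $(Y,q)$ — the hull being $\Def Y=\mathrm{Spf}\,\CC\pow{s_1,s_2,s_3}/(s_1s_2,s_1s_3)$ as recalled above — lift to global smoothings of $\overline Y$; since a smoothing of the Fano $3$-fold $\overline Y$ has general fibre a smooth Fano $3$-fold of index $2$ and degree $6$, Fujita's classification of del Pezzo $3$-folds identifies these with $W_1:=\PP^1\times\PP^1\times\PP^1$ and $W_2:=\PP(T_{\PP^2})$ (the incidence flag $3$-fold), each of which contains $S_6$ as a member of $|{-}\tfrac{1}{2}K|$. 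The crucial step is to match the $1$-dimensional component of $\Def Y$ (the Minkowski decomposition of the hexagonal moment polytope of $S_6$ into two triangles) with $W_1$ and the $2$-dimensional component (the decomposition into three lattice segments) with $W_2$. I would do this either from Altmann's explicit description \cite{altmann_minkowski,altmann_versal} of the general fibre of the toric deformation attached to a Minkowski decomposition, or by comparison with the Hilbert scheme computation of Christophersen and Ilten \cite{christophersen_ilten}; either way one also finds $M_j$ homotopy equivalent to the complement $W_j\setminus S_6$, which gives an alternative route to the Betti numbers via the Gysin sequence of $S_6\hookrightarrow W_j$.

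Finally, for each $j$, apply \eqref{les} to the smoothing $\overline Y\rightsquigarrow W_j$: as $\overline Y$ has a single singular point the vanishing cycle complex is concentrated at $q$ with stalk cohomology $\widetilde{\rH}^\bullet(M_j,\QQ)$, so \eqref{les} becomes
\[
\cdots\to\coh{k}{\overline Y}\to\coh{k}{W_j}\to\widetilde{\rH}^k(M_j,\QQ)\to\coh{k+1}{\overline Y}\to\cdots .
\]
The hyperplane class of $\PP^7$ is nonzero in $\rH^2(W_j,\QQ)$, so $\rH^2(\overline Y,\QQ)\to\rH^2(W_j,\QQ)$ is injective; together with $b_1(\overline Y)=b_1(W_j)=b_3(\overline Y)=b_4(M_j)=0$ and $b_4(W_j)=b_2(W_j)$ (Poincar\'e duality on the smooth $W_j$) this forces $b_1(M_j)=0$, $b_2(M_j)=b_2(W_j)-1$ and $b_3(M_j)=b_3(W_j)+b_4(\overline Y)-b_4(W_j)=4-b_2(W_j)$. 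Since $W_1$ and $W_2$ are rational homogeneous, $b_3(W_j)=0$ and $b_2(W_j)=\rank\Pic W_j$, which is $3$ for $W_1$ and $2$ for $W_2$; hence $(b_2,b_3)(M_1)=(2,1)$ and $(b_2,b_3)(M_2)=(1,2)$, as asserted. The main obstacle is the third paragraph — showing that $\overline Y$ really degenerates both $W_1$ and $W_2$ and matching these with the $1$- and $2$-dimensional components of $\Def Y$ in that order; the remaining steps are routine manipulations of the exact sequences of \S\ref{sec:topo_smooth} together with the elementary cohomology computations above.
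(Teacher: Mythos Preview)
Your approach is the paper's: pass to the projective cone $Z=\overline Y\subset\PP^7$ over $S_6$, compute $b_\bullet(Z)=(1,0,1,0,4,0,1)$, identify the two global smoothings with $(\PP^1)^3$ and $\PP(T_{\PP^2})$, and read off $b_i(M_j)$ from~\eqref{les}. The only differences are implementational: the paper obtains $b_\bullet(Z)$ from the toric formula of Proposition~\ref{prop:jordan} rather than your Mayer--Vietoris/link computation, gets $b_1(M_j)=0$ by citing \cite{greuel_steenbrink} rather than from injectivity of $\rH^2(Z)\to\rH^2(W_j)$, and handles both the matching of the components with the two $W_j$ and the smoothness of $\Def Z\to\Def Y$ by a direct citation of \cite[Proposition~2.2]{petracci_roma}, where you point to Altmann and Christophersen--Ilten.
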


\begin{proof}
The Milnor fibre of an arbitrary holomorphic map germ from a $4$-dimensional complex analytic space to $(\CC, 0)$ has the homotopy type of a finite CW complex of dimension $\leq 3$ \cite{Milnor}, so $b_i(M_1)= b_i(M_2)=0$ for all $i\geq 4$. Further, $b_1(M_1)= b_1(M_2)=0$ by \cite{greuel_steenbrink}. 
       
    Let $Z$ be the projective cone over $S_6 \subset \PP^6$. Then, $Z$ is a toric variety by \cite{petracci_roma}, and $Z$ contains $Y$ as an open subscheme. The singular locus of $Z$ consists of an isolated singularity lying on $Y$ (the vertex of the cone). By Proposition~\ref{prop:jordan}, the Betti numbers of $Z$ are $b_2(Z)  = 1$, $b_3(Z) = 0$ and $b_4(Z) = 4$.
    
  By \cite[Proposition~2.2]{petracci_roma} the restriction morphism $\Def Z \to \Def Y$ is smooth and induces an isomorphism on tangent spaces, hence the base of the miniversal deformation of $Z$ is the germ at the origin of $\left( s_1 s_2 = s_1 s_3 = 0 \right)$ in $\CC^3$.
    
    Let $\cZ \to \Delta$ be a smoothing of $Z$ given by a general arc in the component $(s_1 = 0)$ and let $M_2$ be the corresponding Milnor fibre.
 \cite[Proposition~2.2]{petracci_roma} shows that $\cZ_t$ is a divisor in $\PP^2 \times \PP^2$ of type $(1,1)$, hence $b_2(\cZ_t) = b_4(\cZ_t)= 2$ and $b_3(\cZ_t) = 0$. By the long exact sequence ~\eqref{les}, we have: 
    \begin{gather*}
        0 \to \coh{2}{Z} = \QQ \to \coh{2}{\cZ_t} = \QQ^2 \to \coh{2}{M_2} \to \coh{3}{Z} = 0 \\
        \coh{3}{\cZ_t} = 0  \to \coh{3}{M_2} \to \coh{4}{Z} = \QQ^4 \to \coh{4}{\cZ_t} = \QQ^2 \to \coh{4}{M} =0
    \end{gather*}
 so that $b_2(M_2) = 1$ and $b_3(M_2) = 2$.

Now consider $M_1$, the Milnor fibre associated to the smoothing $\cZ \to \Delta$ of $Z$ given by the component $\left(s_2 = s_3 =0\right)$. \cite[Proposition~2.2]{petracci_roma} shows that $\cZ_t$ is isomorphic to $\PP^1 \times \PP^1 \times \PP^1$, so that $b_2(\cZ_t) = b_4(\cZ_t) = 3$ and $b_3(\cZ_t) = 0$. The same argument as above yields $b_2(M_1)= 2$ and $b_3(M_1)=2$.\end{proof}

\subsection*{Local smoothings of the $1$-dimensional component of $\Sing V$} \label{sec:hypersurface_V}
Recall that each $U_i$ is isomorphic to the hypersurface $V= \{ xy-z^2t^2=0\}\subset \AA^4$.
We obtain a free resolution of $\Omega ^1_V$ 
\begin{equation}\label{conormal}
0 \longrightarrow \cO_V \xrightarrow{\begin{pmatrix}
    -2 z t^2 \\
    -2 z^2 t \\
    y \\
    x
    \end{pmatrix}} \cO_V^{\oplus 4} \longrightarrow \Omega^1_V \longrightarrow 0.
\end{equation}
by considering the conormal sequence of $V \into \AA^4 \to \Spec \CC$ . 
Applying the functor $\Hom_{\cO_V}(\cdot, \cO_V)$ to \eqref{conormal}, we get: 
\begin{equation*}
\TT^1_V = \frac{\cO_V}{(zt^2, z^2 t, x, y)} = \frac{\CC[x, y,z,t]}{(zt^2, z^2 t, x, y)},
\end{equation*}
so that $\TT^1_V$ is isomorphic to $\cO_{\Gamma}$, where $\Gamma$ is the scheme defined by the $3$rd Fitting ideal of $\Omega^1_V$, which has an embedded point at the origin. The underlying topological space of $\Gamma$ is the singular locus of $V$, with ideal $(x,y,zt)$. 
The $\CC$-vector space $\TT^1_V$ has a homogeneous basis
$
\{1, z,t \} \cup \{z^n \mid n \geq 1 \} \cup \{t^n \mid n \geq 1 \}.
$

Since $V$ is a hypersurface, $\TT^2_V = 0$ and its deformations are unobstructed. It is easy to see that the $1$-parameter deformation $(xy-z^2t^2 + \lambda = 0)$, associated to the section $1$ of $\TT^1_V$, gives a smoothing of $V$.

\begin{remark}
We denote by the same name the schemes defined by the third Fitting ideal of $\Omega^1_V$ and by the third Fitting ideal of $\Omega^1_U$. Strictly speaking, one is a local version of the other; we trust this will not lead to any confusion. 
\end{remark}

We now see how these local smoothings combine to define a smoothing of $U$. 
\subsection*{Deformations of $U_i$}
As above, fix $i=1$.
From the above, $\cT^1_{U_1}= \cT^1_{V}$ is isomorphic to the structure sheaf of $\Gamma_{\vert U_1}$ and $\TT^2_{U_1} = 0$.

Since the torus $T_N = \Spec \CC[M]$ acts on $U_1$, $\TT^1_{U_1}$ is $M$-graded. For every $m \in M$ let $\TT^1_{U_1}(m)$ denote the eigenspace with respect to the character $m$.
We have
\begin{equation*}
\dim \TT^1_{U_1}(m) =
\begin{cases}
	1 &\qquad \text{if } m=(1,0,0) \text{ or } m = (2,0,0), \\
	1 &\qquad \text{if } m=(2,k,0) \text{ with } k \in \ZZ, k \geq 1, \\
	1 &\qquad \text{if } m=(2-k,-k,0) \text{ with } k \in \ZZ, k \geq 1, \\
	0 &\qquad \text{otherwise.}
	\end{cases}
\end{equation*}
Some of these degrees are depicted on the left in Figure~\ref{fig:tre_esagoni}. We have shown:

\begin{figure}
	\centering
	\includegraphics[width=\textwidth, scale = 0.75]{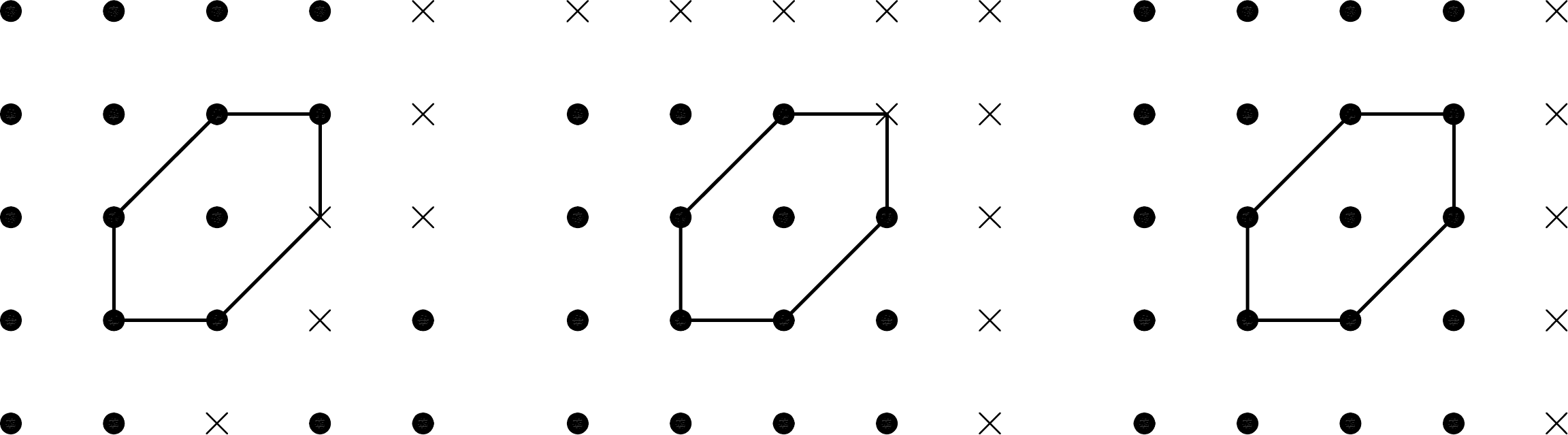}
	\caption{Some of the degrees where $\TT^1_{U_1}$ (left), $\TT^1_{U_2}$ (centre) and $\TT^1_{U_{1,2}}$ (right) are non-zero}
	\label{fig:tre_esagoni}
\end{figure}

\begin{lemma}
	For every $i \in \ZZ / 6 \ZZ$, $U_i\simeq V$, where $V$ is defined in \S\ref{sec:hypersurface_V}, $\TT^2_{U_i} = 0$, and
	\begin{equation*}
		\dim \TT^1_{U_i}(m) =
		\begin{cases}
			1 &\qquad \text{if } m=x_i \text{ or } m = 2 x_i, \\ 
			1 &\qquad \text{if } m=2x_i + k(x_{i+1} - x_i) \text{ with } k \in \ZZ, k \geq 1, \\
			1 &\qquad \text{if } m=2x_i + k(x_{i-1} - x_i) \text{ with } k \in \ZZ, k \geq 1, \\
			0 &\qquad \text{otherwise.}
		\end{cases}
	\end{equation*}
Here $x_i$ are the lattice points of $M$ defined in \eqref{coords}.
\end{lemma}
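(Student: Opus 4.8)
The plan is to reduce the whole statement to the case $i = 1$, which has been established above, by exploiting the order-$6$ rotational symmetry of the polytope $Q$. First I would introduce the lattice automorphism
\[
\sigma \;=\; \begin{pmatrix} 1 & -1 & 0 \\ 1 & 0 & 0 \\ 0 & 0 & 1 \end{pmatrix} \in \GL(M),
\]
and check directly that $\sigma$ has order $6$, fixes the lattice points $y_0, y_1, y_2$, and cyclically permutes the vertices $x_1 \mapsto x_2 \mapsto \dots \mapsto x_6 \mapsto x_1$ of $Q$ (indices read in $\ZZ/6\ZZ$); in particular $\sigma(Q) = Q$. Since $Q$ is the moment polytope of $-K_X$ and $X = \PP(Q)$, the symmetry $\sigma$ of $Q$ induces an automorphism $\bar\sigma$ of $X$ which acts $\sigma$-equivariantly on the torus $T_N$ and acts on the anticanonical ring $\CC[\cone{Q\times\{1\}} \cap (M\oplus\ZZ)]$ by permuting the lattice-point monomial basis according to $\sigma$. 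In the anticanonical embedding $X \subset \PP^8$ this sends the homogeneous coordinate $x_i$ to $x_{i+1}$, so $\bar\sigma$ restricts to isomorphisms $U_i \xrightarrow{\sim} U_{i+1}$ for every $i \in \ZZ/6\ZZ$.

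Granting this, the lemma follows by transport of structure. Composing the $\bar\sigma$'s gives an isomorphism $\bar\sigma^{\,i-1} \colon U_1 \xrightarrow{\sim} U_i$, so $U_i \simeq U_1 \simeq V$ by \S\ref{sec:hypersurface_V}; likewise $\TT^2_{U_i} \simeq \TT^2_{U_1} = 0$ (which also holds directly, as $U_i$ is a hypersurface in $\AA^4$). Since $\bar\sigma^{\,i-1}$ intertwines the $T_N$-action with the automorphism $\sigma^{\,i-1}$ of $M$, it induces $\CC$-linear isomorphisms $\TT^1_{U_1}(m) \xrightarrow{\sim} \TT^1_{U_i}(\sigma^{\,i-1} m)$ for every $m \in M$. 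It then remains to feed in the description of the nonzero weights of $\TT^1_{U_1}$ obtained above, namely $x_1$, $2x_1$, $2x_1 + k(x_2 - x_1)$ and $2x_1 + k(x_6 - x_1)$ for $k \geq 1$, each one-dimensional: using $\sigma^{\,i-1}(x_1) = x_i$, $\sigma^{\,i-1}(x_2) = x_{i+1}$ and $\sigma^{\,i-1}(x_6) = x_{i-1}$ (indices mod $6$), the image of this weight list under $\sigma^{\,i-1}$ is exactly $x_i$, $2x_i$, $2x_i + k(x_{i+1} - x_i)$ and $2x_i + k(x_{i-1} - x_i)$, which is the assertion of the lemma.

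I do not expect a genuine obstacle here: the content is entirely the case $i=1$ together with the symmetry. The only things needing care are the combinatorial verification that $\sigma$ is a symmetry of $Q$ of order $6$ realising the desired $6$-cycle on $x_1, \dots, x_6$, and the bookkeeping of whether the grading is transported along $\sigma^{\,i-1}$ or along its inverse; the latter point is harmless, since the weight list for $U_1$ is symmetric under interchanging the two families $2x_1 + k(x_2 - x_1)$ and $2x_1 + k(x_6 - x_1)$, so both conventions yield the same set. (One could instead avoid $\bar\sigma$ altogether and check by hand that the cyclic relabelling $x_i \mapsto x_{i+1}$, fixing $y_0, y_1, y_2$, preserves the ideal of $X \subset \PP^8$ displayed in \S\ref{sec:polytopesQPand3foldX}, but this verification is no shorter than the polytope argument.)
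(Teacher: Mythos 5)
Your proposal is correct and is essentially the paper's own (largely implicit) argument: the paper computes $\TT^1_{U_1}$ in the coordinates of $V$ and then simply asserts ``the situation is the same up to reordering of the coordinates for all $i \in \ZZ/6\ZZ$''; your $\sigma$ makes that reordering explicit as an order-$6$ symmetry of $Q$ realising the cycle $x_1 \mapsto x_2 \mapsto \dots \mapsto x_6$ and fixing $y_0, y_1, y_2$, and the transport of the $M$-grading along the induced automorphism of $X$ gives exactly the claimed weight list. (One small remark: the paper's displayed coordinate $y_2 = (-1,0,0)$ must be a typo for $(0,0,-1)$, since otherwise $y_2 = x_4$ and $Q$ would have only seven vertices; your check that $\sigma$ fixes $y_2$ implicitly uses the corrected value, which is the right reading.)
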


Some of the degrees of $\TT^1_{U_2}$ are depicted in the middle of Figure~\ref{fig:tre_esagoni}.

\subsection*{Deformations of $U_{i,i+1}$}
We have seen that $U_{1,2}\simeq\GG_\rmm \times A_1$, where $A_1 = \Spec \CC[x,y,t] / (xy-t^2)$. The homogeneous summands of $\TT^1_{U_{1,2}}$ are as follows:
\begin{equation*}
	\dim \TT^1_{U_{1,2}}(m) =
	\begin{cases}
		1 &\qquad \text{if } m=(2,k,0) \text{ with } k \in \ZZ, \\
		0 &\qquad \text{otherwise.}
	\end{cases}
\end{equation*}
Some of these degrees are depicted on the right in Figure~\ref{fig:tre_esagoni}.
As above, the situation is the same for all $i \in \ZZ/6\ZZ$, so that:

\begin{lemma}
	For every $i \in \ZZ / 6 \ZZ$, $\TT^2_{U_{i,i+1}} = 0$ and
	\begin{equation*}
		\dim \TT^1_{U_{i,i+1}}(m) =
		\begin{cases}
			1 &\qquad \text{if } m=2x_i + k(x_{i+1} - x_i) \text{ with } k \in \ZZ, \\
			0 &\qquad \text{otherwise.}
		\end{cases}
	\end{equation*}
	Here $x_i$ are the lattice points of $M$ defined in \eqref{coords}.
\end{lemma}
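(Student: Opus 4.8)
The plan is to reduce the statement to the affine chart already studied in \S\ref{sec:hypersurface_V}. Recall that, after the monomial change of coordinates used there, $U_{i,i+1}$ is the principal open $\{z \neq 0\}$ of $V = \Spec \CC[x,y,z,t]/(xy - z^2 t^2)$, equivalently $\GG_\rmm \times A_1$ with $A_1 = \Spec \CC[x,y,t]/(xy-t^2)$. The whole construction is invariant under cyclically permuting $x_1, \dots, x_6$, which is realised by a lattice automorphism of $M$ preserving $Q$; this maps $U_{i,i+1}$ to $U_{i+1,i+2}$ and intertwines the $M$-gradings, so it is enough to prove the statement for $i=1$, with $U_{1,2} = V \cap \{z \neq 0\}$.

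For the vanishing of $\TT^2_{U_{1,2}}$: the conormal sequence of $V \hookrightarrow \AA^4$ gives the length-one locally free resolution \eqref{conormal} of $\Omega^1_V$, so $\cExt^{\geq 2}_{\cO_V}(\Omega^1_V, \cO_V) = 0$; restricting to the principal open $U_{1,2}$ gives $\cT^2_{U_{1,2}} = 0$, and since $U_{1,2}$ is affine, the local-to-global spectral sequence yields $\TT^2_{U_{1,2}} = \rH^0(U_{1,2}, \cT^2_{U_{1,2}}) = 0$.

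For $\TT^1_{U_{1,2}}$: restrict \eqref{conormal} to $U_{1,2}$ and apply $\Hom_{\cO_{U_{1,2}}}(-, \cO_{U_{1,2}})$. The image of the transpose of the Jacobian is the ideal $(zt^2, z^2 t, x, y)$, which equals $(x, y, t)$ once $z$ is invertible, so $\cT^1_{U_{1,2}} \cong \cO_{U_{1,2}}/(x,y,t)$, the structure sheaf of the subtorus $\{x=y=t=0\} \cong \GG_\rmm$ with coordinate $z$. Affineness of $U_{1,2}$ then gives $\TT^1_{U_{1,2}} = \rH^0(U_{1,2}, \cT^1_{U_{1,2}}) = \CC[z^{\pm 1}]$, one-dimensional over $\CC$ in each power of $z$. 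It remains to translate this $\ZZ$-grading into the $M$-grading: the distinguished generator of $\cT^1_{U_{1,2}}$ — the image of $\id_{\cO_V}$ under the dual of \eqref{conormal} — sits in $M$-degree $-\deg f$ with $f = xy - z^2 t^2$, and a short computation from \eqref{coords} (using $\deg f = 2(\deg z + \deg t)$ and $x_2 + x_6 = x_1$) gives $\deg f = -2x_1$, so the generator lies in degree $2x_1$. Since $z = x_2/x_1$ has $M$-degree $x_2 - x_1$, the basis $\{z^k\}_{k \in \ZZ}$ of $\TT^1_{U_{1,2}}$ occupies exactly the degrees $2x_1 + k(x_2 - x_1)$, $k \in \ZZ$, each with multiplicity one; this is the asserted formula for $i = 1$, and the general case follows by the symmetry above.

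The step I expect to be most delicate is the last one: pinning down that the canonical generator of $\TT^1_{U_{1,2}}$ produced by dualising the conormal resolution lies in $M$-degree $-\deg f = 2x_1$ (rather than $x_1$ or $0$), together with the sign and shift conventions that make its $z$-translates reproduce exactly the claimed arithmetic progression. The vanishing of $\TT^2$, the compatibility of $\cExt$-sheaves with restriction to principal opens, and the passage from $\cT^1$ to $\TT^1$ via affineness are all routine.
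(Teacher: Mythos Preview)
Your proof is correct and follows the same route as the paper, which in fact gives no proof beyond stating the result for $i=1$ (in the explicit form $m=(2,k,0)$) and invoking symmetry. You have simply filled in the details the paper leaves implicit: localising the conormal resolution \eqref{conormal} at $z$, reading off $\cT^1_{U_{1,2}}\cong \CC[z^{\pm 1}]$, and translating the $\ZZ$-grading in $z$ into the $M$-grading. Your ``delicate step'' is fine: with $z=x_2/x_1$ and $t=x_6/x_1$ one has $\deg z+\deg t=(x_2-x_1)+(x_6-x_1)=-x_1$ (since $x_2+x_6=x_1$), so $\deg f=-2x_1$ and the canonical generator of $\TT^1$ sits in degree $2x_1$, exactly matching the paper's $(2,0,0)$.
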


\subsection*{Deformations of $U$}

\begin{lemma}\label{lem:H0_H1_U}
    $\rH^0(U, \cT^1_U) = \CC^{18}$ and $\rH^1(U, \cT^1_U) = 0$.
    \end{lemma}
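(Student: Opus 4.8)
The scheme $U$ is covered by the six charts $U_i$ ($i \in \ZZ/6\ZZ$), with pairwise intersections $U_{i,i+1}$ and empty triple intersections (since the cones over the rectangular facets of $P$ meet only along the cones over the lattice-length-$2$ edges, and no three rectangular facets share a common edge). Hence I would compute $\rH^\bullet(U, \cT^1_U)$ from the \v{C}ech complex associated to this cover:
\[
0 \longrightarrow \bigoplus_{i \in \ZZ/6\ZZ} \rH^0(U_i, \cT^1_{U_i}) \longrightarrow \bigoplus_{i \in \ZZ/6\ZZ} \rH^0(U_{i,i+1}, \cT^1_{U_{i,i+1}}) \longrightarrow 0,
\]
using that $\cT^1_U$ restricts to $\cT^1_{U_i}$ on each chart (the $\cT^1$ sheaf is local) and that each $U_i$, $U_{i,i+1}$ is affine so the higher \v{C}ech cohomology of the cover already computes $\rH^\bullet(U, \cT^1_U)$. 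Thus $\rH^1(U, \cT^1_U) = \coker$ of the \v{C}ech differential and $\rH^0(U,\cT^1_U) = \ker$, and everything reduces to understanding this single map of $\CC$-vector spaces. I would work $M$-homogeneously throughout, since the torus acts on $U$ and on all these sheaves, so the \v{C}ech complex splits as a direct sum over $m \in M$ of complexes of finite-dimensional spaces.

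\textbf{Key steps.} First, for a fixed degree $m \in M$, record from the two preceding lemmas the dimensions $\dim \TT^1_{U_i}(m)$ and $\dim \TT^1_{U_{i,i+1}}(m)$. The relevant degrees all lie in the plane $\{m_3 = 0\}$; in that plane, $\TT^1_{U_i}$ is supported on the three half-lines $\{x_i\}$, $\{2x_i\}$, and the two rays $2x_i + \ZZ_{\geq 1}(x_{i\pm 1}-x_i)$ emanating from $2x_i$, while $\TT^1_{U_{i,i+1}}$ is supported on the full line through $2x_i$ and $2x_{i+1}$. Second, observe that the restriction map $\TT^1_{U_i}(m) \to \TT^1_{U_{i,i+1}}(m)$ is an isomorphism exactly on the ray $2x_i + \ZZ_{\geq 1}(x_{i+1}-x_i)$ (where the generator of $\TT^1_{U_i}$ in that degree restricts to the generator of $\TT^1_{U_{i,i+1}}$) and is zero on the degrees $x_i$, $2x_i$ and on the opposite ray; symmetrically for the map to $\TT^1_{U_{i-1,i}}$. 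Third, assemble this into the \v{C}ech differential $\bigoplus_i \TT^1_{U_i}(m) \to \bigoplus_i \TT^1_{U_{i,i+1}}(m)$ and compute kernel and cokernel degree by degree: the degrees $m = x_i$ and $m = 2x_i$ contribute only to the kernel ($6 + 6 = 12$ dimensions, none of these degrees appears in any $\TT^1_{U_{j,j+1}}$), and on each of the $12$ rays $2x_i + \ZZ_{\geq 1}(x_{i\pm1}-x_i)$ the differential is injective onto its image, contributing nothing to the kernel there, while on the remaining degrees of the $U_{i,i+1}$ (namely $2x_i + \ZZ_{\leq 0}(x_{i+1}-x_i)$, which is the same set as $2x_{i+1} + \ZZ_{\leq 0}(x_i - x_{i+1})$ once one accounts for the point $2x_i$ itself and the point $2x_{i+1}$) the differential is surjective. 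Counting: $\rH^0 = 12 + (\text{contribution of the overlapping rays})$; a careful bookkeeping of which degrees of $\bigoplus_i \TT^1_{U_{i,i+1}}$ are hit gives $\rH^0(U,\cT^1_U) = 18$ and $\rH^1(U,\cT^1_U) = 0$.

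\textbf{Main obstacle.} The genuine work is the degree-by-degree bookkeeping on the rectangular "rays": for a given line through $2x_i$ and $2x_{i+1}$, the space $\TT^1_{U_{i,i+1}}(m)$ is $1$-dimensional for \emph{every} lattice point $m$ on that line, whereas $\TT^1_{U_i}$ and $\TT^1_{U_{i+1}}$ only see the half-lines $2x_i + \ZZ_{\geq 1}(x_{i+1}-x_i)$ and $2x_{i+1} + \ZZ_{\geq 1}(x_i-x_{i+1})$ respectively — which overlap precisely in the open segment strictly between $2x_i$ and $2x_{i+1}$ and miss the two "outward" half-lines. One must check that the differential is an isomorphism on the overlap segment (so it contributes neither to $\ker$ nor to $\coker$, and the two sources map to the \emph{same} target there, which is what kills $\rH^1$ and simultaneously cuts the expected $\rH^0$ down from a naive $24$ to $18$), is injective-not-surjective on the two outward half-lines coming from a single $U_i$ beyond the segment, and that $\coker$ vanishes everywhere. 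The expected answer $\rH^0 = 18$ then splits as $12$ (from the degrees $x_i, 2x_i$) plus $6$ (one surviving dimension per edge $U_{i,i+1}$, namely the degree $2x_i$-to-$2x_{i+1}$ segment contributing a $1$-dimensional kernel after the diagonal identification), and $\rH^1 = 0$ because the differential is surjective in every degree. I would present this as a short table of degrees versus dimensions rather than a case analysis in prose.
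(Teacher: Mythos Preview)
Your overall strategy---compute the \v{C}ech complex of $\cT^1_U$ with respect to the cover $\{U_i\}$ and analyse it degree by degree in $M$---is exactly the approach the paper takes. However, your degree-by-degree bookkeeping contains errors that, if carried through, would not yield the claimed answer.

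The key mistake is your treatment of the degrees $m = 2x_i$. You assert that ``none of these degrees appears in any $\TT^1_{U_{j,j+1}}$'', but this is false: the lemma on $\TT^1_{U_{i,i+1}}$ says it is nonzero at $m = 2x_i + k(x_{i+1}-x_i)$ for \emph{all} $k \in \ZZ$, in particular at $k=0$ (which is $2x_i$) and $k=2$ (which is $2x_{i+1}$). So at $m = 2x_i$ the target is $\TT^1_{U_{i-1,i}}(2x_i) \oplus \TT^1_{U_{i,i+1}}(2x_i) = \CC^2$, and the source picks up contributions from $U_{i-1}$, $U_i$, $U_{i+1}$ (each one-dimensional), giving the complex $\CC^3 \to \CC^2$. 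The restriction maps are all nonzero here (the constant section of $\cT^1_{U_i}$ restricts to the constant section of $\cT^1_{U_{i,i+1}}$), so the differential is surjective with one-dimensional kernel.

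A related error: on the line through $2x_i$ and $2x_{i+1}$, the support of $\TT^1_{U_i}$ is $k \geq 0$ (not $k \geq 1$---you have excluded $2x_i$ itself) and the support of $\TT^1_{U_{i+1}}$ is $k \leq 2$. These two half-lines together cover the entire line, which is why the \v{C}ech differential is surjective in every degree and $\rH^1 = 0$. Your description (``miss the two outward half-lines'') would leave infinitely many degrees in the cokernel. Moreover, on the overlap point $k=1$ (i.e.\ $m = x_i + x_{i+1}$) the complex is $\CC^2 \to \CC$ via $(-1,1)$: surjective with one-dimensional kernel, not an isomorphism as you state.

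The correct count of $\rH^0$ is therefore: $6$ from the degrees $x_i$ (complex $\CC \to 0$), $6$ from the degrees $2x_i$ (kernel of $\CC^3 \to \CC^2$), and $6$ from the degrees $x_i + x_{i+1}$ (kernel of $\CC^2 \to \CC$), with all remaining degrees on the six lines giving $\CC \overset{\sim}{\to} \CC$.
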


\begin{proof}
	If $j \notin \{ i, i\pm 1 \}$,  then $U_i \cap U_j$ is smooth, so the restriction of $\cT^1_U$ to $U_i \cap U_j$ is zero. It follows that the \v{C}ech complex of the coherent sheaf $\cT^1_U$ with respect to the affine cover $\{U_i \mid i \in \ZZ / 6 \ZZ \}$ is concentrated in degrees $0$ and $1$:
	\begin{equation*}
	\bigoplus_{i \in \ZZ / 6  \ZZ} \TT^1_{U_i} \overset{d}\longrightarrow 
	\bigoplus_{i \in \ZZ / 6  \ZZ} \TT^1_{U_{i,i+1}}.
	\end{equation*}
As this is a homomorphism of $M$-graded vector spaces, we analyse the homogeneous components of this complex for every degree $m \in M$.
	\begin{enumerate}[label=$\bullet$]
		\item If $m=x_i$ is a vertex of the hexagon, the complex is $\CC \to 0$.
		\item If $m = 2x_i$, the complex is
		\begin{equation*}
		\CC^3 \xrightarrow{\begin{pmatrix}
				-1 & 1 & 0 \\ 0 & -1 & 1
		\end{pmatrix}} \CC^2.
		\end{equation*}
	\item If $m=x_i + x_{i+1}$, the complex is
	\begin{equation*}
		\CC^2 \xrightarrow{\begin{pmatrix}
				-1 & 1 
		\end{pmatrix}} \CC^1.
	\end{equation*}
\item If $m = (2-k) x_i + k x_{i+1}$ with $k \in \ZZ$ and $|k| \geq 3$, the complex is
\begin{equation*}
\CC \overset{\pm 1}\longrightarrow \CC.
\end{equation*}
\item In all the other cases, the complex is $0 \to 0$.
	\end{enumerate}

From $\rH^0(U,\cT^1_U) = \ker d$ we deduce
\begin{equation*}
\dim \rH^0(U, \cT^1_U)(m) = \begin{cases}
1 &\qquad \text{if } m=x_i \text{ for some } i \in \ZZ / 6 \ZZ, \\
1 &\qquad \text{if } m=2x_i \text{ for some } i \in \ZZ / 6 \ZZ, \\
1 &\qquad \text{if } m=x_i+x_{i+1} \text{ for some } i \in \ZZ / 6 \ZZ, \\
0 &\qquad \text{otherwise.}
\end{cases}
\end{equation*}
The $18$ degrees of $M$ in which $\rH^0(U,\cT^1_U)\neq 0$ are depicted in
Figure~\ref{fig:esagono_solo}.
Since the differential $d$ is surjective, we have $\rH^1(U, \cT^1_U) = \coker d = 0$.
\end{proof}

\begin{figure}
	\centering
	\includegraphics[width=5cm, scale = 0.75]{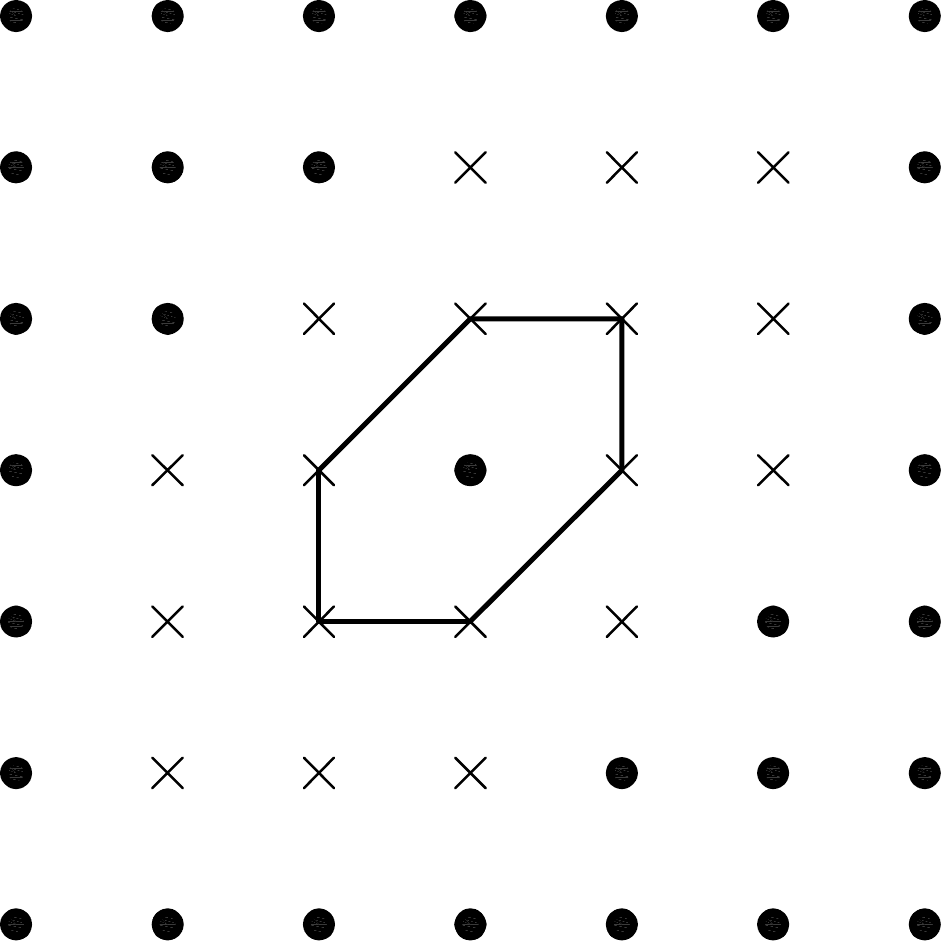}
	\caption{The $18$ degrees in which $\rH^0(U, \cT^1_U)\neq 0$ }
	\label{fig:esagono_solo}
\end{figure}

\begin{remark} 
We have seen that $\cT^1_U$ is a line bundle on the non-reduced scheme $\Gamma$. One can show that
	\[\cT^1_U = \cO_\Gamma(2)\]
		where 
	$\cO_X(1) = \omega_X^\vee$ defines the embedding in $\PP^8$.
    For $i \in \ZZ / 6 \ZZ$ let $\Gamma_{i,i+1}$ be the irreducible component of $\Gamma$ given by the closure of $\Gamma \cap U_{i,i+1}$.
    Let $\Gamma_\mathrm{red}$ denote the reduced structure of $\Gamma$.
    Tensoring the two short exact sequences
	\begin{equation*}
		0 \longrightarrow \bigoplus_{i \in \ZZ / 6 \ZZ} \kappa(p_i) \longrightarrow \cO_\Gamma \longrightarrow \cO_{\Gamma_\mathrm{red}} \longrightarrow 0
	\end{equation*}
and
	\begin{equation*}
	0 \longrightarrow   \cO_{\Gamma_\mathrm{red}} \longrightarrow \bigoplus_{i \in \ZZ/6\ZZ} \cO_{\Gamma_{i,i+1}} \longrightarrow \bigoplus_{i \in \ZZ / 6 \ZZ} \kappa(p_i) \longrightarrow 0
\end{equation*}
with $\cO_X(2)$, one computes the dimensions of $\rH^0(U, \cT^1_U)$ and of $\rH^1(U, \cT^1_U)$. This gives another proof of Lemma~\ref{lem:H0_H1_U}.
\end{remark}

\subsection{Smoothings of $X$ --- proof of Theorem~\ref{thm:example}(iii, iv)}
Recall that $\Sing X$ has $3$ connected components: the points $(q_1 \in Y_1)$ and $(q_2 \in Y_2)$ and the reducible curve $\Gamma \subset U$.
Since $U$ is lci, $\cT^2_U = 0$. We have  
\begin{align*}
\cT^1_X &= \cT^1_U \oplus \TT^1_{Y_1} \oplus \TT^1_{Y_2}, \\
\cT^2_X &= \TT^2_{Y_1} \oplus \TT^2_{Y_2}.
\end{align*}
Lemma~\ref{lem:H0_H1_U} shows that
\begin{align*}
	\rH^0(X, \cT^1_X) &= \CC^{18} \oplus \TT^1_{Y_1} \oplus \TT^1_{Y_2}, \\
	\rH^1(X, \cT^1_X) &= 0.
    \end{align*}
Moreover
\[	\rH^0(X, \cT^2_X) = \TT^2_{Y_1} \oplus \TT^2_{Y_2}.\]
As $X$ is a toric Fano variety, $\rH^1(X, \cT^0_X) = 0$ and $\rH^2(X, \cT^0_X) = 0$ (see \cite[Proof of Theorem~5.1]{totaro} and \cite[Lemma~4.4]{petracci_survey}).
From the spectral sequence 
\begin{equation*}
E_2^{p,q} = \rH^p(X, \cT^q_X) \Longrightarrow \TT^{p+q}_X
\end{equation*}
we deduce
\begin{align*}
	\TT^1_X &= \rH^0(X, \cT^1_X) = \CC^{18} \oplus \TT^1_{Y_1} \oplus \TT^1_{Y_2}, \\
	\TT^2_X &= \rH^0(X, \cT^2_X) = \TT^2_{Y_1} \oplus \TT^2_{Y_2},
	\end{align*}
and hence the product of the restriction morphisms
\begin{equation*}
\Def X \longrightarrow \Def Y_1 \times \Def Y_2
\end{equation*}
is smooth of relative dimension $18$.
The discussion of \S\ref{sec:cone_dP6} concludes the proof of Theorem~\ref{thm:example}(iii).

Now we  determine the general fibres of the smoothings of $X$. 

\begin{lemma} \label{lem:smoothing_of_X}
Each irreducible component of the base space of miniversal deformations of $X$ yields a smoothing of $X$.
\end{lemma}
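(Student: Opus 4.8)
The plan is to reduce the global smoothing question for $X$ to the local smoothing questions at $q_1$, $q_2$, and along $\Gamma$, which have already been settled. Recall from the previous subsection that we have a morphism of deformation functors $\Def X \to \Def Y_1 \times \Def Y_2$ which is smooth of relative dimension $18$; combined with Altmann's description $\Def Y_j \simeq \mathrm{Spf}\, \CC\pow{s_1^{(j)}, s_2^{(j)}, s_3^{(j)}}/(s_1^{(j)} s_2^{(j)}, s_1^{(j)} s_3^{(j)})$, this recovers the presentation of the miniversal base $\Def X$ in Theorem~\ref{thm:example}(iii), with the four components corresponding to the four ways of choosing one component in each of the two copies of $\Def Y_j$. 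So the first step is to fix an irreducible component $W$ of $\Def X$ and identify it, via the smooth morphism above, with (a smooth bundle over) the product of a chosen component $W_1 \subseteq \Def Y_1$ and a chosen component $W_2 \subseteq \Def Y_2$.

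Next I would use the openness of the smooth (equivalently, klt, or even just normal) locus in a flat family, together with the fact that each local deformation is known to be smoothing: a general arc in the $1$- or $2$-dimensional component of $\Def Y_j$ smooths the singularity $q_j$ (this is exactly the content of \S\ref{sec:cone_dP6} and Proposition~\ref{prop:milnor_fibres_smoothings_hexagon}, which presuppose that the Milnor fibres are smooth manifolds), and the $1$-parameter family $(xy - z^2 t^2 + \lambda = 0)$ associated to the section $1 \in \TT^1_V$ smooths $V \simeq U_i$ along $\Gamma$. The key point is that the $18$-dimensional "extra" directions in $\Def X$ over $\Def Y_1 \times \Def Y_2$ are precisely $\rH^0(U, \cT^1_U)$, and among these the six degrees $m = x_i$ (the vertices of the hexagon) restrict on each chart $U_i$ to the distinguished smoothing section; so a general point of the fibre of $\Def X \to \Def Y_1 \times \Def Y_2$ already smooths the curve $\Gamma$. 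Thus for a general arc in $W$, the total space is, over a punctured disc, simultaneously smooth near (the specialization of) $q_1$, near $q_2$, and near every point of $\Gamma$.

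The remaining step is to globalize: $\Sing X = \{q_1\} \sqcup \{q_2\} \sqcup \Gamma$ is a closed subset, and for a general arc $\cZ \to \Delta$ in $W$ we have shown that each point of $\Sing X$ has a neighbourhood in $\cZ$ over which the family is smooth away from the central fibre. Since $\Sing X$ is compact, finitely many such neighbourhoods cover it, so there is an $\epsi > 0$ with $\cZ_t$ smooth along (a neighbourhood of) $\Sing X$ for $0 < |t| < \epsi$; and $\cZ_t$ is automatically smooth away from (the closure of) $\Sing X$ because $X \setminus \Sing X$ is smooth and smoothness is open in the total space. Hence the general fibre $\cZ_t$ is a smooth Fano $3$-fold, i.e.\ the chosen component $W$ yields a smoothing of $X$. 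Running over the four components $W$ completes the proof.

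The main obstacle, I expect, is the bookkeeping in the middle step: one must check that a \emph{general} point of the $18$-dimensional fibre of $\Def X \to \Def Y_1 \times \Def Y_2$ really does smooth \emph{every} component of $\Gamma$ simultaneously — i.e.\ that the six "vertex" sections $x_1, \dots, x_6 \in \rH^0(U, \cT^1_U)$ are not all one needs to switch on, but that a general combination of the $18$ sections is nowhere-vanishing as a section of the line bundle $\cT^1_U = \cO_\Gamma(2)$ on the non-reduced curve $\Gamma$. This is where the computation $\cT^1_U = \cO_\Gamma(2)$ and the exact sequences for $\cO_\Gamma$ in the Remark after Lemma~\ref{lem:H0_H1_U} do the real work: one checks that $\cO_\Gamma(2)$ is globally generated, so a general global section vanishes nowhere on $\Gamma$, forcing the corresponding first-order deformation — and hence a general arc — to be a smoothing along all of $\Gamma$ at once.
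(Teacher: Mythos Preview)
Your overall architecture matches the paper's: use the smooth morphism $\Def X \to \Def Y_1 \times \Def Y_2$, note that each factor $\Def Y_j$ has its two smoothing components already analysed, and then check that the $18$-dimensional fibre direction (i.e.\ $\rH^0(U,\cT^1_U)$) contains an honest smoothing of $U$. Combining these gives a smoothing of $X$ on each of the four components. The paper's proof is exactly this, stated in three sentences.

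There are, however, two concrete errors in your execution of the middle step.

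\textbf{Wrong degrees.} The sections of $\rH^0(U,\cT^1_U)$ in degrees $x_i$ do \emph{not} restrict on $U_i$ to the smoothing section $1\in\TT^1_{U_i}$; they restrict to $zt$. (One sees this from the \v{C}ech analysis: the degree $x_i$ is \emph{not} of the form $(2,k,0)$ and so is killed by localisation to $U_{i,i+1}$, whereas $1$ survives; equivalently, the element supported only at the embedded point $p_i$ is $zt$.) The deformation $xy-z^2t^2+\lambda\,zt=0$ is still singular at the origin, so the $x_i$-directions do not smooth $U$. The paper instead uses the six degrees $2x_i$: the global section in degree $2x_i$ restricts on the chart $U_i$ to the constant $1$, hence to the explicit smoothing $xy-z^2t^2+\lambda_i$.

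\textbf{Global generation does not give nowhere-vanishing sections.} Your fallback claim that a general section of the globally generated line bundle $\cT^1_U=\cO_\Gamma(2)$ is nowhere vanishing is false: on each reduced component $\Gamma_{i,i+1}\simeq\PP^1$ the bundle restricts to $\cO_{\PP^1}(2)$, so every section vanishes at two points on each component. (A general section of a globally generated line bundle on a positive-dimensional scheme is nowhere vanishing only if the bundle is trivial.) So this line of argument cannot replace the explicit identification of a smoothing direction.

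With the correction $x_i\rightsquigarrow 2x_i$, your argument goes through and coincides with the paper's: on $U_i$ the combined deformation reads $xy-z^2t^2+\lambda_i+(\text{terms in }z,t)$, which for $\lambda_i\neq 0$ has nonvanishing gradient everywhere on the hypersurface.
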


\begin{proof}
	The six degrees $2x_i \in M$, for $i \in \ZZ/6\ZZ$, of $\rH^0(U, \cT^1_U)$ can be combined to give a deformation of $X$ over a smooth $6$-dimensional germ because they are unobstructed. As noted in \S\ref{sec:hypersurface_V} this defines a smoothing of $U$.
 Combining this deformation of $X$, which smoothes $U$, with the smoothings of the two isolated singularities $(q_j \in Y_j)$, $j=1,2$, finishes the proof.
\end{proof}

In this section, we use topological properties of deformations to determine which Fano $3$-folds appear as general fibres in the three distinct smoothings above. 
Let $f\colon \cX \to \Delta$ be a smoothing of $X$; then for any $t\in \Delta$, $\cX_t$ is a smooth Fano $3$-fold of anticanonical degree $({-}K_X)^3= 12$. By the classification of smooth Fano $3$-folds \cite{Isk1, Isk2,MM82, MM03}, there are precisely $5$ deformation families of smooth Fano $3$-folds with anticanonical degree $12$:
\begin{enumerate}[label=$\bullet$]
\item $V_{12}\subset \PP^8$, the prime Fano $3$-fold of genus $7$, with $\chi=-10$,
\item \morimukai{2}{5}, the blowup of $V_3\subset \PP^4$ along a plane cubic lying on it, with $\chi= -6$,
\item \morimukai{2}{6}, either a $(2,2)$-divisor on $\PP^2\times \PP^2$, or a double cover of a $(1,1)$-divisor on $\PP^2\times \PP^2$ branched along an anticanonical section, with $\chi=-12$,
\item \morimukai{3}{1}, a double cover of $\PP^1\times \PP^1\times \PP^1$ branched along an anticanonical section, $\chi= -8$. 
\item $\PP^1\times S_2$, where $S_2$ is a smooth del Pezzo surface of degree $2$. 
\end{enumerate}

\begin{remark}
By Kawamata--Viehweg vanishing, the anti-plurigenera are deformation invariant, and very ampleness of the anticanonical divisor is an open condition. In particular, $\PP^1 \times S_2$ cannot arise as a general fibre of a smoothing of $X$ because its anticanonical line bundle is not very ample.
\end{remark}

By \eqref{topology_smoothing}, the Euler characteristics of $X$ and $\cX_t$ are related by:
\[ \chi(\cX_t)= \chi(X)+  \chi(\mathbb H^\bullet(X, \varphi_f \underline \QQ_{\cX})),\]
where $\varphi_f \underline \QQ_{\cX}$ is the sheaf of vanishing cycles of $f$ introduced in \S\ref{sec:topo_smooth}. 
Using Proposition~\ref{prop:jordan}, we compute the Betti numbers of $X$; these are $b_1(X) = b_5(X) = 0$, $b_2(X) = 1$, $b_3(X) = 4$, $b_4(X) = 9$, so that $\chi(X)= 8$. 

We now gather some information on $\chi(\mathbb H^\bullet(X, \varphi_f \underline \QQ_{\cX}))$. 
The complex $\varphi_f \underline \QQ_\cX^\bullet$ is constructible, by \cite[Appendix B.4]{Massey}. 
The sheaf of vanishing cycles is supported on the singular locus of $X$, so that by Mayer--Vietoris: 
\[ \chi (\mathbb H^\bullet (X, (\varphi_f \underline \QQ)^\bullet)) = \chi (\mathbb H^\bullet(X, (\varphi_f \underline \QQ)^\bullet_{\vert Y_1}))+ \chi (\mathbb H^\bullet(X, (\varphi_f \underline \QQ)^\bullet_{\vert Y_2}))+ \chi (\mathbb H^\bullet(X, (\varphi_f \underline \QQ)^\bullet_{\vert U})).\]
By the results of \S\ref{sec:topo_smooth} we have the following equality of Euler characteristics 
\[
\chi (\mathbb H^\bullet(X, (\varphi_f \underline \QQ)^\bullet_{\vert Y_i}))= \widetilde{\chi}(F_{Y_i}),
\]
where $\widetilde{\chi}$ denotes the reduced Euler characteristic and  $F_{Y_i}$ denotes the Milnor fibre of the induced smoothing of the singular point $Y_i$. Recall that each $Y_i$ has two possible smoothings, the Milnor fibres of which are denoted $M_1$ and $M_2$, and by Proposition~\ref{prop:milnor_fibres_smoothings_hexagon}, 
\[ \widetilde{\chi}(F_{Y_i})= \pm1.\]
It follows that for a smoothing $f\colon \cX\to \Delta$, we have 
\[ \chi(\cX_t) = \chi(X)  +\chi (\mathbb H^\bullet(X, (\varphi_f \underline \QQ)^\bullet_{\vert U})) +\begin{cases} 2\\0\\-2\end{cases}.\]

There are $3$ distinct smoothings. Indeed, $\Def X$ has $4$ components: one $22$-dimensional component, two $21$-dimensional components that are exchanged by the involution of $X$ swapping the isolated singularities $(q_1 \in Y_1)$ and $(q_2 \in Y_2)$, and one $20$-dimensional component. Let $A$ (resp.~ $B$, resp.~$C$) be a general fibre of the smoothing over the $22$ (resp.~$21$, resp.~$20$)-dimensional component of $\Def X$. 
Then $A,B,C$ are smooth Fano $3$-folds of degree $12$ with very ample anticanonical line bundle, so their Euler characteristics belong to $\{ -6,-8,-10,-12\}$.
Consider a general arc in the $22$-dimensional component of $\Def X$, i.e.\ the smoothing from $X$ to $A$. In this smoothing the Milnor fibre of the singularity $(q_j \in Y_j)$, for each $j=1,2$, is $M_2$. We thus have that: 
\[
\chi(A)= 8+ \chi (\mathbb H^\bullet(X, (\varphi_f \underline \QQ)^\bullet_{\vert U})) -2 = 6+\chi (\mathbb H^\bullet(X, (\varphi_f \underline \QQ)^\bullet_{\vert U}))
\]
and similarly, 
 on each of the $21$-dimensional components, the Milnor fibre of one of $(q_j \in Y_j)$ is $M_2$ and that of the other is $M_1$. We thus get that 
 \[
 \chi(B)= 8+ \chi (\mathbb H^\bullet(X, (\varphi_f \underline \QQ)^\bullet_{\vert U})) +0= \chi(A)+2.
 \]
 Similarly, on the $20$-dimensional components, the Milnor fibre of both $(q_j \in Y_j)$ is $M_1$ and 
 \[
 \chi (C)= 8+ \chi (\mathbb H^\bullet(X, (\varphi_f \underline \QQ)^\bullet_{\vert U})) + 2 = \chi(A)+4.
 \]
 The Euler characteristics of $A$, $B$ and $C$ are thus either $-6$, $-8$ and $-10$ or $-8$, $-10$, $-12$, respectively. In the first case, $\chi (\mathbb H^\bullet(X, (\varphi_f \underline \QQ)^\bullet_{\vert U}))=-16$, while in the second $\chi (\mathbb H^\bullet(X, (\varphi_f \underline \QQ)^\bullet_{\vert U}))=-18$. 
 
By \cite[Theorem 4.1.22]{Dimca}, if $\mathcal S$ is a Whitney regular stratification such that $\varphi_f\underline \QQ^\bullet_{|U}$ is an $\mathcal S$-constructible bounded complex on $X$, and $x_S\in S$ is an arbitrary point of the stratum $S\in \mathcal S$, then 
\[\chi (\mathbb H^\bullet(X, (\varphi_f \underline \QQ)^\bullet_{\vert U})) = \sum_{S\in \mathcal S}\chi(S)\cdot \chi(\mathcal H^\bullet( \varphi_f \underline \QQ^\bullet_{\vert U})_{x_S}).\]
The complex $\varphi_f\underline \QQ^\bullet_{|U}$ is supported on the curve $\Gamma$, and by the description in  \S\ref{sec:sing}, the local description is the same on each component of $\Gamma$. We may assume (up to taking a refinement of $\mathcal S$) that $\mathcal S$ induces a stratification of each component of $\Gamma$ and that the induced stratifications on the components of $\Gamma$ are isomorphic to each other. It follows that the number of components of $\Gamma$-- i.e.~$6$-- necessarily divides the Euler characteristic $\chi (\mathbb H^\bullet(X, (\varphi_f \underline \QQ)^\bullet_{\vert U}))$, which thus has to be $-18$. It follows that $A$ is in the family \morimukai{2}{6}, $B$ is a prime Fano $V_{12}$ and $C$ is in the family \morimukai{3}{1}.
This concludes the proof of Theorem~\ref{thm:example}.

\subsection{Local description of the K-moduli space and stack}
\label{Kmodsp}

\subsection*{The automorphism group of $X$}
Let $\Aut(P)$ be the subgroup of $\GL(N) = \GL_3(\ZZ)$ that preserves the polytope $P$. It is clear from the symmetries of $P$, that $\Aut(P)$ is generated by the involution
\begin{equation*}
    \begin{pmatrix}
        1 & 0 & 0 \\
        0 & 1 & 0 \\
        0 & 0 & -1
    \end{pmatrix},
\end{equation*}
which swaps the top facet of $P$ and the bottom facet of $P$, and
and by the dihedral group of the hexagon, itself generated by
\begin{equation*}
    \begin{pmatrix}
        1 & 1 & 0 \\
        -1 & 0 & 0 \\
        0 & 0 & 1
    \end{pmatrix}
    \qquad \text{and} \qquad
    \begin{pmatrix}
        0 & 1 & 0 \\
        1 & 0 & 0 \\
        0 & 0 & 1
    \end{pmatrix}.
\end{equation*}
Therefore, $\Aut(P)$ is a finite group of order $24$ isomorphic to $D_6 \rtimes C_2$, where $D_6$ denotes the dihedral group of order $12$ and $C_2$ the cyclic group of order $2$. 
Since no facet of $Q$ has interior lattice points, by Proposition~\ref{prop:automorphism_toric_fano}, $\Aut(X)$ is the semidirect product of $T_N = N \otimes_\ZZ \CC^* \simeq (\CC^*)^3$ with $\Aut(P)$.

\subsection*{Local reducibility of K-moduli}

Here we examine the local structure of the algebraic stack $\stack{3}{12}$ and of the algebraic space $\modspace{3}{12}$ at the closed point corresponding to $X$. Denote by $G$ the group $\Aut(X)$ and let
\[
A = \CC \pow{t_1, \dots, t_{24}} / (t_1 t_2, t_1 t_3, t_4 t_5, t_4 t_6)
\]
be the miniversal deformation base space of $X$.
The group $G$ acts on $A$ and the invariant subring $A^G$ is the completion of $\modspace{3}{12}$ at the closed point corresponding to $X$.
Recall that $G \simeq (T_N)\rtimes \Aut(P)\simeq (\CC^*)^3 \rtimes (D_6 \rtimes C_2)$, we understand the action of $G$ on $A$ by looking at the action of the factors in this semidirect product decomposition. 
More precisely:
    \begin{enumerate}[label=$\bullet$]
\item $T_N = (\CC^*)^3$ acts diagonally with weights given by the characters of the $T_N$-representation $\TT^1_X$. More precisely:  $t_1, t_2, t_3$ have degree $(0,0,1) \in M$, $t_4, t_5, t_6$ have degree $(0,0,-1) \in M$, and the degrees of $t_7, \dots, t_{24}$ are the $18$ elements in $M$ depicted in Figure~\ref{fig:esagono_solo}.
\item
$D_6$ fixes $t_1,\dots, t_6$ and permutes $t_7, \dots, t_{24}$, and 
\item $C_2$ fixes $t_7, \dots, t_{24}$ and swaps $t_1$ and $t_4$, $t_2$ and $t_5$, and $t_3$ and $t_6$.
\end{enumerate}

Since the $G$-action on $A$ is linear in $t_1, \dots, t_{24}$ we can work with
the $\CC$-algebra
\[
B = \CC [t_1, \dots, t_{24}]/(t_1 t_2, t_1 t_3, t_4 t_5, t_4 t_6)
\]
together with the $G$-action, rather than having to consider power series.
The completion of $B$ at the origin is $A$ and that of $B^G$ at the origin will be $A^G$.

Consider the $\CC$-algebras
\begin{align*}
    R &= \CC [t_1, \dots, t_{6}]/(t_1 t_2, t_1 t_3, t_4 t_5, t_4 t_6), \mbox{ and } \\
    S &= \CC[t_7, \dots, t_{24}].
\end{align*}
Consider the subtori $\CC^*$ and $(\CC^*)^2$ of $T_N$ with cocharacter lattices $\{(0,0)\} \times \ZZ \subseteq N$ and $\ZZ^2 \times \{0\} \subseteq N$, respectively, then:
 \[B = R \otimes_\CC S\] and the actions of $(\CC^*)$ and of $C_2$ are non-trivial only on $R$, while the actions of $(\CC^*)^2$ and of $D_6$ are non-trivial only on $S$.
Therefore we have:
\[
B^G = (R^{\CC^*})^{C_2} \otimes_\CC (S^{(\CC^*)^2})^{D_6}.
\]
Let us analyse the two rings of invariants $(R^{\CC^*})^{C_2}$ and $(S^{(\CC^*)^2})^{D_6}$.

The ring $S^{(\CC^*)^2}$ defines a toric affine variety of dimension $16$. Taking the finite quotient under the action of $D_6$, shows that $(S^{(\CC^*)^2})^{D_6}$ is a normal domain of dimension $16$.

Let us consider the action of $\CC^*$ on the polynomial ring $\CC[t_1, \dots, t_6]$: the invariant subring $\CC [t_1, \dots, t_6]^{\CC^*}$ is generated by
\[
\begin{matrix}
y_0 = t_1 t_4 & y_3 = t_2 t_4 & y_6 = t_3 t_4 \\
y_1 = t_1 t_5 & y_4 = t_2 t_5 & y_7 = t_3 t_5 \\
y_2 = t_1 t_6 & y_5 = t_2 t_6 & y_8 = t_3 t_6
\end{matrix}
\]
as a $\CC$-algebra.
Therefore $\CC [t_1, \dots, t_6]^{\CC^*}$ is the quotient of $\CC [y_0, \dots, y_8]$ with respect to the ideal generated by the $2 \times 2$-minors of the $3 \times 3$-matrix containing $y_0, \dots, y_8$ as above. In other words, $\CC [t_1, \dots, t_6]^{\CC^*}$ is the homogeneous coordinate ring of the Segre embedding $\PP^2 \times \PP^2 \into \PP^8$.
Since $\CC^*$ is reductive, the surjection $\CC[t_1, \dots, t_6] \onto R$ induces a surjection $\CC[t_1, \dots, t_6]^{\CC^*} \onto R^{\CC^*}$. Consequently we have a surjection $\CC[y_0, \dots, y_8] \onto R^{\CC^*}$; its kernel is the ideal generated by
\begin{gather*}
    y_0 y_1, \
    y_0 y_2, \
    y_0 y_3, \
    y_0 y_4, \
    y_0 y_5, \
    y_0 y_6, \
    y_0 y_7, \
    y_0 y_8, \\
    y_1 y_3, \
    y_1 y_4, \ 
    y_1 y_5, \
    y_1 y_6, \
    y_1 y_7, \
    y_1 y_8, \\
    y_2 y_3, \
    y_2 y_4, \ 
    y_2 y_5, \
    y_2 y_6, \
    y_2 y_7, \
    y_2 y_8, \\
    y_3 y_4, \
    y_3 y_5, \
    y_3 y_7, \
    y_3 y_8, \
    y_4 y_6, \\
    y_4 y_8 - y_5 y_7, \
    y_5 y_6, \ 
    y_6 y_7, \
    y_6 y_8.
\end{gather*}
The action of $C_2$ on $\CC[y_0, \dots, y_8]$ is as follows: $y_0$, $y_4$, $y_8$ are kept fixed, $y_1$ is swapped with $y_3$, $y_2$ is swapped with $y_6$, and $y_5$ is swapped with $y_7$.
Therefore the invariant subring $\CC[y_0, \dots, y_8]^{C_2}$ is the polynomial ring $\CC[z_1, \dots, z_9]$ where
\begin{gather*}
    z_1 = y_0 \quad z_2 = y_4 \quad z_3 = y_8, \\
    z_4 = y_1 + y_3 \quad z_5 = y_1 y_3, \\
    z_6 = y_2 + y_6 \quad z_7 = y_2 y_6, \\
    z_8 = y_5 + y_7 \quad z_9 = y_5 y_7.
\end{gather*}
Since $C_2$ is a finite group, the surjection $\CC[y_0, \dots, y_8] \onto R^{\CC^*}$ induces a surjection
$\CC[z_1, \dots, z_9] \onto (R^{\CC^*})^{C_2}$; one checks that the kernel of this surjection is the ideal generated by
\begin{gather*}
    z_5, \
    z_7, \
    z_1 z_2, \
    z_1 z_3, \
    z_1 z_4, \
    z_1 z_6, \
    z_1 z_8, \
    z_1 z_9,
    \\
    z_2 z_4, \ 
    z_2 z_6, \
    z_3 z_4, \
    z_3 z_6, \ 
    z_4 z_8, \
    z_4 z_9, \
    z_6 z_8, \
    z_6 z_9,  \\
    z_2 z_3 - z_9. 
\end{gather*}
One checks that this ideal is the intersection of the following $3$ prime ideals
\begin{gather*}
    (z_1,
    z_2 z_3 - z_9,
    z_4,
    z_5,
    z_6,
    z_7),
    \\
    (  z_1,
    z_2,
    z_3,
    z_5,
    z_7,
    z_8,
    z_9),
    \\
    (    z_2,
    z_3,
    z_4,
    z_5,
    z_6,
    z_7,
    z_8,
    z_9).
\end{gather*}
This implies that $\Spec  (R^{\CC^*})^{C_2}$ is reduced and has $3$ irreducible components, which are all smooth and have dimensions $3$, $2$, and $1$.

Therefore $\Spec A^G$ is reduced and has $3$ irreducible components, all of which are normal and have dimensions $19$, $18$, and $17$.
We have proved: 

\begin{theorem}\label{Kmod-ex} Let $X$ be the Fano $3$-fold in Theorem~\ref{thm:example}. The K-moduli stack $\stack{3}{12}$ is reduced at $[X]$ and its completion at $[X]$ has $4$ branches. The K-moduli space $\modspace{3}{12}$  is reduced at $[X]$ and its completion at $[X]$ has $3$ irreducible components. 
\end{theorem}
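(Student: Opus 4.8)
\emph{Proof proposal.} The plan is to extract both assertions from the Luna \'etale slice presentation of K-moduli recalled in \S\ref{sec:KstabFa} together with the descriptions of $A$ and of $A^G$ obtained above. That presentation furnishes formally \'etale morphisms $[\Spec A/G]\to\stack{3}{12}$ and $\Spec A^G\to\modspace{3}{12}$ sending the closed point to $[X]$. Since being reduced at a point, and the number of branches at a point, are invariant under formally \'etale (indeed smooth) base change, it suffices to prove the analogous statements for $[\Spec A/G]$ and for $\Spec A^G$ at the appropriate closed points.

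For the stack, I would first observe that the closed point of $\Spec A$ (the one parametrising $X$ itself) is $G$-fixed, because the $G$-action preserves the maximal ideal; hence it is the unique point of the smooth atlas $\Spec A\to[\Spec A/G]$ lying over $[X]$, with reduced one-point fibre, so the completion of $\stack{3}{12}$ at $[X]$ has the same reduction type and the same branches as the complete local ring $A$. It then remains to note that $A=\CC\pow{t_1,\dots,t_{24}}/(t_1t_2,\,t_1t_3,\,t_4t_5,\,t_4t_6)$ is reduced with exactly $4$ minimal primes: the ideal is generated by squarefree monomials, hence radical, and its minimal primes are the ideals generated by the minimal transversals of $\{1,2\},\{1,3\},\{4,5\},\{4,6\}$, namely $(t_1,t_4)$, $(t_1,t_5,t_6)$, $(t_2,t_3,t_4)$, $(t_2,t_3,t_5,t_6)$ --- matching the four components of $\Def X$ of dimensions $22,21,21,20$. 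This gives the statement for $\stack{3}{12}$.

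For the K-moduli space I would use the factorisation $B^G=\bigl(R^{\CC^*}\bigr)^{C_2}\otimes_\CC\bigl(S^{(\CC^*)^2}\bigr)^{D_6}$ established above (working, as there, with the polynomial model $B$, whose completion at the origin is $A^G$). The second factor is the quotient of the normal affine toric variety $\Spec S^{(\CC^*)^2}$ by the finite group $D_6$, hence a $16$-dimensional normal domain. The first factor is computed explicitly to be $\CC[z_1,\dots,z_9]$ modulo an ideal that one checks is the intersection of three prime ideals (generated by subsets of the $z_i$, together with the relation $z_2z_3-z_9$) of coheights $3$, $2$, $1$, so $\Spec\bigl(R^{\CC^*}\bigr)^{C_2}$ is reduced with exactly $3$ irreducible components. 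Since $\CC$ is algebraically closed, the tensor product over $\CC$ of a reduced $\CC$-algebra with a domain is reduced, and the product of two irreducible affine $\CC$-schemes is irreducible; hence $\Spec B^G$, and therefore its completion $\Spec A^G$, is reduced with exactly $3$ irreducible components, of dimensions $19,18,17$. Transporting this through the formally \'etale morphism yields the assertion for $\modspace{3}{12}$.

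The only step that is conceptual rather than computational is the transfer of ``reduced'' and ``number of branches'' along the Luna slice diagram and along the atlas of the quotient stack --- in particular the verification that the fibre of the atlas over $[X]$ is a single reduced point, so that passing to the stack neither destroys reducedness nor creates or merges branches. Everything else --- the squarefree monomial primary decomposition giving the four components upstairs, and the $\CC^*$- then $C_2$-invariant computations giving the three-component ideal downstairs --- is routine and has been carried out above.
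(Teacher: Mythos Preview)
Your approach matches the paper's: both deduce the theorem from the Luna slice presentation together with the explicit descriptions of $A$ and $A^G$ worked out in the preceding subsection, and you invoke exactly those computations (the squarefree monomial primary decomposition of $A$, the tensor factorisation $B^G=(R^{\CC^*})^{C_2}\otimes_\CC(S^{(\CC^*)^2})^{D_6}$, and the explicit three-component decomposition of the first factor).

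One step deserves correction. The fibre of the atlas $\Spec A\to[\Spec A/G]$ over the closed point is \emph{not} ``a single reduced point'': it is a $G$-torsor, hence a copy of $G$, which is positive-dimensional with $\lvert\Aut(P)\rvert=24$ connected components. This does not affect the reducedness claim --- a stack admitting a smooth atlas by a reduced scheme is reduced --- but it does bear on the branch count. The component group $\pi_0(G)=\Aut(P)$ contains the involution that swaps $t_1\leftrightarrow t_4$, $t_2\leftrightarrow t_5$, $t_3\leftrightarrow t_6$, and this involution exchanges the two $21$-dimensional minimal primes $(t_1,t_5,t_6)$ and $(t_2,t_3,t_4)$ of $A$. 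Consequently the underlying topological space $\lvert[\Spec A/G]\rvert$ has only three irreducible components, not four. The paper's ``$4$ branches'' for the stack is meant as the number of components of the miniversal base $\Spf A$ (equivalently, of any smooth atlas near $[X]$); your write-up should make explicit which notion of ``branch of a stack'' is in play, since your stated justification --- a unique reduced fibre point forcing a bijection on branches --- does not establish this and is literally false.
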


\section{Non-reduced K-moduli and other pathologies}

\subsection{Non-reduced K-moduli}
\label{sec:non_reduced_K_moduli}
In this section, we prove Theorem~\ref{non-red}. 
Let $P$ be the convex hull of the following points in $N = \ZZ^3$ (see Figure~\ref{fig:polytope_non_reduced}):
\begin{align*}
    a_0 &= \begin{pmatrix}
        1 \\ 0 \\ 1
    \end{pmatrix}
    , &
    d_0 &= \begin{pmatrix}
        1 \\ 1 \\ 1
    \end{pmatrix}
    , &
    c_1 &= \begin{pmatrix}
        0 \\ 1 \\ 1
    \end{pmatrix}
    , & 
    b_1 &= \begin{pmatrix}
        -1 \\ -1 \\ 1
    \end{pmatrix}
    \\
    a_1 &= \begin{pmatrix}
        -1 \\ 0 \\ -1
    \end{pmatrix}
    , &
    d_1 &= \begin{pmatrix}
        -1 \\ -1 \\ -1
    \end{pmatrix}
    , &
    c_0 &= \begin{pmatrix}
        0 \\ -1 \\ -1
    \end{pmatrix}
    , & 
    b_0 &= \begin{pmatrix}
        1 \\ 1 \\ -1
    \end{pmatrix}.
\end{align*}

\begin{figure}
    \centering
    \includegraphics[width=5cm,scale = 0.75]{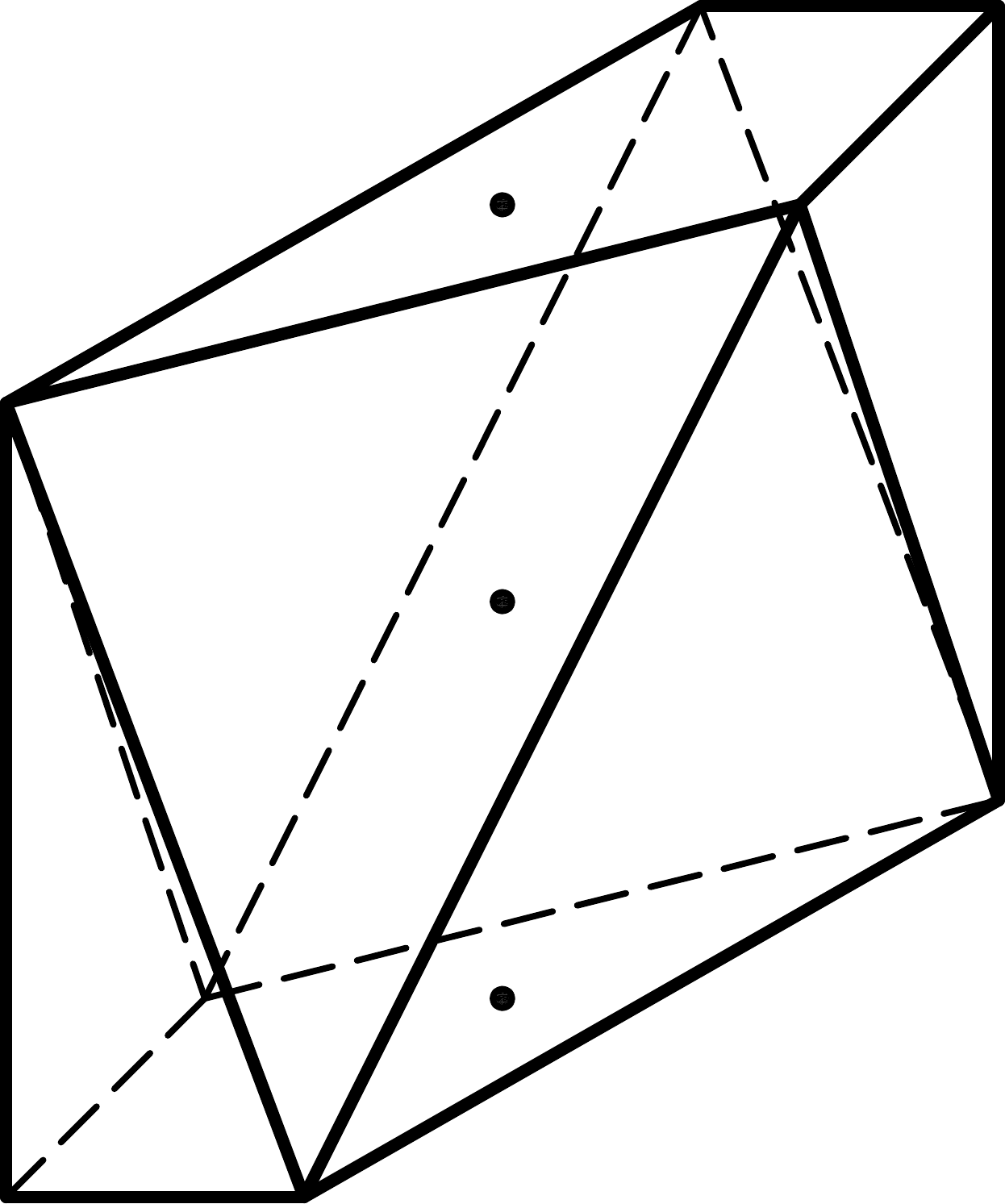}
    \caption{The polytope $P$ in \S\ref{sec:non_reduced_K_moduli}}
    \label{fig:polytope_non_reduced}
\end{figure}

The toric variety $X$ associated to the face fan of $P$ is a Fano $3$-fold with canonical singularities.
Since $P$ is centrally symmetric, its polar $P^\circ$ is also centrally symmetric and hence $X$ is K-polystable by Theorem~\ref{toricKps}. The anticanonical degree of $X$ is the normalised volume of $P^\circ$, so that $(-K_X)^3= \frac{44}{3}$. 

The singular locus of $X$ has $8$ irreducible connected components; these are: 
\begin{enumerate}
    \item For $i \in \{0,1\}$ the facet $\conv{a_i, d_i, c_{1-i}, b_{1-i}}$ gives a Gorenstein isolated singularity isomorphic to the affine cone over the anticanonical embedding of $\FF_1$ into $\PP^8$, where $\FF_1 = \PP(\cO_{\PP^1} \oplus \cO_{\PP^1} (1))$ is the $1$st Hirzebruch surface. By \cite[9.1.iv]{altmann_versal} the hull of the deformation functor of this singularity is $\CC [t] / (t^2)$.
    
    \item The $4$ facets $\conv{a_i, b_i, c_i}$ and $\conv{a_i, c_i, b_{1-i}}$, for $i \in \{0,1\}$,  give the non-Gorenstein isolated singularity $\frac{1}{3}(1,1,2)$, whose canonical cover is $\AA^3$. Therefore these singularities do not contribute to $\TTqG{1}{X}$ and $\TTqG{2}{X}$.
    
    \item For $i \in \{0,1\}$ the closure of the torus orbit corresponding to the edge $\conv{b_i, d_i}$ is a curve $C_i \simeq \PP^1$ along which $X$ has transverse $A_1$ singularities. By \cite{petracci_aft}, in a neighbourhood of $C_i$ the sheaf $\cT^1_X$ coincides with $\cTqG{1}{X}$ and with $\cO_{C_i}(-2)$. Therefore this sheaf has trivial $\rH^0$ and $1$-dimensional $\rH^1$.
\end{enumerate}

Let $U_0$ and $U_1$ be affine neighbourhoods of the isolated Gorenstein singularities of $X$; then:

\begin{align*}
    \TTqG{1}{X} &= \TT^1_{U_0} \oplus \TT^1_{U_1} = \CC^2, \\
    \TTqG{2}{X} &= \TT^2_{U_0} \oplus \TT^2_{U_1} \oplus \rH^1(C_0, \cO_{C_0}(-2)) \oplus \rH^1(C_1, \cO_{C_1}(-2)) = \CC^4.
\end{align*}
The degrees in $M$ of $\TTqG{1}{X}$ are $(0,0,1)$ and $(0,0,-1)$.
The degrees in $M$ of $\TTqG{2}{X}$ are $(0,0,2)$, $(0,0,-2)$, $(1,1,0)$, $(-1,-1,0)$.
This implies that the base of the miniversal $\QQ$-Gorenstein deformation of $X$ is
\[
A = \CC [t_0, t_1]/(t_0^2, t_1^2)
\]
where $t_0$ (resp.\ $t_1$) has degree $(0,0,1)$ (resp.\ $(0,0,-1)$) in $M$.

The automorphism group $\Aut(P)$ of $P$ is generated by the reflection about the origin $- \mathrm{id}_N$ and by the reflection across the plane containing $b_0$, $b_1$, $d_0$, $d_1$.
The group $\Aut(P)$ is isomorphic to $C_2 \times C_2$.
Since no facet of $P^\circ$ has interior lattice points, by Proposition~\ref{prop:automorphism_toric_fano} $G = \Aut(X)$ is the semidirect product of the torus $T_N$ with $\Aut(P)$.

Let us now determine the subring of invariants $A^G \subseteq A$.
The action of the torus $T_N = (\CC^*)^3$ on $A$ is described as follows: the degree of $t_0$ (resp.\ $t_1$) is the character $(0,0,1)$ (resp.\ $(0,0,-1)$) in $M$.
Therefore $A^{(\CC^*)^3} = \CC [t]/(t^2)$, where $t = t_0 t_1$.
The group $\Aut(P) = C_2 \times C_2$ acts trivially on $A^{(\CC^*)^3}$, hence
\begin{equation*}
    A^G = \CC[t] / (t^2).
\end{equation*}
The spectrum of this $0$-dimensional ring is the connected component of $\modspace{3}{44/3}$ containing the point corresponding to $X$. This proves Theorem~\ref{non-red}.

\subsection{ K-moduli of products}

\begin{proposition} \label{prop:product_fano}
   Let $X$ and $Y$ be log terminal Fano varieties. If $Y$ is Gorenstein, then the natural map
   \begin{equation} \label{eq:product_map}
   \DefqG X \times \Def Y \longrightarrow \DefqG X \times Y
   \end{equation}
   is formally smooth and induces an isomorphism on tangent spaces.
\end{proposition}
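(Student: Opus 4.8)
The plan is to reduce the statement to a comparison of (co)tangent complexes and then to an analysis of the Ext spectral sequence computing $\QQ$-Gorenstein deformations. First I would observe that, since $Y$ is Gorenstein, the canonical cover stack of $Y$ is $Y$ itself, so $\DefqG Y = \Def Y$ and the canonical cover stack of $X \times Y$ is $\frakX \times Y$, where $\frakX$ is the canonical cover stack of $X$. Hence \eqref{eq:product_map} may be rewritten as the product map $\DefqG{\frakX} \times \Def Y \to \DefqG{\frakX \times Y}$, and it suffices to prove the analogous statement for $\frakX$ (a smooth Deligne--Mumford stack away from a codimension $\geq 2$ locus, with $Y$ playing the Gorenstein role) and $Y$. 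This removes the $\QQ$-Gorenstein subtlety from $Y$ and lets me work with honest cotangent complexes of the cover stacks.

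Next I would invoke the K\"unneth formula for the cotangent complex: for the product $\frakX \times Y$ one has $\LL_{\frakX \times Y} \simeq \mathrm{pr}_1^* \LL_{\frakX} \oplus \mathrm{pr}_2^* \LL_Y$. Applying $R\cHom(-, \cO)$ and pushing forward, a K\"unneth/base-change argument computes the groups $\TT^i$ governing deformations of $\frakX \times Y$ in terms of those of $\frakX$ and $Y$: concretely, the natural map gives
\[
\TT^1_{\frakX \times Y} \;\cong\; \TT^1_{\frakX} \;\oplus\; \TT^1_Y
\]
on tangent spaces, using that $\rH^0(\frakX,\cO)=\rH^0(Y,\cO)=\CC$ and that $\frakX$, $Y$ are proper with $\rH^{>0}$ of the structure sheaf controlled by the Fano/Kodaira--Nakano-type vanishing (for the relevant low-degree pieces this reduces to $\rH^0(\cO)=\CC$ and the vanishing of $\rH^1(\cT^0)$, as used already in the proof of Theorem~\ref{thm:example}). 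The same computation bounds $\TT^2_{\frakX \times Y}$: the obstruction space of the product receives a contribution only from $\TT^2_{\frakX}$, $\TT^2_Y$ and possible cross terms $\rH^1(\frakX,\cT^0_{\frakX}) \otimes \rH^0(Y,\cdot)$, which vanish.

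Finally I would assemble these into the conclusion. The isomorphism on tangent spaces is exactly the K\"unneth identity above. For formal smoothness I would use the standard obstruction-theoretic criterion: a morphism of deformation functors inducing an isomorphism on tangent spaces and an injection on obstruction spaces is formally smooth; here the map $\TT^2_{\frakX} \oplus \TT^2_Y \to \TT^2_{\frakX \times Y}$ is injective (indeed an isomorphism onto a direct summand) by the K\"unneth analysis, and one checks compatibility of the obstruction maps via functoriality of the cotangent complex under the two projections. The main obstacle I expect is the bookkeeping in the K\"unneth decomposition of the Ext spectral sequence $E_2^{p,q} = \rH^p(\cT^q) \Rightarrow \TT^{p+q}$ for the product: one must verify that all the potential cross terms $\rH^p(\frakX, \cT^a_{\frakX}) \otimes \rH^q(Y, \cT^b_Y)$ with $a+b+p+q \leq 2$ other than the two ``pure'' summands actually vanish, which relies on $\rH^{>0}(\cO_{\frakX}) = \rH^{>0}(\cO_Y) = 0$ in the relevant degrees and on the vanishing of $\rH^1$ of the tangent sheaves — all available for (stacky) Fano varieties, but needing to be spelled out carefully for the cover stack $\frakX$ rather than just for $X$.
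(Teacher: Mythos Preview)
Your proposal is correct and follows essentially the same strategy as the paper: identify the canonical cover stack of $X \times Y$ as $\frakX \times Y$, use the splitting $\LL_{\frakX \times Y} \simeq p^* \LL_\frakX \oplus q^* \LL_Y$, invoke Kawamata--Viehweg vanishing for $\rH^{>0}(\cO)$, and conclude via the standard ``isomorphism on tangent spaces, injection on obstructions'' criterion.

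The paper's execution is cleaner than yours in one respect worth noting. Rather than running a K\"unneth spectral sequence and chasing cross terms, the paper computes each summand directly by derived adjunction: since $\rH^i(\cO_Y)=0$ for $i>0$ one has $\rR p_* \cO_{\frakX \times Y} = \cO_\frakX$, whence
\[
\Ext^i_{\frakX \times Y}(p^*\LL_\frakX, \cO_{\frakX \times Y}) = \Ext^i_\frakX(\LL_\frakX, \rR p_* \cO_{\frakX \times Y}) = \TTqG{i}{X},
\]
and symmetrically for the other summand. This gives $\TTqG{i}{X\times Y} = \TTqG{i}{X} \oplus \TT^i_Y$ for \emph{all} $i$ at once. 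In particular, no vanishing of $\rH^1(\cT^0)$ is needed, and there are no cross terms of the form $\cT^a_\frakX \otimes \cT^b_Y$ to worry about: because $\LL$ splits as a direct \emph{sum} (not a tensor product), the only ``mixing'' is between $\cT^q$ on one factor and $\cO$ on the other, and that is exactly what the adjunction handles.
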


\begin{proof}
    Let $\epsi \colon \frakX \to X$ be the canonical cover stack.
    Then the product map $\epsi \times \id_Y$ is the canonical cover stack of $X \times Y$.
    Let $p \colon \frakX \times Y \to \frakX$ and $q \colon \frakX \times Y \to Y$ be the projections.
    
        By Kawamata--Viehweg vanishing, $\rH^i(\cO_Y) = 0$ for every $i>0$.
    This implies that $\rR p_* \cO_{\frakX \times Y} = \cO_\frakX$.
    For every $i \geq 0$ we have
    \begin{align*}
        \Ext^i_{\frakX \times Y} (p^* \LL_\frakX, \cO_{\frakX \times Y}) &= \Ext^i_\frakX (\LL_\frakX , \rR p_* \cO_{\frakX \times Y}) \\
        &= \Ext^i_\frakX (\LL_\frakX , \cO_{\frakX}) \\
        &= \TTqG{i}{X}.
    \end{align*}
    
    Since $\epsi$ is cohomologically affine and $\epsi_* \cO_\frakX = \cO_X$, by Kawamata--Viehweg vanishing,  we have $\rH^0(\cO_\frakX) = \CC$ and 
    $\rH^i(\cO_\frakX) = 0$ for every $i >0$. This implies that $\rR q_* \cO_{\frakX \times Y} = \cO_Y$. 
    For every $i \geq 0$ we have
    \begin{align*}
    \Ext^i_{\frakX \times Y} (q^* \LL_Y, \cO_{\frakX \times Y}) &= \Ext^i_Y (\LL_Y , \rR q_* \cO_{\frakX \times Y}) \\
    &= \Ext^i_Y (\LL_Y , \cO_Y) \\
    &= \TT^{i}_{Y}. 
\end{align*}

    The decomposition of the cotangent complex
    \[
    \LL_{\frakX \times Y} = p^* \LL_\frakX \oplus q^* \LL_Y
    \]
    implies
    \begin{align*}
        \TTqG{i}{X \times Y} &= \Ext^i_{\frakX \times Y}(\LL_{\frakX \times Y}, \cO_{\frakX \times Y}) \\
        &= \Ext^i_{\frakX \times Y} (p^* \LL_\frakX, \cO_{\frakX \times Y}) \oplus \Ext^i_{\frakX \times Y} (q^* \LL_Y, \cO_{\frakX \times Y}) \\
        &= \TTqG{i}{X} \oplus \TT^{i}_{Y}
        \end{align*}
    for every $i \geq 0$.
   Therefore, the map \eqref{eq:product_map} induces a bijection on tangent spaces and an injection on obstruction spaces.
    \end{proof}
This result shows that we can generalise examples of pathological K-moduli spaces to higher dimension. For example, an immediate corollary of Theorem~\ref{thm:example} is obtained by considering the products $X \times \PP^{n-3}$, where $X$ denotes the Fano variety in that Theorem. We obtain: 

\begin{theorem} Let $X$ be the K-polystable toric Fano $3$-fold $X$ considered in Theorem~\ref{thm:example}.
If $n \geq 4$, $V = 2n(n-1)(n-2)^{n-2}$ ,
then $\stack{n}{V}$ and $\modspace{n}{V}$ are not smooth at the point corresponding to $X \times \PP^{n-3}$.
\end{theorem}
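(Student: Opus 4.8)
The plan is to reduce the statement to an analysis of the miniversal $\QQ$-Gorenstein deformation of $X \times \PP^{n-3}$ and then invoke the Luna slice description of K-moduli. First I would note that $X \times \PP^{n-3}$ is a K-polystable Fano $n$-fold: K-polystability of products of K-polystable Fano varieties is known (and $\PP^{n-3}$ is K-polystable), so $X \times \PP^{n-3}$ defines a closed point of $\stack{n}{V}$ where $V = ({-}K_{X \times \PP^{n-3}})^n$. A quick intersection-theoretic computation using $({-}K_X)^3 = 12$, $\dim X = 3$, and $({-}K_{\PP^{n-3}})^{n-3} = (n-2)^{n-3}$, together with the multinomial expansion of $({-}K_X \boxplus {-}K_{\PP^{n-3}})^n$, gives the anticanonical volume: the only surviving term is $\binom{n}{3}({-}K_X)^3 ({-}K_{\PP^{n-3}})^{n-3} = \frac{n(n-1)(n-2)}{6} \cdot 12 \cdot (n-2)^{n-3} = 2n(n-1)(n-2)^{n-2}$, matching the stated $V$.

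Next I would apply Proposition~\ref{prop:product_fano} with the roles of the two factors being $X$ (which is Gorenstein, hence $\DefqG X = \Def X$) and $\PP^{n-3}$. Since $\PP^{n-3}$ is smooth, $\Def \PP^{n-3} = 0$ (it is rigid), so Proposition~\ref{prop:product_fano} tells us that the natural map $\Def X \times \Def \PP^{n-3} = \Def X \to \DefqG{(X \times \PP^{n-3})}$ is formally smooth and an isomorphism on tangent spaces; in particular the hull of $\DefqG{(X \times \PP^{n-3})}$ is a power series ring over the hull of $\Def X$. By Theorem~\ref{thm:example}(iii), the latter is $\CC\pow{t_1,\dots,t_{24}}/(t_1t_2, t_1t_3, t_4t_5, t_4t_6)$, which is visibly not regular at its closed point (it is reducible). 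Hence the hull $A'$ of $\DefqG{(X \times \PP^{n-3})}$ is not a power series algebra — it is singular at its closed point.

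Finally I would feed this into the Luna étale slice theorem for K-moduli recalled in \S\ref{sec:KstabFa}: with $G' = \Aut(X \times \PP^{n-3})$ reductive, there is a formally étale map $[\Spec A' / G'] \to \stack{n}{V}$ sending the closed point to $[X \times \PP^{n-3}]$, and correspondingly $\Spec (A')^{G'} \to \modspace{n}{V}$. Smoothness is formally étale local, so $\stack{n}{V}$ is smooth at $[X \times \PP^{n-3}]$ if and only if $[\Spec A'/G']$ is smooth at its closed point, i.e.\ if and only if $A'$ is regular — which it is not. This gives non-smoothness of the stack; for the space, since $(A')^{G'}$ is the completed local ring of $\modspace{n}{V}$ at the corresponding point, one argues as in Theorem~\ref{Kmod-ex}: the invariant ring inherits the reducibility (the $G'$-action is linear, so $\Spec (A')^{G'}$ has at least as many components as $\Spec A'$ has $G'$-orbits of components, and in fact one sees reducibility directly since the component structure of $\Def X$ is respected). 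The one point requiring a little care — and the main obstacle — is making the passage to the good moduli space rigorous: one must check that the reducibility (or at least non-smoothness) of $A'$ survives taking $G'$-invariants. This is handled exactly as in \S\ref{Kmodsp}: the $C_2$ factor permutes the two $21$-dimensional components of $\Def X$ but the $22$- and $20$-dimensional components are preserved, so $\Spec (A')^{G'}$ still has at least two components (in fact three, as in Theorem~\ref{Kmod-ex}), hence is not smooth at the closed point. Therefore both $\stack{n}{V}$ and $\modspace{n}{V}$ fail to be smooth at the point corresponding to $X \times \PP^{n-3}$.
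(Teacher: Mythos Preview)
Your proposal is correct and follows essentially the same route as the paper: use Proposition~\ref{prop:product_fano} (together with rigidity of $\PP^{n-3}$) to identify the hull of $\DefqG(X\times\PP^{n-3})$ with the hull $A$ of $\Def X$, then invoke the Luna slice description and the analysis of \S\ref{Kmodsp}. The only point the paper makes more explicit is the identification $\Aut(X\times\PP^{n-3}) = \Aut(X)\times\mathrm{PGL}_{n-2}$ with the second factor acting trivially on $A$, so that the completed local ring of $\modspace{n}{V}$ is literally $A^G$ as already computed in Theorem~\ref{Kmod-ex}; this tightens your last paragraph but does not change the argument.
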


\begin{proof}
Set $X' = X \times \PP^{n-3}$, which is a K-polystable toric Fano $n$-fold with anticanonical degree
    \[
    V = {n \choose 3} \times 12 \times (n-2)^{n-3}.
    \]
    Let $A$ be the base of the miniversal deformation of $X$ and let $G$ be the automorphism group of $X$.
    By Proposition~\ref{prop:product_fano} the base of the miniversal deformation of $X'$ is $A$.
    One can check that the automorphism group of $X'$ is $G \times \Aut(\PP^{n-3})$.
    This implies that the local structure of $\stack{n}{V}$ is $[\Spec A \ / \ G] \times \mathrm{B} \, \mathrm{PGL}_{n-2}$ and the local structure of $\modspace{n}{V}$ is $\Spec A^G$.
\end{proof}

\begin{remark}
Similar conclusions can be reached when considering products $X\times Z$ for any smooth K-polystable Fano $Z$. \end{remark}

\begin{remark}The product of the toric Fano variety of Theorem~\ref{thm:example} with the toric Fano variety of 
 \S\ref{sec:non_reduced_K_moduli} has dimension $6$ and anticanonical degree 
 \[ {6\choose3}\times 12\times \frac{44}{3}= 3520.\] Using Proposition~\ref{prop:product_fano}, one can show that there is a point of $\stack{6}{3520}$, at which the completion is non-reduced and reducible. 
\end{remark}

\section{K-stability results from explicit smoothings of toric Fano $3$-folds}
\label{sec:Ksstab}

We first prove the following general result. 

\begin{proposition}\label{K-ps}
Let $P$ be a Fano polytope in a lattice $N$ and let $X$ be the toric Fano variety associated to the face fan of $P$. Assume that $P$ is centrally symmetric, i.e.\ $P = -P$, and that no facet of the polar of $P$ has interior lattice points.

If $X$ has unobstructed $\QQ$-Gorenstein deformations, then the general $\QQ$-Gorenstein deformation of $X$ is K-polystable.
\end{proposition}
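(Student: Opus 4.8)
The strategy is to combine the K-polystability criterion for Gorenstein toric Fanos (Theorem~\ref{toricKps}) with the openness of K-semistability (Theorem~\ref{openness}) and the Luna slice description of K-moduli. First I would observe that, since $P$ is centrally symmetric, its polar $P^\circ$ is centrally symmetric as well, hence $0$ is the barycentre of $P^\circ$; by (the remark following) Theorem~\ref{toricKps} this shows that $X$ is K-polystable. Since $X$ is K-polystable, it defines a closed point $[X]$ of the K-moduli space $\modspace{n}{V}$, where $n = \dim X$ and $V = (-K_X)^n$.

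Next I would bring in the automorphism group. By Proposition~\ref{prop:automorphism_toric_fano}, the hypothesis that no facet of $P^\circ$ has interior lattice points gives $G := \Aut(X) = T_N \rtimes \Aut(P)$, which is reductive (as is guaranteed in any case by \cite{ABHLX}). Let $A$ be the hull of $\DefqG X$; by hypothesis the deformations are unobstructed, so $A$ is a power series ring $\CC\pow{t_1,\dots,t_r}$, and moreover the $G$-action on $A$ can be linearised because $\TTqG{2}{X} = 0$ forces $A$ to be the completed symmetric algebra on $\TTqG{1}{X}$ with its induced linear $G$-action. The Luna \'etale slice theorem for K-moduli (the cartesian square recalled in \S\ref{sec:KstabFa}) gives a formally \'etale map $\Spec A^G \to \modspace{n}{V}$ sending the closed point to $[X]$; in particular a general point of $\Spec A^G$ corresponds to a general $\QQ$-Gorenstein deformation of $X$, and its image in $\modspace{n}{V}(\CC)$ is a K-polystable Fano $n$-fold.

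The remaining point is to check that a \emph{general} deformation of $X$ actually \emph{is} represented by a point of $\modspace{n}{V}$, i.e.\ that it is K-semistable. Here I would use openness: consider the miniversal $\QQ$-Gorenstein family $\cX \to \Spec A$ (or a suitable algebraic approximation of it); since $X$ is K-polystable, hence K-semistable, Theorem~\ref{openness} says the K-semistable locus in $\Spec A$ is open and nonempty, so it contains a dense open subset, and in particular the general fibre is K-semistable. By the slice square, the general fibre, modulo its automorphisms, gives a point of $\modspace{n}{V}(\CC)$, which by the theorem quoted in \S\ref{sec:KstabFa} is precisely a K-polystable Fano variety. Hence the general $\QQ$-Gorenstein deformation of $X$ is K-polystable.

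\textbf{Main obstacle.} The delicate step is the passage from "K-semistable" to "K-polystable" for the general deformation: a priori openness only yields K-semistability, and one must argue that the general fibre is already K-polystable rather than merely lying in the same S-equivalence class as some other polystable variety. The clean way to see this is via the slice square: a $\QQ$-Gorenstein deformation of $X$ corresponds to a point of $\Spec A$, the good moduli space $\modspace{n}{V}$ only sees the $G$-orbit closure, and the $\CC$-points of $\modspace{n}{V}$ are exactly the K-polystable Fanos; so whatever K-semistable fibre we land on, the Fano it is S-equivalent to is a deformation of $X$ appearing over $\Spec A^G$, and genericity lets us take this to be the fibre itself. One should be slightly careful that $\DefqG$ is only pro-represented, so strictly speaking one works over an algebraic (or formal) neighbourhood and invokes openness there; this is routine given Theorem~\ref{openness} and the finite-type statements recalled in \S\ref{sec:KstabFa}.
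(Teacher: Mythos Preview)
Your setup is correct, but there is a genuine gap at precisely the point you flag as the ``main obstacle'': the passage from K-semistability of the general fibre to its K-polystability. Your argument there is circular. You say that the general fibre ``gives a point of $\modspace{n}{V}(\CC)$, which \ldots is precisely a K-polystable Fano variety''; but every K-semistable fibre maps to a $\CC$-point of $\modspace{n}{V}$, and that point represents the unique K-polystable variety in its S-equivalence class, not the fibre itself. In the Luna slice picture, the fibre $\cX_t$ is K-polystable exactly when the $G$-orbit of $t$ in $\Spec A$ is \emph{closed}, and nothing in your argument establishes this. For a cautionary example, take $G=\CC^*$ acting on $\AA^1$ with weight $1$: the general point has a non-closed orbit, $A^G=\CC$, and the only polystable point is the origin.

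The paper's proof supplies the missing step, and it uses the central symmetry of $P$ a second time (you use it only for the K-polystability of $X$). Since $-\id_N\in\Aut(P)\subset G$, the involution $-\id_M$ acts on the $T_N$-representation $\TTqG{1}{X}$ and interchanges the weight-$m$ and weight-$(-m)$ eigenspaces; hence the weight polytope $W\subset M_\RR$ is centrally symmetric and $0$ lies in its relative interior. By a standard GIT/Kempf--Ness argument (the paper cites \cite[Exercise~5.21c]{szekelyhidi}), this forces the general point of the linear representation $\TTqG{1}{X}$ to be $T_N$-polystable, i.e.\ to have closed $T_N$-orbit. Since $G$ is a finite extension of $T_N$ (here the hypothesis on interior lattice points of facets of $P^\circ$ is used via Proposition~\ref{prop:automorphism_toric_fano}), the general $G$-orbit is also closed. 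Combined with unobstructedness (so that $\DefqG X$ is identified with $\TTqG{1}{X}$), this shows the general fibre corresponds to a closed point of the stack and is therefore K-polystable.
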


\begin{proof}
Since $P$ is centrally symmetric, the polar $P^\circ$ is centrally symmetric and hence the barycentre of $P^\circ$ is the origin.
Therefore $X$ is K-polystable by Theorem~\ref{toricKps} and determines a closed point $[X]$ in $\stack{n}{V}$ and in $\modspace{n}{V}$ for suitable $n$ and $V$.

Let $\cX_t$ denote a general fibre in the miniversal $\QQ$-Gorenstein deformation of $X$.
By openness of K-semistability \cite{BLX, xu_minimizing}, we immediately deduce that $\cX_t$ is K-semistable.
However, the local description of K-moduli allows us to prove that $\cX_t$ is actually K-polystable as follows.

Consider the dual lattice $M = \Hom_\ZZ(N,\ZZ)$ and  the torus $T_N = \Spec \CC[M]$ which acts on $X$. Therefore $\TTqG{1}{X}$ is a linear $T_N$-representation, in particular $\TTqG{1}{X}$ is the direct sum of $1$-dimensional linear representations of $T_N$, i.e.\ characters of $T_N$.
Let $W$ denote the weight polytope of this linear action, i.e.\ $W$ is the convex hull in $M \otimes_\ZZ \RR$ of the points in $M$ corresponding to the characters of $T_N$ which appear in $\TTqG{1}{X}$.
Since $P$ is centrally symmetric, also $W$ is centrally symmetric; hence the origin lies in the relative interior of $W$.
By \cite[Exercise~5.21c]{szekelyhidi} the general point in the affine space $\TTqG{1}{X}$ is GIT-polystable with respect to the $T_N$-action.
By Proposition~\ref{prop:automorphism_toric_fano} the automorphism group of $X$ is a finite extension of $T_N$, therefore the general point in the affine space $\TTqG{1}{X}$ belongs to a closed orbit with respect to the $\Aut(X)$-action.

Since $\QQ$-Gorenstein deformations of $X$ are unobstructed, the germ $\DefqG X$ coincides with (a neighbourhood of the origin in) its tangent space $\TTqG{1}{X}$. By the Luna \'etale slice theorem for algebraic stacks \cite{luna_etale_slice_stacks} (see \S\ref{sec:KstabFa}) we deduce that $\cX_t$ gives rise to a closed point in $\stack{n}{V}$, so that $\cX_t$ is K-polystable.
\end{proof}

\subsection{K-polystability of \morimukai{4}{3}}
\label{sec:MM4-3}

Let $P$ be the lattice polytope in $N = \ZZ^3$ with vertices:
\begin{equation*}
    \pm \begin{pmatrix}
        1 \\ 0 \\ 0 
    \end{pmatrix}\!, \
    \pm \begin{pmatrix}
        0 \\ 1 \\ 0 
    \end{pmatrix}\!, \
    \pm \begin{pmatrix}
        0 \\ 0 \\ 1
    \end{pmatrix}\!, \
    \pm \begin{pmatrix}
        1 \\ 1 \\ 0
    \end{pmatrix}\!, \
    \pm \begin{pmatrix}
        1 \\ 0 \\ 1
    \end{pmatrix}\!.
\end{equation*}
Then $P$ has exactly $12$ facets: $4$ of them are standard squares (i.e.\ quadrilaterals affine-equivalent to the convex hull of $0$, $e_1$, $e_2$, $e_1+ e_2$ in $\ZZ^2$) and $8$ of them are standard triangles (i.e.\ triangles affine-equivalent to the convex hull of $0$, $e_1$, $e_2$ in $\ZZ^2$). One can see that $P$ is a reflexive polytope.

Let $X$ be the Gorenstein toric Fano $3$-fold associated to the face fan of $P$. 
Since $P$ is centrally symmetric, 
$P^\circ$ also is centrally symmetric and the barycentre of $P^\circ$ is zero so that $X$ is K-polystable by Theorem~\ref{toricKps}.

From the description of the facets of $P$, we conclude that $\Sing X$ consists of $4$ ordinary double points. Therefore, $X$ is unobstructed and smoothable by \cite{namikawa_fano}. We now prove that $X$ deforms to a member of the family \morimukai{4}{3}. This will imply that the general member of \morimukai{4}{3} is K-semistable by Theorem~\ref{openness}.

As the normalised volume of $P^\circ$ is $28$, $(-K_X)^3 = 28$. By Proposition~\ref{prop:jordan}, $b_2(X) = 4$. Since $X$ has ordinary double points, each Milnor fibre $M$ associated to the local smoothing of a singular point of $X$ induced by a smoothing of $X$ is homotopy equivalent to $S^3$, so $b_1(M)=b_2(M)=0$. Via the long exact sequence \eqref{les}, this implies that the second Betti number is constant in the family. In other words, the Picard rank of the general fibre of the smoothing of $X$ is $4$.
By inspection of the list of smooth Fano $3$-folds, one sees that \morimukai{4}{3} is the only family of smooth Fano $3$-folds with Picard rank $4$ and anticanonical degree $28$. This implies that $X$ deforms to \morimukai{4}{3}.

By Proposition~\ref{K-ps}, the general member of \morimukai{4}{3} is K-polystable; since the automorphism group of the general member of \morimukai{4}{3} is infinite \cite{Prz_Che_Shr}, it is not K-stable by \cite[Corollary 1.3]{blum_xu_uniqueness}.

\subsection{K-stability of \morimukai{2}{10}} \label{sec:MM2-10} In this section, we construct a K-polystable Gorenstein toric Fano variety and show that it has a deformation to \morimukai{2}{10}.

Let $P$ be the convex hull of the following points in the lattice $N = \ZZ^3$:
\begin{equation*}
\begin{pmatrix}
    0 \\ 0 \\  1
\end{pmatrix},
\begin{pmatrix}
    1 \\ 0 \\  1
\end{pmatrix},
\begin{pmatrix}
    1 \\ 1 \\  1
\end{pmatrix},
\begin{pmatrix}
    0 \\ 1 \\  1
\end{pmatrix},
\pm
\begin{pmatrix}
     1 \\ 1 \\  0
\end{pmatrix},
\pm
\begin{pmatrix}
     1 \\ - 1 \\  0
\end{pmatrix},
\begin{pmatrix}
    0 \\ 0 \\  -1
\end{pmatrix},
\begin{pmatrix}
    -1 \\ 0 \\  -1
\end{pmatrix},
\begin{pmatrix}
    -1 \\ -1 \\  -1
\end{pmatrix},
\begin{pmatrix}
    0 \\ -1 \\  -1
\end{pmatrix}.
\end{equation*}
These $12$ points are exactly the vertices of $P$. The polytope $P$ has $20$ edges and $10$ facets, which are all quadrilateral. One can see that $P$ is a bifrustum, i.e.\ the union of two frusta joined at their common bases.
The polytope $P$ is depicted in Figure~\ref{fig:due_tronchi}.

\begin{figure}
    \centering
    \includegraphics[width=5cm, scale = 0.75]{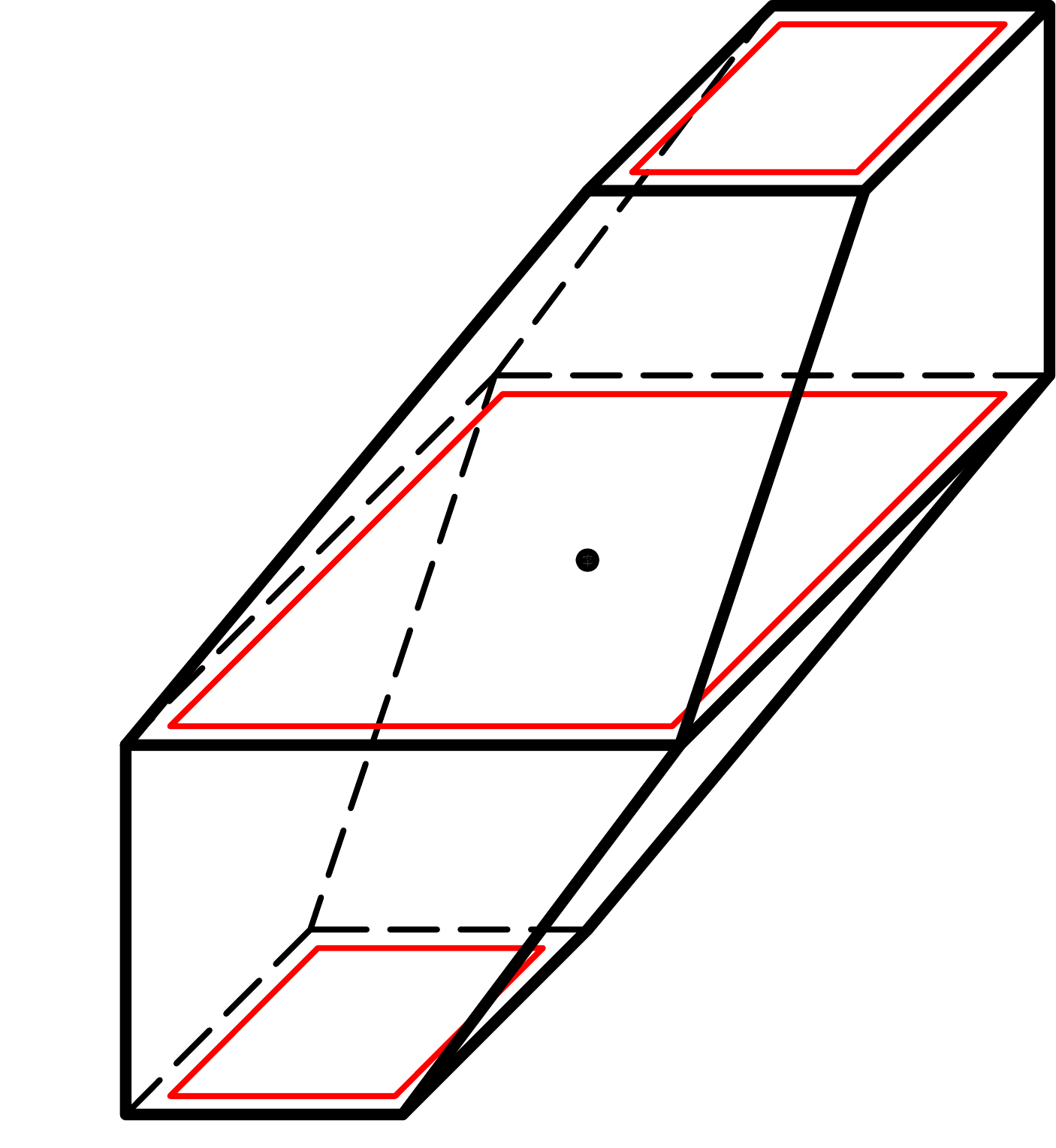}
    \caption{The polytope $P$ considered in \S\ref{sec:MM2-10}}
    \label{fig:due_tronchi}
\end{figure}

The toric variety $X$ associated to the face fan of $P$ is Fano with Gorenstein canonical hypersurface singularities.

We want to prove that $X$ admits a closed embedding into a smooth toric variety as a complete intersection.
We follow the Laurent inversion method developed by Coates--Kasprzyk--Prince \cite{laurent_inversion, from_cracked_to_fano, cracked_fano_toric_ci}.
Consider the decomposition
\[
N = \overline{N} \oplus N_U
\]
where $\overline{N} = \ZZ e_1 \oplus \ZZ e_2$ and $N_U = \ZZ e_3$.
Let $\overline{M}$ be the dual lattice of $\overline{N}$.
Let $Z$ be the $T_{\overline{M}}$-toric variety associated to the complete fan in the lattice $\overline{M}$ with rays generated by $e_1^*, e_2^*, -e_1^*, -e_2^* \in \overline{M}$.
It is clear that $Z$ is isomorphic to $\PP^1 \times \PP^1$.
Let $\Div_{T_{\overline{M}}}(Z)$ be the rank-$4$ lattice made up of the torus invariant divisors on $Z$: a basis of $\Div_{T_{\overline{M}}}(Z)$ is given by $E_1, E_2, E_3, E_4$, the torus invariant prime divisors on $Z$ associated to the rays generated by $e_1^*, e_2^*, -e_1^*, -e_2^*$ respectively.
The divisor sequence \cite[Theorem~4.1.3]{cox_toric_varieties} 
\begin{equation*}
0 \longrightarrow \overline{N} \overset{\rho^\star}\longrightarrow
\Div_{T_{\overline{M}}}(Z)
\longrightarrow \Pic(Z)
\longrightarrow 0
\end{equation*}
of $Z$ becomes
\begin{equation*}
0 \longrightarrow \overline{N} = \ZZ e_1 \oplus \ZZ e_2
\xrightarrow{\begin{pmatrix}
1 & 0 \\
0 & 1 \\
-1 & 0 \\
0 & -1
    \end{pmatrix}}
\ZZ^4
\xrightarrow{\begin{pmatrix}
    1 & 0 & 1 & 0 \\
    0 & 1 & 0 & 1
    \end{pmatrix}}
\ZZ^2 \longrightarrow 0.
\end{equation*}
We consider the following ample torus invariant divisors on $Z$
\begin{align*}
D_0 &= E_3 + E_4 \in \Div_{T_{\overline{M}}}(Z), \\
D_1 &= E_1 + E_2 \in \Div_{T_{\overline{M}}}(Z), \\
D_x &= E_1 + E_2 + E_3 + E_4 \in \Div_{T_{\overline{M}}}(Z)
\end{align*}
and their corresponding moment polytopes in $\overline{N}$
\begin{align*}
P_{D_0} &= \conv{0, e_1, e_2, e_1 + e_2}, \\
P_{D_1} &= \conv{0, -e_1, -e_2, -e_1 - e_2}, \\
P_{D_x} &=   \conv{e_1+e_2, e_1- e_2, -e_1+e_2, -e_1-e_2}.
\end{align*}
Now consider the following elements of the lattice $N_U = \ZZ e_3$:
\begin{equation*}
\chi_0 = e_3, \qquad \chi_1 = -e_3, \qquad \chi_x = 0.
\end{equation*}
The polytopes $ P_{D_0} + \chi_0$, $ P_{D_1} + \chi_1$, $ P_{D_x} + \chi_x$ in $N = \overline{N} \oplus N_U$ are depicted in red in Figure~\ref{fig:due_tronchi}. Clearly the polytope $P$ is the convex hull of these three polytopes.
By \cite[Definition~3.1]{laurent_inversion} the set
\begin{equation*}
S = \{ (D_0, \chi_0), (D_1, \chi_1), (D_x, \chi_x)  \}
\end{equation*}
is a scaffolding on the Fano polytope $P$.

Set $\widetilde{N} := \Div_{T_{\overline{M}}}(Z) \oplus N_U$. 
Let $\widetilde{M}$ be the dual lattice of $\widetilde{N}$.
Let $\langle \cdot, \cdot \rangle \colon \widetilde{M} \times \widetilde{N} \to \ZZ$ be the duality pairing.
Following \cite[Definition~A.1]{laurent_inversion} we consider the polytope $Q_S \subseteq  \widetilde{M}_\RR$ defined by the following inequalities:
\begin{align*}
\langle \ \cdot \ , -D_0 + \chi_0 \rangle &\geq -1, \\
\langle \ \cdot \ , -D_1 + \chi_1 \rangle &\geq -1, \\
\langle \ \cdot \ , -D_x + \chi_x \rangle &\geq -1, \\
\langle \ \cdot \ , E_1 \rangle &\geq 0, \\
\langle \ \cdot \ , E_2 \rangle &\geq 0, \\
\langle \ \cdot \ , E_3 \rangle &\geq 0, \\
\langle \ \cdot \ , E_4 \rangle &\geq 0.
\end{align*}
Let $\Sigma_S$ be the normal fan of $Q_S$, then $\Sigma_S$ is a smooth complete fan in $\widetilde{N} = \Div_{T_{\overline{M}}}(Z) \oplus N_U$ and its rays are generated by the vectors
\begin{align*}
    s_0 &= -D_0 + \chi_0 = -E_3 - E_4 + e_3, \\
    s_1 &= -D_1 + \chi_1 = -E_1 - E_2 - e_3, \\
    x &= -D_x + \chi_x = -E_1 - E_2 - E_3 -E_4, \\
    x_2 &= E_1, \\
    x_3 &= E_2, \\
    x_4 &= E_3, \\
    x_5 &= E_4.
    \end{align*}

Let $Y$ be the $T_{\widetilde{N}}$-toric variety associated to the fan $\Sigma_S$. Thus $Y$ is a smooth $5$-fold with Cox coordinates $s_0$, $s_1$, $x$, $x_2$, $x_3$, $x_4$, $x_5$.
In the basis of $\widetilde{N}$ given by $E_1$, $E_2$, $E_3$, $E_4$, $e_3$,
the ray map $\ZZ^7 \to \widetilde{N}$ is given by the matrix
 \begin{equation*}
 \begin{pmatrix}
 0 & -1 & -1 & 1 & 0 & 0 & 0 \\
 0 & -1 & -1 & 0 & 1 & 0 & 0 \\
 -1 & 0 & -1 & 0 & 0 & 1 & 0 \\
 -1 & 0 & -1 & 0 & 0 & 0 & 1 \\
 1 & -1 & 0  & 0 & 0 & 0 & 0 
 \end{pmatrix}.
 \end{equation*}
After computing the kernel of this matrix, one finds that the divisor map $\ZZ^7 \to \Pic(Y) \simeq \ZZ^2$ is given by the following matrix.
\begin{equation*}
\begin{array}{ccccccc|c}
s_0 & s_1 & x & x_2 & x_3 & x_4 & x_5 \\
\hline
1 & 1 & -1 & 0 & 0 &  0 & 0 & L_1 \\
0 & 0 & 1 & 1 & 1 & 1 & 1 & L_2
\end{array}
\end{equation*}
This matrix gives also the weights of an action of $\GG_\rmm^2$ on $\AA^7$: $Y$ is the GIT quotient of this action with respect to the stability condition given by the irrelevant ideal $(s_0, s_1)\cdot (x, x_2, x_3, x_4, x_5)$. Here $L_1$ and $L_2$ denote the elements of the chosen basis of $\Pic(Y)$.

The morphism $Y \to \PP^1$ given by $[s_0 : s_1 : x : x_2 : x_3 : x_4 : x_5] \mapsto [s_0 : s_1]$ shows that $Y$ is isomorphic to $\PP(\cO_{\PP^1}(-1) \oplus \cO_{\PP^1}^{\oplus 4})$.
The morphism $Y \to \PP^5$ given by
$[s_0 : s_1 : x : x_2 : x_3 : x_4 : x_5] \mapsto [s_0 x : s_1 x : x_2 : x_3 : x_4 : x_5]$
shows that $Y$ is isomorphic to the blowup of $\PP^5$ with centre the $3$-plane $(x_0 = x_1 = 0)$.

We now consider the injective linear map
\begin{equation*}
\theta := \rho^\star \oplus \mathrm{id}_{N_U} \colon
N = \overline{N} \oplus N_U \longrightarrow
\widetilde{N} = \Div_{T_{\overline{M}}}(Z) \oplus N_U.
\end{equation*}
By \cite[Theorem~5.5]{laurent_inversion} $\theta$ induces a toric morphism $X \to Y$ which is a closed embedding.
We want to understand the ideal of this closed embedding in the Cox ring of $Y$ using the map $\theta$.

We follow \cite[Remark~2.6]{from_cracked_to_fano}.
We see that $\theta(N) = (h_1)^\perp \cap (h_2)^\perp$, where $(h_1)^\perp$ (resp.\ $(h_2)^\perp$) is the hyperplane in $\widetilde{N}_\RR$ defined by the vanishing of $h_1 = E_1^* + E_3^* \in \widetilde{M}$ (resp.\ $h_2 = E_2^* + E_4^* \in \widetilde{M}$). From $\langle h_1, -D_0 + \chi_0 \rangle = \langle h_1, -D_1 + \chi_1 \rangle = -1$, $\langle h_1, -D_x + \chi_x \rangle = -2$, $\langle h_1,E_1 \rangle = \langle h_1,E_3 \rangle =1$, $\langle h_1,E_2 \rangle = \langle h_1,E_4 \rangle =0$, we obtain that the polynomial $x_2 x_4 - s_0 s_1 x^2$ lies in the homogeneous ideal of $X \into Y$. In an analogous way, from $h_2$ we obtain that the polynomial $x_3 x_5 - s_0 s_1 x^2$ lies in the homogeneous ideal of $X \into Y$. One can see that these two polynomials generate this ideal.

Therefore we have shown that $X$ is a complete intersection in $Y$: more specifically $X$ is the intersection of two particular divisors in the linear system $\vert 2 L_2 \vert$. This implies that $X$ deforms to the intersection of two general divisors in $\vert 2 L_2 \vert$, that is to a member of the family \morimukai{2}{10}.

\begin{lemma} \label{lem:big_nef}
$L_1 \vert_X$ is nef and $L_2 \vert_X$ is big and nef.
\end{lemma}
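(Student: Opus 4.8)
The plan is to deduce everything from the two morphisms out of $Y$ already exhibited above. First I would record that $\pi\colon Y\to\PP^1$, $[s_0:s_1:x:x_2:x_3:x_4:x_5]\mapsto[s_0:s_1]$, exhibits $Y$ as the $\PP^4$-bundle $\PP(\cO_{\PP^1}(-1)\oplus\cO_{\PP^1}^{\oplus4})$, while $\phi\colon Y\to\PP^5$, $[s_0:s_1:x:x_2:x_3:x_4:x_5]\mapsto[s_0x:s_1x:x_2:x_3:x_4:x_5]$, exhibits $Y$ as the blowup of $\PP^5$ along the $3$-plane $(x_0=x_1=0)$. Reading off the $\Pic(Y)$-grading from the weight matrix, the $\PP^1$-coordinates $s_0,s_1$ both have class $L_1$, and the six forms $s_0x,\,s_1x,\,x_2,\,x_3,\,x_4,\,x_5$ defining $\phi$ all have class $L_2$; since moreover neither tuple has a common zero on $Y$ (by the irrelevant ideal), the classes $L_1$ and $L_2$ are base-point-free with $L_1=\pi^{*}\cO_{\PP^1}(1)$ and $L_2=\phi^{*}\cO_{\PP^5}(1)$. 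Hence $L_1$ and $L_2$ are nef on $Y$, and $L_2$ is big on $Y$ because $\phi$ is birational.

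Next, as $X\subset Y$ is a closed subvariety, the restrictions $L_1|_X$ and $L_2|_X$ of the nef classes $L_1,L_2$ are again nef, which proves the nef assertions in the lemma. It then remains only to prove that $L_2|_X$ is big. Since $L_2|_X$ is nef and $\dim X=3$, it is enough to check that $(L_2|_X)^3>0$. I would compute this by intersection theory on $Y$: as $X$ is the complete intersection of two divisors of class $2L_2$, its class in the cohomology ring of $Y$ is $4L_2^2$, and therefore
\[
(L_2|_X)^3=\int_Y L_2^3\cdot 4L_2^2=4\int_Y L_2^5=4\int_{\PP^5}\cO_{\PP^5}(1)^5=4>0,
\]
by the projection formula and the birationality of $\phi$. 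A geometric variant avoiding the intersection number: one checks that $X$ is not contained in the exceptional divisor $E=\{x=0\}$ of $\phi$ — indeed $X\cap E=\{x=x_2x_4=x_3x_5=0\}$ is $2$-dimensional — so that $\phi|_X\colon X\to\PP^5$ is birational onto its $3$-dimensional image and $L_2|_X=(\phi|_X)^{*}\cO_{\PP^5}(1)$ is big.

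The only genuinely delicate points are the normalisations: matching $L_1,L_2$ with $\pi^{*}\cO_{\PP^1}(1)$ and $\phi^{*}\cO_{\PP^5}(1)$ (a check on the weight matrix and the irrelevant ideal), and the value $L_2^5=1$ on $Y$ — equivalently, that $X$ meets the exceptional locus of $\phi$ in the expected dimension. Everything else is formal: nefness follows from restriction of nef divisors, and bigness from the displayed positivity of the top self-intersection number.
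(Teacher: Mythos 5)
Your proof is correct, but it takes a genuinely different route to bigness than the paper. You argue numerically: since $X$ is the complete intersection of two divisors of class $2L_2$ in $Y$, its class in $H^4(Y,\QQ)$ is $4L_2^2$, so $(L_2|_X)^3 = 4L_2^5 = 4 > 0$ (the value $L_2^5=1$ following from birationality of $\phi$, or equivalently from the Stanley--Reisner relation $(L_2-L_1)L_2^4=0$ together with $L_1^2=0$ and $L_1L_2^4=1$); bigness of the nef class $L_2|_X$ then follows from positivity of its top self-intersection. Your geometric variant — observing $X\not\subset E=\{x=0\}$, so $\phi|_X$ is birational onto its $3$-dimensional image and $L_2|_X=(\phi|_X)^*\cO_{\PP^5}(1)$ is big — is an equally valid packaging of the same idea. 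The paper instead argues cone-theoretically: it records that the effective cone of $Y$ is generated by $L_1$ and $-L_1+L_2$, exhibits non-vanishing sections $s_0\in\rH^0(Y,L_1)$ and $x\in\rH^0(Y,-L_1+L_2)$ that restrict nontrivially to $X$, and then uses $\rho(X)=2$ to conclude that $L_2|_X = L_1|_X + (-L_1+L_2)|_X$ lies in the interior of the effective cone of $X$. Your approach avoids the (implicit in the paper) appeal to $\rho(X)=2$ and instead pays with a small intersection-theory computation on $Y$; each is about the same length and both are sound.
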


\begin{proof}
The smooth toric variety $Y$ has Picard rank $2$: the nef cone of $Y$ is generated by $L_1$ and $L_2$;
the effective cone of $Y$ is generated by $L_1$ and $-L_1 + L_2$.
Since $L_1$ and $L_2$ are nef on $Y$, we immediately get that $L_1 \vert_X$ and $L_2 \vert_X$ are nef on $X$.

The section $s_0 \in \rH^0(Y, L_1)$ does not vanish identically on $X$, therefore $L_1 \vert_X$ is effective.
The section $x \in \rH^0(Y, -L_1 + L_2)$ does not vanish identically on $X$, therefore $(-L_1 + L_2) \vert_X$ is effective.
Since the Picard rank of $X$ is $2$, we have that $L_2 \vert_X = L_1 \vert_X + (-L_1 + L_2) \vert_X$ is in the interior of the effective cone, so it is big.
\end{proof}

\begin{lemma}
$X$ has unobstructed deformations.
\end{lemma}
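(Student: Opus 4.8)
Since $X$ is Gorenstein, its $\QQ$-Gorenstein deformations coincide with its ordinary deformations, and since $X$ has hypersurface singularities it is lci, so $\cT^2_X=0$. The plan is to use the realisation of $X$ obtained above as a complete intersection in the smooth toric $5$-fold $Y$: $X$ is cut out in $Y$ by a regular sequence of two sections of $\cO_Y(2L_2)$, so its normal sheaf is $N_{X/Y}\cong\cO_X(2L_2)^{\oplus 2}$, which is locally free. For a regular embedding of $X$ in a smooth variety $Y$ there is a long exact sequence
\[
\cdots\longrightarrow\rH^1(X,N_{X/Y})\longrightarrow\TT^2_X\longrightarrow\rH^2(X,T_Y|_X)\longrightarrow\cdots,
\]
so it suffices to prove the two vanishings $\rH^1(X,N_{X/Y})=0$ and $\rH^2(X,T_Y|_X)=0$; these give $\TT^2_X=0$, i.e.\ $X$ has unobstructed deformations. (One could instead argue via the local-to-global $\Ext$ spectral sequence: $\cT^2_X=0$ together with the toric Fano vanishing $\rH^1(X,\cT^0_X)=\rH^2(X,\cT^0_X)=0$ \cite{totaro,petracci_survey} reduce the problem to $\rH^1(X,\cT^1_X)=0$, which follows from the computations below; but the route through $N_{X/Y}$ is the most direct.)

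Both vanishings come from the Koszul resolution of $\cO_X$ on $Y$,
\[
0\longrightarrow\cO_Y(-4L_2)\longrightarrow\cO_Y(-2L_2)^{\oplus 2}\longrightarrow\cO_Y\longrightarrow\cO_X\longrightarrow 0.
\]
Tensoring with $\cO_Y(2L_2)$ and chasing cohomology reduces $\rH^1(X,\cO_X(2L_2))=0$ to the vanishing of $\rH^{\geq 1}(Y,\cO_Y)$, of $\rH^{\geq 1}(Y,\cO_Y(2L_2))$, and of $\rH^\bullet(Y,\cO_Y(-2L_2))$. Tensoring instead with the locally free sheaf $T_Y$ reduces $\rH^2(X,T_Y|_X)=0$ to the vanishing of $\rH^{\geq 1}\bigl(Y,T_Y(kL_2)\bigr)$ for $k\in\{0,-2,-4\}$; and, feeding in the generalised Euler sequence $0\to\cO_Y^{\oplus 5}\to\bigoplus_\rho\cO_Y(D_\rho)\to T_Y\to 0$ over the seven rays of $\Sigma_S$ \cite[Theorem~8.1.6]{cox_toric_varieties}, this last vanishing follows from that of $\rH^{\geq 1}(Y,\cO_Y(D_\rho+kL_2))$ and of $\rH^{\geq 2}(Y,\cO_Y(kL_2))$ for the same $k$.

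It then remains to check these line-bundle vanishings on $Y$, which is the routine bulk of the argument. By the description of $Y$ and Lemma~\ref{lem:big_nef}, $Y$ is the $\PP^4$-bundle $\pi\colon Y=\PP(\cO_{\PP^1}(-1)\oplus\cO_{\PP^1}^{\oplus 4})\to\PP^1$, with $L_1=\pi^\ast\cO_{\PP^1}(1)$ and $L_2$ the relative hyperplane class, and the divisor matrix shows that the seven classes $D_\rho$ are $L_1$ (twice), $-L_1+L_2$, and $L_2$ (four times). Hence each class appearing above is of the form $aL_1+bL_2$ with $b\in\{-4,-3,-2,-1,0,1,2\}$. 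When $b\in\{-4,\dots,-1\}$ the class restricts to $\cO_{\PP^4}(b)$ on the fibres of $\pi$, so $R\pi_\ast=0$ and the cohomology vanishes in all degrees; when $b\geq 0$ one has $R^{>0}\pi_\ast=0$, and in each of the finitely many cases arising $\pi_\ast$ is a direct sum of line bundles $\cO_{\PP^1}(j)$ with $j\geq -1$, so all higher cohomology vanishes on $\PP^1$ and hence on $Y$. This yields every required vanishing, and therefore $\TT^2_X=0$.

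I expect the only genuinely delicate points to be the correct formulation of the deformation-theory comparison sequence for the singular (but lci) $X$, and the bookkeeping of exactly which twists of the Euler sequence of $T_Y$ enter; once the $\PP^4$-bundle structure of $Y$ is exploited, the rest is formal.
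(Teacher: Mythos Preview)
Your strategy coincides with the paper's: both use the conormal sequence of the regular embedding $X\hookrightarrow Y$ to reduce $\TT^2_X=0$ to the two vanishings $\rH^1(X,N_{X/Y})=0$ and $\rH^2(X,T_Y\vert_X)=0$. The difference lies only in how these vanishings are established. The paper works directly on $X$: it invokes Lemma~\ref{lem:big_nef} together with Demazure vanishing (for nef line bundles on the toric variety $X$) and Kawamata--Viehweg vanishing (for $(-L_1+L_2)\vert_X=K_X+2L_2\vert_X$), and then restricts the Euler sequence of $Y$ to $X$. Your route instead lifts everything to $Y$ via the Koszul resolution and computes the needed line-bundle cohomology on $Y$ using the $\PP^4$-bundle structure over $\PP^1$; this avoids appealing to Lemma~\ref{lem:big_nef} and to the big vanishing theorems, at the cost of a longer case check. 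Both arguments are valid. One small slip: in the generalised Euler sequence of $Y$ the kernel is $\cO_Y^{\oplus 2}$ (the Picard rank of $Y$), not $\cO_Y^{\oplus 5}$; this is harmless for your argument since you only use $\rH^{\geq 2}(Y,\cO_Y(kL_2))=0$.
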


\begin{proof}
By Lemma~\ref{lem:big_nef} and Demazure vanishing \cite[Theorem~9.2.3]{cox_toric_varieties}   $\cO_X$, $L_1 \vert_X$, $L_2 \vert_X$, $2L_2 \vert_X$ do not have higher cohomology.

It is clear that $-K_Y = L_1 + 5L_2$. By adjunction $K_X = -L_1 \vert_X - L_2 \vert_X$.
By Lemma~\ref{lem:big_nef}  and Kawamata--Viehweg vanishing $(-L_1 + L_2) \vert_X = K_X + 2 L_2 \vert_X$ does not have higher cohomology.

Consider the normal bundle $N_{X/Y}$ of $X$ in $Y$; one has $N_{X/Y} = \left( L_2^{\otimes 2} \vert_X \right)^{\oplus 2}$. We deduce that the higher cohomology of $N_{X/Y}$ vanishes. In particular we get the vanishing of $ \rH^1(N_{X/Y}) = \Ext^1(N_{X/Y}^\vee, \cO_X)$.

Consider the tangent bundle $T_Y$ of $Y$. By restricting the Euler sequence of $Y$ \cite[Theorem~8.1.6]{cox_toric_varieties} to $X$ we get the short exact sequence
\[
0 \to \cO_X^{\oplus 2} \to \left( L_1 \vert_X \right)^{\oplus 2} \oplus \left( L_1^\vee \otimes L_2 \right) \vert_X
\oplus \left( L_2 \vert_X \right)^{\oplus 4} \to T_Y \vert_X \to 0.
\]
We deduce that the higher cohomology of $T_Y \vert_X$ vanishes.
In particular we obtain the vanishing of $\rH^2(X, T_Y \vert_X) = \Ext^2(\Omega_Y \vert_X, \cO_X)$.

From the conormal sequence
\[
0 \to N_{X/Y}^\vee \to \Omega_Y \vert_X \to \Omega_X \to 0
\]
and the vanishing of $\Ext^1(N_{X/Y}^\vee, \cO_X)$ and of $\Ext^2(\Omega_Y \vert_X, \cO_X)$ we deduce the vanishing $\TT^2_X = \Ext^2(\Omega_X, \cO_X)$.
\end{proof}

By Proposition~\ref{K-ps} the general member of the family of \morimukai{2}{10} is K-polystable.
Since it has finite automorphism group \cite{Prz_Che_Shr},  it is K-stable.

We collect the results of this section in the next theorem: 
\begin{theorem}
There exist K-polystable toric Fano $3$-folds with smoothings to members of the deformation families \morimukai{4}{3} and \morimukai{2}{10}. The general member of the deformation family \morimukai{4}{3} is strictly K-polystable and the general member of the deformation family \morimukai{2}{10} is K-stable. 
\end{theorem}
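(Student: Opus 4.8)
The plan is to assemble the constructions of \S\ref{sec:MM4-3} and \S\ref{sec:MM2-10}: for each of the two families, exhibit an explicit K-polystable Gorenstein toric Fano $3$-fold degenerating to it, check that its deformations are unobstructed, apply Proposition~\ref{K-ps} to promote K-semistability of a general member to K-polystability, and finally distinguish ``strictly K-polystable'' from ``K-stable'' by inspecting the automorphism group of the general member.

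For \morimukai{4}{3}, I would take the centrally symmetric reflexive polytope $P \subset \ZZ^3$ with the stated ten vertices and let $X$ be the toric Fano associated to its face fan. Central symmetry of $P$ makes the barycentre of $P^\circ$ the origin, so $X$ is K-polystable by Theorem~\ref{toricKps}; reading off the facets shows $\Sing X$ consists of four ordinary double points, so $X$ is unobstructed and smoothable by \cite{namikawa_fano}. Because the vanishing cohomology of an ODP is concentrated in degree $3$, the long exact sequence of a smoothing shows $b_2$ is constant, so by Proposition~\ref{prop:jordan} the smoothing has Picard rank $4$; together with $(-K_X)^3 = 28$ and the classification of Fano $3$-folds this forces the general fibre into \morimukai{4}{3}. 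Proposition~\ref{K-ps} then yields that a general member of \morimukai{4}{3} is K-polystable, and since its automorphism group is infinite \cite{Prz_Che_Shr}, \cite[Corollary~1.3]{blum_xu_uniqueness} shows it is not K-stable, i.e.\ it is strictly K-polystable.

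For \morimukai{2}{10}, the degeneration is produced by Laurent inversion: I would fix the bifrustum $P$, write down the scaffolding $S = \{(D_0,\chi_0),(D_1,\chi_1),(D_x,\chi_x)\}$, build the smooth ambient toric $5$-fold $Y \simeq \PP(\cO_{\PP^1}(-1) \oplus \cO_{\PP^1}^{\oplus 4})$, and track the lattice embedding $\theta\colon N \hookrightarrow \widetilde{N}$ to realise $X$ as the complete intersection of two divisors in $\lvert 2L_2 \rvert$ on $Y$; hence $X$ deforms to a general such complete intersection, which is a member of \morimukai{2}{10}. Unobstructedness ($\TT^2_X = 0$) follows by feeding a chain of Demazure- and Kawamata--Viehweg-vanishing statements on $Y\vert_X$ --- for $\cO_X$, $L_1\vert_X$, $L_2\vert_X$, $2L_2\vert_X$, the normal bundle, and $T_Y\vert_X$ via the restricted Euler sequence --- into the conormal sequence of $X \subset Y$. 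Central symmetry of $P$ again gives K-polystability of $X$, so Proposition~\ref{K-ps} makes a general \morimukai{2}{10} K-polystable, and since its automorphism group is finite \cite{Prz_Che_Shr} it is in fact K-stable.

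Since the statement is a summary, there is no genuinely new difficulty here: all the substance lies in the two subsections. The main obstacles there are, first, carrying out the Laurent-inversion calculation correctly --- in particular identifying the two quadric generators $x_2 x_4 - s_0 s_1 x^2$ and $x_3 x_5 - s_0 s_1 x^2$ of the ideal of $X \hookrightarrow Y$ from the perpendicular vectors $h_1 = E_1^* + E_3^*$ and $h_2 = E_2^* + E_4^*$ --- and, second, organising the vanishing computations so that $\TT^2_X = 0$ drops out of the conormal sequence. The topological identification of the \morimukai{4}{3} degeneration is comparatively routine once Proposition~\ref{prop:jordan} and the Fano $3$-fold classification are in hand.
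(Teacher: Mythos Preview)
Your proposal is correct and follows exactly the paper's approach: the theorem is a summary of \S\ref{sec:MM4-3} and \S\ref{sec:MM2-10}, and you have accurately reproduced the constructions (the centrally symmetric reflexive polytope with four ordinary double points for \morimukai{4}{3}; the bifrustum plus Laurent-inversion embedding into $Y \simeq \PP(\cO_{\PP^1}(-1)\oplus\cO_{\PP^1}^{\oplus 4})$ for \morimukai{2}{10}), the unobstructedness arguments, the appeal to Proposition~\ref{K-ps}, and the final dichotomy via finiteness of the automorphism group. There is nothing to add.
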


\bibliography{Biblio_Kstable}
\end{document}